\documentclass[amssymb,verbatim,10pt]{amsart}
\usepackage[fontsize = 10bp]{fontsize}

\usepackage{extarrows}

\usepackage{amssymb,latexsym, amsmath, amscd, array, graphicx,pb-diagram}
\usepackage{hyperref}
\usepackage{color}

\usepackage{mathtools}
\usepackage{stackengine}
\setstackEOL{\\}
\usepackage{tikz-cd}

\usetikzlibrary{decorations.markings}
\tikzset{negated/.style={
        decoration={markings,
            mark= at position 0.75 with {
                \node[transform shape] (tempnode) {$\backslash$};
            }
        },
        postaction={decorate}
    }
}

\makeatletter
\def\@tocline#1#2#3#4#5#6#7{\relax
  \ifnum #1>\c@tocdepth 
  \else
    \par \addpenalty\@secpenalty\addvspace{#2}%
    \begingroup \hyphenpenalty\@M
    \@ifempty{#4}{%
      \@tempdima\csname r@tocindent\number#1\endcsname\relax
    }{%
      \@tempdima#4\relax
    }%
    \parindent\z@ \leftskip#3\relax \advance\leftskip\@tempdima\relax
    \rightskip\@pnumwidth plus4em \parfillskip-\@pnumwidth
    #5\leavevmode\hskip-\@tempdima
      \ifcase #1
       \or\or \hskip 1em \or \hskip 2em \else \hskip 3em \fi%
      #6\nobreak\relax
    \hfill\hbox to\@pnumwidth{\@tocpagenum{#7}}\par
    \nobreak
    \endgroup
  \fi}
\makeatother

\usepackage{color}
\hypersetup{
    colorlinks=true,
    linkcolor=red,
    urlcolor=black,
    citecolor=blue
           }
           
\usepackage{amsthm}
\usepackage{hyperref}
\usepackage[all]{xy}
\usepackage[scr]{rsfso}

\swapnumbers 

\theoremstyle{plain}

\newtheorem{thm}{Theorem}[section]
\newtheorem{theorem}[thm]{Theorem}

\newtheorem{lemma}[thm]{Lemma}
\newtheorem{conjec}[thm]{Conjecture}
\newtheorem{prop}[thm]{Proposition}
\newtheorem{cor}[thm]{Corollary}

\newcommand\OO{{\mathcal O}}

\theoremstyle{definition}

\newtheorem{definition}[thm]{Definition}

\newtheorem{remark}[thm]{Remark}

\newtheorem{ex}[thm]{Example}

\newtheorem{question}[thm]{Question}

\def\r{\protect\operatorname{rank}}

\DeclareMathOperator{\cd}{{\rm cd}}

\DeclareMathOperator{\M}{{\mathcal{M}}}

\usepackage{enumerate}

\def\pr{\textup{proj}}

\def\id{\protect\operatorname{id}}
\def\Im{\protect\operatorname{Im}}

\def\ch{\protect\operatorname{ch}}

\def\pr{\protect\operatorname{pr}}

\def\scr{\mathcal}
\def\A{{\scr A}}
\def\B{{\scr B}}

\def\C{{\mathbb C}}
\def\Z{{\mathbb Z}}

\def\R{{\mathbb R}}

\usepackage[font=small]{caption}
\def\1{\hbox{\rm\rlap {1}\hskip.03in{\rom I}}}
\def\Bbbone{{\rm1\mathchoice{\kern-0.25em}{\kern-0.25em}
{\kern-0.2em}{\kern-0.2em}I}}

\def\wt{\widetilde}

\def\ov{\overline}

\usepackage{comment}

\long\def\forget#1\forgotten{}

\newcommand\ver[1]{\marginpar{\tiny Changed in Ver \VER}}

\date{\today}

\begin{document}

\title[Symplectically aspherical K\"ahler manifolds]{Symplectically aspherical K\"ahler manifolds, scalar curvature, and the fundamental group}

\author[L.~F.~Di~Cerbo]{Luca~F.~Di~Cerbo}

\author[A.~Dranishnikov]{Alexander~Dranishnikov} 

\author[E.~Jauhari]{Ekansh~Jauhari}

\address{Luca F. Di Cerbo, Department of Mathematics, University of Florida, 358 Little Hall, Gainesville, FL 32611-8105, USA.} 
\email{ldicerbo@ufl.edu}

\address{Alexander N. Dranishnikov, Department of Mathematics, University of Florida, 358 Little Hall, Gainesville, FL 32611-8105, USA.}
\email{dranish@ufl.edu}

\address{Ekansh Jauhari, Department of Mathematics, University of Florida, 358 Little Hall, Gainesville, FL 32611-8105, USA.}
\email{ekanshjauhari@ufl.edu}

\begin{abstract}
We present a detailed study of closed smooth manifolds having K\"ahler forms that pullback to exact forms on the universal cover. We show that these manifolds, which we call symplectically aspherical K\"ahler manifolds, exist in abundance, even outside the aspherical setting, and have interesting topological and geometric features, such as large fundamental group \'a la Koll\'ar and the absence of K\"ahler metrics of positive scalar curvature. Motivated by the latter, we extend the Gromov--Lawson Conjecture on aspherical manifolds to symplectically aspherical manifolds and prove it in the spin case. 

We also study K\"ahler cones on symplectically aspherical K\"ahler manifolds and the realizability problem of their fundamental group, and explore their other complex geometric properties.
\end{abstract}

\subjclass[2020]
{Primary
53C23,  	
32J27,  	
53C55,  	
Secondary
14F35,	
32Q45,  	
55S15.  	
}

\keywords{
Positive scalar curvature, K\"ahler cone,
large fundamental group, symplectically aspherical manifold, 
symmetric product of curves,
spherical homology.
}

\maketitle
\tableofcontents

\section{Introduction}
The study of closed K\"ahler manifolds having extra structures is ubiquitous in complex geometry. For instance, there are Gromov's \emph{K\"ahler hyperbolic manifolds} that are defined as those having a K\"ahler form whose pullback to the universal cover is $d$-bounded (and include, for example, K\"ahler manifolds having negative sectional curvature)~\cite{Gr-kahler hyperbolic}, and then there are Koll\'ar's manifolds having \emph{large fundamental group} (also called \emph{Koll\'ar large manifolds}) that are defined as those whose universal cover does not contain any positive dimensional compact complex subvarieties (and include, for example, K\"ahler manifolds having Stein universal cover)~\cite{Kollar1,Kollar2}. 

In this paper, we study \emph{symplectically aspherical K\"ahler manifolds} that are defined as those having a K\"ahler form whose pullback to the universal cover is exact, or equivalently, that vanishes on every real spherical homology class (such forms are called \emph{aspherical} in the literature). 
The class of these manifolds properly contains the class of K\"ahler hyperbolic manifolds and is contained within the class of Koll\'ar large manifolds, and manifolds in this class have tame topological properties that help obtain results of complex and Riemannian geometric flavors.

There are two primary motivations behind studying these K\"ahler manifolds. The first motivation comes from symplectic topology and geometry, where imposing the asphericity condition (first formulated by Floer in his study of the Arnol'd conjecture~\cite{Floer}) on \emph{symplectic} forms is fruitful for various purposes. Indeed, the Arnol'd conjecture holds true for \emph{symplectically aspherical manifolds}~\cite{RO}, and classical theories such as Floer homology, the Lusternik--Schnirelmann theory for Lagrangian intersections, and the theory of $J$-holomorphic curves are better understood for them (see~\cite{Sch,KRT2} and the references therein). Moreover, the algebraic topology of symplectically aspherical manifolds is well-studied~\cite{LO}. In view of these advantages, it is natural to ask what this topological condition of symplectic asphericity earns one in complex geometric contexts! 
Specifically, upon imposing symplectic asphericity on \emph{K\"ahler} forms, one can investigate consequences at the interface of complex and differential geometry, and approach them using techniques coming also from symplectic and algebraic topology.

The second motivation comes from the placement of symplectically aspherical K\"ahler manifolds in complex algebraic geometry. In the special setting of K\"ahler hyperbolic manifolds, all but possibly the middle $L^2$-Betti numbers of the universal cover vanish (so the Hopf--Singer conjecture is verified)~\cite{Gr-kahler hyperbolic} and the canonical line bundle is ample~\cite{CY}. In the generalized setting of Koll\'ar large manifolds, the K\"ahler universal cover is predicted by the Shafarevich conjecture to be a Stein manifold~\cite{Kollar1,Kollar2}. So, one can view the class of symplectically aspherical K\"ahler manifolds as an intermediate class to extend such results as well as to test such conjectures. As shown recently in~\cite{DCDJ}, several members of this class, remarkably many symmetric products of curves, show promise; see also interesting results obtained and directions pursued in~\cite{JZ,CX,CY2,BDET,DCL1,DCL2}. Therefore, it becomes quite natural to investigate various complex and differential geometric properties of general manifolds belonging to this class.


The goal of this paper is to answer several nuanced questions arising in the study of the geometry and topology of symplectically aspherical K\"ahler manifolds, and to pose some new ones. Of our particular interest in this paper are \emph{non-aspherical} symplectically aspherical K\"ahler manifolds, primarily those with non-trivial second homotopy group (see~\cite{Go2} and Section~\ref{sec: prelim} for motivations), which are especially nice in complex dimension $2$ and whose construction we describe in each dimension. We study which finitely presented discrete groups can be realized as the fundamental group of these manifolds, and give a complete answer in the case of free abelian groups. Our findings are analogous to those obtained in~\cite{IKRT,KRT1} for symplectically aspherical manifolds. We also investigate the positivity of scalar curvature associated with both Riemannian and K\"ahler metrics on these manifolds, and (with and without additional assumptions on the topology of the underlying manifold) we obtain \emph{non-existence results} that support some natural conjectures and extend related results from~\cite{DCDJ}. Of course, these results apply to the class of K\"ahler hyperbolic manifolds, and some of them also extend to that of Koll\'ar large manifolds. Then, we look at the question of what happens to the K\"ahler cone associated with aspherical K\"ahler forms. In this direction, we show that the asphericity of the entire K\"ahler cone is a very constraining condition, and we describe situations where the K\"ahler cone has no other aspherical K\"ahler forms. These results show, in particular, that closely related K\"ahler forms may behave quite differently when pulled back to the universal cover.

\subsection*{Organization of this paper} To make this paper somewhat self-contained, we begin with a review of symplectically aspherical manifolds in Section~\ref{sec: prelim}, where we also define symplectically aspherical K\"ahler manifolds and discuss examples. 

In Section~\ref{sec: in complex geom}, we give a general recipe for constructing \emph{non-aspherical} symplectically aspherical K\"ahler (SAK) manifolds in each complex dimension $\ge$ 2, and prove a comparison theorem, see Theorem~\ref{prop: other kahler}. There we compare the class of SAK manifolds with that of other well-studied K\"ahler manifolds, such as real hyperbolic, those having Stein universal cover, Kobayashi hyperbolic, K\"ahler hyperbolic, and those having large fundamental group (called Koll\'ar large here).

In Section~\ref{sec: fund grp}, we prove the following.

\begin{theorem}[Theorem~\ref{th: main1}]
Each free abelian group of even rank $\ge 4$ is realized as the fundamental group of a closed symplectically aspherical smooth projective surface of general type that has non-trivial real spherical homology classes and a Stein universal cover.    
\end{theorem}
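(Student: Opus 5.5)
The plan is to build the surfaces explicitly from abelian varieties. We obtain all four conditions (fundamental group, symplectic asphericity, Stein universal cover, general type) from the flat geometry of $\C^m$. We then obtain the nontrivial spherical classes from an Euler characteristic count combined with Hopf's theorem. Write the rank as $2n$ with $n\ge 2$.

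\emph{Case $n\ge 3$.} Let $A=\C^n/\Lambda$ be an abelian variety with $\Lambda\cong\Z^{2n}$. Let $S=H_1\cap\dots\cap H_{n-2}$ be a smooth complete intersection of sufficiently ample divisors, which exists by Bertini.
\begin{enumerate}[(i)]
\item \emph{Fundamental group.} By the Lefschetz hyperplane theorem, $\pi_1(S)\to\pi_1(A)=\Z^{2n}$ is an isomorphism, since $\dim S=2\ge 2$.
\item \emph{Stein universal cover.} The universal cover $\wt S$ is the preimage of $S$ in $\C^n$. It is a closed complex submanifold of $\C^n$, hence Stein.
\item \emph{Symplectic asphericity.} The flat Kähler form $\omega_A$ lifts to $\sum_j \tfrac{i}{2}dz_j\wedge d\bar z_j$, which is exact on $\C^n$. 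Hence $\omega_A|_S$ pulls back to an exact form on $\wt S$.
\item \emph{General type.} Since $K_A$ is trivial, adjunction gives $K_S=(H_1+\dots+H_{n-2})|_S$, which is ample.
\end{enumerate}

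\emph{Case $n=2$.} The complete intersection is unavailable here, so I would use a double cover $f\colon S\to A$ of an abelian surface, branched along a smooth curve $B\in|2L|$ with $L$ very ample. Realize $S=\{y^2=s\}$ inside the total space of $L$, where $s\in H^0(L^2)$ cuts out $B$.
\begin{enumerate}[(i)]
\item \emph{Kähler form.} Choose a hermitian metric $h$ on $L$. On compact $S$, the form $\omega=Cf^*\omega_A+i\partial\bar\partial|y|^2_h$ is Kähler for $C\gg0$. Its second term is $d$ of a global form on $S$, since $|y|^2_h$ is a global function. Hence $[\omega]=C f^*[\omega_A]$, and $\omega$ lifts exactly to the universal cover.
\item \emph{Stein universal cover.} $\wt S=S\times_A\C^2$ is a finite branched cover of $\C^2$, hence Stein.
\item \emph{General type.} $K_S=f^*(K_A+L)=f^*L$, which is ample.
\item \emph{Fundamental group.} One needs $\pi_1(S)\cong\Z^4$. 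I would invoke the Lefschetz-type theorem for branched covers with ample branch locus (Nori, Fulton--Lazarsfeld/Catanese). Alternatively, one can apply Lefschetz to $S$ viewed as an ample divisor in a suitable projective compactification of the total space of $L$.
\end{enumerate}
I expect this $\pi_1$ step to be the main obstacle, and would check it carefully.

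\emph{Spherical classes.} By Hopf, the image of $\pi_2(S)\otimes\R\to H_2(S;\R)$ equals the kernel of the surjection $H_2(S;\R)\to H_2(\Z^{2n};\R)\cong\R^{\binom{2n}{2}}$. So it suffices to show $b_2(S)>\binom{2n}{2}$. Since $b_1=b_3=2n$, we have $b_2=e(S)-2+4n$.
\begin{enumerate}[(i)]
\item For $n\ge 3$, $e(S)=c_2$ of the normal data, which grows without bound as the $H_i$ become more ample.
\item For $n=2$, $e(S)=2e(A)-e(B)=B^2=4L^2$. This gives $b_2=4L^2+6>6$.
\end{enumerate}
Hence $S$ has nontrivial real spherical homology classes, and in particular $\pi_2(S)\neq0$. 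These classes are automatically consistent with asphericity: the Kähler form vanishes on them, while Koll\'ar largeness holds because $\wt S$ is Stein.
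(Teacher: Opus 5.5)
Your argument is correct, and it takes a different route from the paper's for ranks $2n\ge 6$ and for showing $\Pi\neq 0$.

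\textbf{Comparison with the paper.} For $\Z^{2g}$ with $g\ge 3$, the paper uses the symmetric square $SP^2(M_g)$ of a generic curve, embedded in its Jacobian by the Abel--Jacobi map. It imports both $\Pi\neq 0$ and the Stein property of the universal cover from earlier work. You instead use general complete-intersection surfaces in an abelian $n$-fold, where Lefschetz, adjunction and the closed embedding $\wt S\subset\C^n$ give everything directly. For $\Z^4$ you use the same construction as the paper: a double cover of an abelian surface branched along a smooth ample curve. The paper cites Pardini--Tovena for $\pi_1$, and gets $\Pi\neq 0$ from a degeneration argument (a singular surface in the same moduli component). Your Hopf plus Euler characteristic count gives $\dim\Pi(S)=b_2(S)-\binom{2n}{2}$ uniformly, for instance $\dim\Pi=4L^2$ for the double cover.

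Your route is more elementary and self-contained, needs no genericity, and computes $\Pi$ exactly. The paper's route yields explicit surfaces whose $\Pi$ and K\"ahler cone it analyzes again when proving uniqueness of the aspherical K\"ahler class. Your explicit form $Cf^*\omega_A+i\partial\bar\partial|y|^2_h$ also works: at ramification points the second term is positive on $\ker df$, which is what makes $C\gg 0$ suffice.

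\textbf{Step (iv) for $n=2$ needs tightening.} Your fallback of applying Lefschetz in a compactification of $\mathrm{Tot}(L)$ cannot work. Since $S$ is compact in $\mathrm{Tot}(L)$, it misses the boundary of any compactification.
\begin{enumerate}
\item In $\mathbb{P}(\mathcal{O}\oplus L)$, $S$ is disjoint from the section at infinity, so it is not ample.
\item The one-point compactification (the projective cone over $(A,L)$) is homeomorphic to the Thom space of $L$. That space is simply connected, so no Lefschetz statement there can produce $\Z^4$.
\end{enumerate}
Citing the literature is fine, and it is what the paper does. Note, though, that the general branched-cover results give a priori only a central extension of $\pi_1(A)$ by a finite group, so you still need the kernel to vanish. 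Here is a direct argument.
\begin{enumerate}
\item Since $B$ is ample, $A\setminus B$ is Stein. Hence the complement of a tubular neighbourhood $N$ of $B$ is obtained from a collar of the circle bundle $\partial N$ by attaching handles of index $\ge 2$.
\item Therefore $\pi_1(\partial N)\to\pi_1(A\setminus B)$ is onto. The meridian $\gamma$, being the fibre class of $\partial N\to B$, is central, and $\langle\gamma\rangle$ is the kernel of $\pi_1(A\setminus B)\to\pi_1(A)$.
\item Now $\pi_1(S\setminus R)=\ker(\phi\colon\pi_1(A\setminus B)\to\Z/2)$, where $\phi(\gamma)\neq 0$. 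This subgroup maps onto $\pi_1(A)$ with kernel $\langle\gamma^2\rangle$.
\item Filling in $R$ kills exactly its meridian, which is $\gamma^2$. So $\pi_1(S)\cong\pi_1(A)=\Z^4$.
\end{enumerate}
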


In Section~\ref{sec: psc}, we present results on the scalar curvature associated with K\"ahler and Riemannian metrics on symplectically aspherical manifolds. In particular, we propose a symplectic version of the famous Gromov--Lawson conjecture.
\begin{conjec}
A closed symplectically aspherical manifold cannot support a Riemannian metric of positive scalar curvature.     
\end{conjec}

We prove this conjecture in dimensions $\le 4$ using the Seiberg--Witten theory (see Theorem \ref{th: 2 and 4}), and for all manifolds with spin universal covers in the case of abelian fundamental groups (see Corollary~\ref{cor: abelian fund grp}). For general fundamental groups and dimensions, we show the following.

\begin{theorem}[Theorem~\ref{th: c-symplectic psc}]\label{th: same}
A closed symplectically aspherical manifold having a spin finite cover cannot support a Riemannian metric of positive scalar curvature. 
\end{theorem}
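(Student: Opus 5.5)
Throughout, $M$ is a closed symplectically aspherical manifold of dimension $2n$ with symplectic form $\omega$. Let $p\colon\widetilde M\to M$ be the universal cover, so that $p^*\omega=d\eta$ for some $1$-form $\eta$. Suppose $M$ has a finite cover $M'$ that is spin.

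\emph{Reduction to the spin case.} Everything in the hypothesis passes to $M'$. The pullback of $\omega$ to $M'$ is again symplectic and aspherical, because $M'$ has the same universal cover. A metric of positive scalar curvature on $M$ also lifts to one on $M'$. So it suffices to treat the case where $M$ itself is spin, and from now on I assume this.

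\emph{Strategy: show $M$ is enlargeable in a cohomological sense.} The key step is to show that $M$ has the largeness property of Gromov--Lawson that rules out positive scalar curvature for spin manifolds. The class $[\omega]^n$ is nonzero in $H^{2n}(M;\R)$, since $\omega^n$ is a volume form. On $\widetilde M$ the form $\omega$ becomes exact, so $\omega^n=d(\eta\wedge\omega^{n-1})$.

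I would use this to show that the fundamental class of $M$ is carried, rationally, to a nonzero class by the classifying map $f\colon M\to B\pi$ with $\pi=\pi_1(M)$. This is the standard fact that symplectically aspherical manifolds are \emph{essential} in the rational sense, as in the Lupton--Oprea and Rudyak--Oprea literature already cited. Concretely, $\omega$ vanishes on $\pi_2$, so it is the pullback of a class $a\in H^2(B\pi;\R)$. Then $f^*(a^n)=[\omega]^n\neq 0$, which gives $f_*[M]\neq 0$ in $H_{2n}(B\pi;\Q)$, and the map is rationally essential.

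\emph{Main obstacle and how I would attack it.} Rational essentialness alone does not obviously rule out positive scalar curvature for arbitrary $\pi$; the strong Novikov conjecture is not known in general. Instead, I would exploit that the class $a^n$ is a product of degree-$2$ classes that are ``bounded'' on the cover in a suitable sense. Here is the approach I would try first.

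Use Gromov's hyperspherical or $\Lambda^2$-enlargeability criterion, namely the area-enlargeability of Gromov--Lawson. On $\widetilde M$, or on suitable finite or infinite covers, the exact form $d\eta$ lets one build maps of small area-contraction to $S^{2n}$ of nonzero degree. One way to do this is via twisting the spinor Dirac operator by flat-up-to-small-curvature bundles, in the form of Hanke--Schick almost-flat bundles.

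More precisely, for a large parameter $t$, the form $t\,p^*\omega=d(t\eta)$ is the curvature of the line bundle on $\widetilde M$ with connection $t\eta$. Passing to the $C^*$-algebraic or Hilbert-module bundle over $M$ (Mishchenko-type), its curvature is controlled by $\|\omega\|$. The index pairing with the twisted Dirac operator then computes $\int_M e^{t\omega}\hat A(M)$. The top-degree term $t^n\int_M\omega^n/n!\neq 0$, so the index is nonzero for suitable $t$.

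\emph{Concluding the argument.} If $M$ carried a metric of positive scalar curvature $\kappa\ge\kappa_0>0$, the Lichnerowicz formula would give
\[
D_t^2=\nabla^*\nabla+\tfrac{\kappa}{4}+\mathcal{R}_t .
\]
Here $\mathcal{R}_t$ is the curvature term of the twisting bundle. The danger is that $\|\mathcal{R}_t\|$ grows like $t\|\omega\|$; this is exactly the obstacle. To handle it, I would rescale by replacing $t$ with $1/k$ and using the $k$-fold tensor structure, following the Hanke--Schick argument. Equivalently, I would cite their theorem directly, which states that spin manifolds with a nonvanishing ``infinite K-area'' pairing admit no positive scalar curvature metric.

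With the curvature term small compared with $\kappa_0/4$, the twisted Dirac operator $D_t$ is invertible. This forces the index to vanish, contradicting the nonvanishing computed above, and so $M$ admits no metric of positive scalar curvature.
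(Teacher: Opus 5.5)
Your reduction to a spin finite cover and your essentialness step ($[\omega]=f^*a$ with $a\in H^2(B\pi;\R)$, so $f^*(a^n)\neq0$) match the paper's first steps. You then set the Novikov route aside as unavailable, and that is the missing idea. The class $a^n$ lies in the subring of $H^*(B\pi;\R)$ generated by degree-two classes, and for such classes the strong Novikov conjecture for $C^*_{\max}\pi$ \emph{is} a theorem (Hanke--Schick, the result the paper cites). Since $\langle a^n,\ch(f_*[M]_K)\rangle=\int_M\omega^n\neq0$ (only the degree-zero part of $\hat A(M)$ contributes), the assembly map sends $f_*[M]_K$ to a nonzero element of $K_*(C^*_{\max}\pi)$. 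That element is the Rosenberg index of $M$, and Rosenberg's theorem then rules out positive scalar curvature. No twisting construction has to be done by hand.

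Your replacement argument has a genuine gap at the step ``passing to the Mishchenko-type bundle over $M$.''
\begin{itemize}
\item The Lichnerowicz estimate needs $|t|$ small. But the connection $d+it\eta$ on the trivial line bundle over $\widetilde M$ is not invariant under the deck group. Writing $\gamma^*\eta-\eta=d\phi_\gamma$, it is invariant only under the \emph{projective} action $\gamma\mapsto e^{it\phi_\gamma}\gamma^*$, whose multiplier $\sigma_t$ is nontrivial for general $t$.
\item Consequently, for small non-integral $t$ there is no $C^*\pi$-module bundle over $M$ with curvature $t\omega$. For integral $t$ and integral $[\omega]$ there is an honest line bundle, but its curvature is not small.
\item To make your route work, you must twist by the Hilbert $C^*(\pi,\sigma_t)$-module bundle. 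You must then use the projective $L^2$-index theorem (Gromov, Mathai) to identify the trace of the index with $P(t)=\int_M\hat A(M)e^{t\omega}$. None of this is in your proposal.
\end{itemize}

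Once that is in place, your ``obstacle'' disappears. $P$ is a polynomial with leading coefficient $\frac{1}{n!}\int_M\omega^n\neq0$, so $P(t)\neq0$ for all sufficiently small $t\neq0$. That is exactly the range where invertibility of $D_t$ forces the index to vanish. Neither large $t$ nor a ``$k$-fold tensor structure'' is needed; tensor powers increase curvature, and you would instead need $k$-th roots, which do not exist on $M$.

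Your fallback citation does not close the gap either. The Hanke--Schick infinite K-area theorem requires honest small-curvature bundles on $M$ or its covers with a nonzero Chern number. You have not shown that a symplectically aspherical manifold has infinite K-area.
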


The obstruction to metrics of positive scalar curvature obtained in Theorem~\ref{th: c-symplectic psc} also follows from Theorem~A in the recent work of Bei--Cecchini~\cite{BC}, whose approach is different from ours.

We then prove a strengthening of our conjecture for the scalar curvature associated with a K\"ahler metric in each complex dimension $\ge 2$. 

\begin{theorem}[Theorem~\ref{th: asphericalK}]
A smooth projective variety admitting a birational morphism onto a symplectically aspherical smooth projective variety cannot support a K\"ahler metric of positive scalar curvature.     
\end{theorem}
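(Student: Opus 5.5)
The plan is to argue by contradiction, combining the classical criterion for Kähler metrics of positive scalar curvature with the fact that an aspherical Kähler class kills every rational curve.

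Let $f\colon X\to Y$ be a birational morphism, where $X$ is smooth projective and $Y$ is a symplectically aspherical smooth projective variety. Suppose $X$ carries a Kähler metric $g$ with Kähler form $\omega_g$ and positive scalar curvature. The scalar curvature is, up to a positive constant, the trace of the Ricci form with respect to $\omega_g$. Hence, with $n=\dim_\C X$,
\[
\int_X c_1(X)\wedge[\omega_g]^{n-1} > 0, \qquad \text{equivalently} \qquad K_X\cdot[\omega_g]^{n-1}<0 .
\]
So $K_X$ is not pseudo-effective. By Boucksom--Demailly--Păun--Peternell, a projective manifold whose canonical bundle is not pseudo-effective is uniruled. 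Therefore $X$ is covered by rational curves. Through a general point $x\in X$ there is a nonconstant map $h\colon\mathbb{P}^1\to X$.

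Next push these curves down to $Y$. Since $f$ is birational, it is an isomorphism over a dense Zariski open set $U\subset Y$. Choose $x$ in $f^{-1}(U)$. Then $f\circ h\colon\mathbb{P}^1\to Y$ is nonconstant, because $f$ is injective near $x$ and $h(\mathbb{P}^1)$ is a curve through $x$. Let $\omega_Y$ be an aspherical Kähler form on $Y$. Since $\mathbb{P}^1\cong S^2$, the class $(f\circ h)_*[S^2]$ is a spherical homology class, so $\int_{S^2}(f\circ h)^*\omega_Y=0$. On the other hand, $f\circ h$ is a nonconstant holomorphic map and $\omega_Y$ is Kähler, so this integral equals the positive area of the image curve. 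This is a contradiction.

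An alternative route is to use that a symplectically aspherical Kähler manifold is Kollár large, which gives that $Y$ contains no rational curves. The main obstacle is the uniruledness step, which relies on the deep BDPP criterion (or Miyaoka--Mori). Once it is granted, the argument is formal, and it uses only that a nonconstant holomorphic image of $\mathbb{P}^1$ has positive $\omega_Y$-area.
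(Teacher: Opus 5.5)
Your proof is correct, but it uses the Mori-theoretic input on the other side of the morphism from the paper.

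The final step is shared. The paper shows $K_M$ is pseudo-effective and then notes that a singular metric on $K_M$ with positive curvature current obstructs positive scalar curvature. That is exactly the contrapositive of your computation $K_X\cdot[\omega_g]^{n-1}<0$.

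The difference is where the hypothesis on the target enters. The paper works on $Y$ (its $N$):
\begin{itemize}
\item an aspherical K\"ahler form gives every holomorphic sphere zero area (the paper phrases this via Koll\'ar largeness), so $Y$ has no rational curves;
\item Mori's cone theorem then makes $K_Y$ nef;
\item hence $K_X=f^*K_Y+R$ with $R$ effective is pseudo-effective.
\end{itemize}
You work on $X$ instead. BDPP converts non-pseudo-effectivity of $K_X$ into uniruledness. A rational curve through a point where $f$ is an isomorphism then pushes forward to a nonconstant holomorphic sphere in $Y$, which contradicts asphericity directly.

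What each route buys:
\begin{itemize}
\item The paper needs only the cone theorem (existence of $K$-negative rational curves when $K$ is not nef), a lighter input than BDPP, which additionally requires the duality between the pseudo-effective cone and movable curves. It also yields nefness of $K_Y$ as a by-product.
\item Your route dispenses with the relation $K_X=f^*K_Y+R$, so it works verbatim for any generically finite surjective morphism onto $Y$. Your first three steps amount to the Heier--Wong theorem that a projective manifold with a K\"ahler metric of positive total scalar curvature is uniruled.
\end{itemize}
Both arguments in fact obstruct positive \emph{total} scalar curvature, not just pointwise positivity.
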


In Sections~\ref{sec: all aspherical} and~\ref{sec: non-aspherical}, we present a set of results concerning the study of the K\"ahler cones of symplectically aspherical K\"ahler manifolds. In particular, we look at the following two extremes: 
\begin{enumerate}
    \item when \emph{all} K\"ahler forms in the K\"ahler cone are aspherical;
    \item when there is an \emph{essentially unique} aspherical K\"ahler form. 
\end{enumerate} 
In the first situation, our results suggest that the topology of such manifolds is constrained, especially for K\"ahler surfaces.

\begin{theorem}[Corollary~\ref{cor: dim 4 all aspherical}]
Let $M$ be a closed K\"ahler surface with an integrable almost complex structure $J$. If every $J$-compatible K\"ahler form on $M$ is aspherical, then real spherical homology classes of $M$ cannot be represented by any symplectic $2$-sphere embedded with respect to any symplectic structure on $M$.  
\end{theorem}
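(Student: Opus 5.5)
We argue by contradiction. Suppose $a\in H_2(M;\mathbb R)$ is a nonzero real spherical class, and $S\subset M$ is an embedded $2$-sphere that is symplectic for some symplectic form $\sigma$ on $M$ and represents $a$ (or a positive multiple of it). The goal is to produce a $J$-compatible K\"ahler form $\omega$ with $\langle[\omega],[S]\rangle\neq 0$. Such an $\omega$ would not be aspherical, contradicting the hypothesis.

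\textbf{Step 1: find a nonnegative class.} First, $S$ cannot be homologically trivial, since $\int_S\sigma>0$. We then use the Seiberg--Witten/Taubes machinery on the closed symplectic $4$-manifold $(M,\sigma)$. An embedded symplectic sphere with $[S]^2\ge 0$ forces $(M,\sigma)$ to be rational or ruled, by McDuff's theorem. If $[S]^2<0$, we instead pair $[S]$ against Kähler classes via the Kähler cone description. In either case we aim for the following. Since $M$ is K\"ahler, $b^+(M)$ is odd. The Kähler cone $\mathcal K_J$ is open in $H^{1,1}(M;\mathbb R)$, and its closure is the nef cone. The hypothesis says that $\mathcal K_J\subset a^{\perp}$, where $a^\perp=\{c: \langle c,a\rangle=0\}$. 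Because $\mathcal K_J$ is open in $H^{1,1}_{\mathbb R}$, the functional $\langle\cdot,a\rangle$ must then vanish identically on $H^{1,1}_{\mathbb R}$. Since $a$ is real and the pairing kills $H^{2,0}\oplus H^{0,2}$ exactly when $a$ is of type $(1,1)$-dual, this forces the Poincar\'e dual $\mathrm{PD}(a)$ to lie in $H^{2,0}\oplus H^{0,2}$ (real part). This is the key linear-algebra observation, and it is the content I expect the earlier Section~\ref{sec: all aspherical} results to provide. In particular, $\mathrm{PD}(a)\cdot \mathrm{PD}(a)>0$ if $a\ne 0$, since the intersection form is positive definite on the real part of $H^{2,0}\oplus H^{0,2}$. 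So $[S]^2>0$.

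\textbf{Step 2: rational or ruled.} McDuff's theorem now applies to $S$, so $(M,\sigma)$ is rational or ruled, hence diffeomorphic to a rational or ruled surface. For such manifolds $b^+=1$ and $p_g=0$, so $H^{2,0}=0$ for the K\"ahler structure $J$ as well, since $p_g$ is a topological invariant of K\"ahler surfaces, namely $b^+=2p_g+1$.

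\textbf{Step 3: conclude.} With $H^{2,0}=0$, Step 1 gives $\mathrm{PD}(a)=0$, a contradiction.

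The main obstacle is the positivity claim in Step 1. It requires that the pairing $\langle\cdot,a\rangle$ vanishing on $H^{1,1}_{\mathbb R}$ puts $\mathrm{PD}(a)$ in $(H^{2,0}\oplus H^{0,2})_{\mathbb R}$. By the Hodge decomposition and the orthogonality of $H^{1,1}$ with $H^{2,0}\oplus H^{0,2}$ under the cup product, this is standard. The real care needed is that the openness of $\mathcal K_J$ in $H^{1,1}_{\mathbb R}$ (a nonempty open cone) indeed spans $H^{1,1}_{\mathbb R}$.
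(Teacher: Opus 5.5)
Your proposal is correct. Its skeleton matches the paper's: the hypothesis forces the Poincar\'e dual of $[S]$ into $(H^{2,0}\oplus H^{0,2})_{\mathbb R}$, the intersection form is positive definite there, so $[S]^2>0$, and McDuff's theorem makes $(M,\sigma)$ rational or ruled. You depart from the paper in two places.

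\textbf{The key inclusion.} The paper quotes Proposition~\ref{prop: all aspherical}, whose proof goes through Li--Zhang's relations between the $J$-tame and $J$-compatible cones. You instead argue directly: a linear functional vanishing on the nonempty open cone $\mathcal K_J\subset H^{1,1}_{\mathbb R}$ vanishes on all of $H^{1,1}_{\mathbb R}$. Cup-orthogonality of $H^{1,1}_{\mathbb R}$ and $(H^{2,0}\oplus H^{0,2})_{\mathbb R}$, together with nondegeneracy of the intersection form, then identifies the orthogonal complement. This is shorter. It also makes explicit that the relevant dual is the Poincar\'e dual, which is exactly what is needed to read off $[S]\cdot[S]>0$.

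\textbf{The closing step.} The paper observes that rational and ruled surfaces have fundamental groups with $\cd_{\mathbb R}<4$. This contradicts \eqref{eq: cd}, which applies because $M$ carries an aspherical K\"ahler form. You close Hodge-theoretically instead: $b^+=1$ forces $p_g(M,J)=0$ via $b^+=2p_g+1$, so $(H^{2,0}\oplus H^{0,2})_{\mathbb R}=0$. Hence $\mathrm{PD}[S]=0$, contradicting $\int_S\sigma>0$. Your ending needs no input from Lemma~\ref{lem: essential} or from asphericity beyond the hypothesis on K\"ahler classes; the paper's ending uses the symplectic asphericity of $M$. Both routes are valid.

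\textbf{Tidying the write-up.} Drop the opening case split on the sign of $[S]^2$ and the appeal to Seiberg--Witten theory in Step~1, since your linear algebra shows $[S]^2>0$ unconditionally. Replace the garbled sentence about the pairing ``killing'' $H^{2,0}\oplus H^{0,2}$ with the orthogonal-complement argument from your final paragraph. Finally, state that $\mathcal K_J\neq\emptyset$ because $(M,J)$ is K\"ahler.
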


For the second situation, we give an explicit family of examples.

\begin{theorem}[Theorem~\ref{th: unique Kaehler}]
Let a complex curve $M_g$ be generic in its moduli space. Its $n$-th symmetric product $SP^n(M_g)$ for $n\le\lfloor\tfrac{g+1}{2}\rfloor$ is a closed smooth projective variety that parametrizes effective divisors of degree $n$ on $M_g$, and the pullback $\mu_n^*\omega_0$ under the Abel--Jacobi map $\mu_n\colon SP^n(M_g)\to T^{2g}:=\C^g/\Z^{2g}$, where $\omega_0$ is the canonical aspherical K\"ahler form on $T^{2g}$, is an aspherical K\"ahler form on $SP^n(M_g)$. Let $J$ denote the corresponding almost complex structure on $SP^n(M_g)$.

Then a $J$-compatible K\"ahler form $\omega'$ on $SP^n(M_g)$ is aspherical if and only if it is cohomologous to a non-zero scalar multiple of $\omega$.    
\end{theorem}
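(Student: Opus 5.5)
The argument has two parts. First, $\omega=\mu_n^*\omega_0$ is a K\"ahler aspherical form. Second, the cohomology classes of aspherical K\"ahler forms fill only a single ray. Throughout, $n\le\lfloor\tfrac{g+1}{2}\rfloor$, so in particular $n<g$.

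\textbf{Step 1: $\omega$ is K\"ahler.} For a generic curve $M_g$ and $n\le\lfloor\tfrac{g+1}{2}\rfloor$, the curve carries no $g^1_n$ by Brill--Noether theory. So every fibre $|D|$ of $\mu_n$ is a point, and $\mu_n$ is injective. It is also an immersion, because the differential of the Abel--Jacobi map fails to be injective at $D$ exactly when $h^0(D)>1$. Hence $\mu_n$ is an embedding onto $W_n\subset T^{2g}$, and $\omega=\mu_n^*\omega_0$ is K\"ahler.

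\textbf{Step 2: $\omega$ is aspherical.} The form $\omega_0$ is aspherical because $\pi_2(T^{2g})=0$. Pullback of an aspherical form is aspherical, since any sphere in $SP^n(M_g)$ maps to a sphere in $T^{2g}$ and $\omega$ evaluates on it the way $\omega_0$ does on the image.

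\textbf{Step 3: real spherical homology.} Since $n<g$, Macdonald's computation applies: $H^2(SP^n(M_g);\R)$ is spanned by $\eta$, the class of $SP^{n-1}\subset SP^n$, together with $\Lambda^2H^1(M_g)$. By Picard-number genericity (the Jacobian of a generic curve has $NS=\Z\theta$), the Néron--Severi group, and hence the $(1,1)$-classes in the relevant real span, is the span of $\eta$ and $\theta=\mu_n^*[\omega_0]$. By Hurewicz, the real spherical classes are the image of $\pi_2\otimes\R$; for symmetric products this is the kernel of $(\mu_n)_*$ on $H_2$, which is rank one and dual to the relation killing $\Lambda^2H^1$. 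Since $\pi_1$ is abelian $\Z^{2g}$, the spherical classes are the kernel of $H_2(SP^n)\to H_2(K(\Z^{2g},1))=H_2(T^{2g})$. So I will: (a) take $\sigma\in H_2(SP^n(M_g);\Z)$ with $(\mu_n)_*\sigma=0$ and $\langle\eta,\sigma\rangle\ne0$, using $\eta$ and the formula for $\theta$ in terms of $\eta$ and $\sigma_i$; (b) note that $\theta(\sigma)=0$ automatically.

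\textbf{Step 4: conclude.} Let $\omega'$ be $J$-compatible K\"ahler. Its class is real $(1,1)$, so by Step 3 it equals $a\theta+b\eta$. If $\omega'$ is aspherical, then $0=[\omega'](\sigma)=b\,\eta(\sigma)$, so $b=0$ and $[\omega']=a[\omega]$. The scalar $a$ is non-zero, since a K\"ahler class is non-zero. Conversely, any form cohomologous to $a\omega$ evaluates to zero on spherical classes, so it is aspherical.

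\textbf{Main obstacle.} The main obstacle is Step 3: justifying that the real $(1,1)$-classes are exactly the span of $\theta$ and $\eta$, and exhibiting a spherical class with $\eta(\sigma)\ne0$. For the first point I would rely on the fact that the Hodge classes in $\Lambda^2H^1$ are those of $NS(J(M_g))=\Z\theta$ for a generic curve. For the second I would use Macdonald's presentation to compute the kernel of $(\mu_n)_*$ explicitly.
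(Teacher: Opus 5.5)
Your Steps 1--2 and your description $\Pi(SP^n(M_g))=\ker(\mu_n)_*$ are fine. The gap is in Step 3, exactly where you flagged it, and it cannot be closed.

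Genericity gives $NS(J(M_g))=\Z\theta$, hence $N^1(SP^n(M_g))\otimes\R=\R\eta\oplus\R\theta$. That constrains only \emph{rational} $(1,1)$-classes. A $J$-compatible K\"ahler class is merely a real $(1,1)$-class, and the K\"ahler cone is open in $H^{1,1}_J(SP^n(M_g))_\R$. For $n\ge 2$ (so $g\ge 3$), Macdonald gives $H^2(SP^n(M_g);\R)=\mu_n^*H^2(T^{2g};\R)\oplus\R\eta$ with $\mu_n^*$ injective. Since $\mu_n$ is holomorphic, $H^{1,1}_J(SP^n(M_g))_\R=\mu_n^*H^{1,1}(T^{2g})_\R\oplus\R\eta$, which has dimension $g^2+1$, not $2$.

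Step 4 then actually fails. Take a translation-invariant K\"ahler form $\omega_1$ on $T^{2g}$, coming from a positive definite Hermitian form on $\C^g$, with $[\omega_1]\notin\R[\omega_0]$. Such forms exist: invariant forms inject into cohomology, and the positive ones form an open cone of dimension $g^2\ge 9$.
\begin{itemize}
\item $\mu_n^*\omega_1$ is a $J$-compatible K\"ahler form, being the pullback under a holomorphic embedding.
\item It is aspherical, by your own Step 2 argument, since every closed $2$-form on $T^{2g}$ is.
\item It is not cohomologous to any multiple of $\omega$, because $\mu_n^*$ is injective on $H^2$.
\end{itemize}
In fact the aspherical K\"ahler classes are exactly the K\"ahler classes lying in $\mu_n^*H^{1,1}(T^{2g})_\R$, an open cone in a $g^2$-dimensional space. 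So for arbitrary real K\"ahler forms the asserted conclusion is false, and no repair of Step 3 is possible.

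The paper's proof makes the same tacit identification. It writes $[\omega']=\alpha c^*+\beta\sum_i a_i^*b_i^*$ with $\alpha,\beta\in\Z$ ``by what has been said above.'' What both arguments genuinely prove is the statement restricted to K\"ahler forms whose class lies in $N^1(SP^n(M_g))\otimes\R$, for instance curvature forms of ample line bundles.

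For that restricted statement your outline coincides with the paper's. Both use that $\theta=[\omega]$ vanishes on $\Pi$, and both need a spherical class on which $\eta=c^*$ is non-zero. The paper gets this by computing $\langle c^*,u\rangle=1$ for $u=c-\sum_i a_i\cdot b_i$, using explicit Poincar\'e duals and Macdonald's relations. You can avoid that computation. Since $\Pi(SP^n(M_g))=\ker(\mu_n)_*$ is the annihilator of the hyperplane $\mu_n^*H^2(T^{2g};\R)$ in $H^2(SP^n(M_g);\R)$, any non-zero spherical class must pair non-trivially with the complementary class $\eta$.
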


Showing the latter involves the use of the description of the N\'eron--Severi group of these manifolds, along with cohomological computations \'a la Macdonald, see Section~\ref{sec: non-aspherical}.
\\

Throughout the paper, we mention several questions of different flavors (along with their motivations) that, we hope, will stimulate further interest and research.

\section{Definitions, examples, and overview} \label{sec: prelim}
Let $M$ be a closed smooth $2n$-manifold. We say that a closed $2$-form $\omega$ on $M$ is \emph{$c$-symplectic} ($c$ for ``cohomologically'') if $[\omega]^n\ne 0$ in $H^{2n}(M;\R)$. Of course, a symplectic form on $M$ is $c$-symplectic since its $n$-fold exterior product is the volume form on $M$, but the converse is not true in general (say, on $\C P^2\#\C P^2$). 

A $c$-symplectic form $\omega$ is called \emph{aspherical} if for every smooth map $f\colon S^2\to M$, we have 
\[
\int_Sf^*\omega=0.
\]
Equivalently, $\omega$ is aspherical if and only if $\langle[\omega],\Pi(M)\rangle=0$, where $\Pi(M)$ is the vector space of all real spherical homology classes of $M$, i.e., 
\[
\Pi(M)=\Im(h\otimes\id_\R\colon\pi_2(M)\otimes\R\to H_2(M;\R)),
\]
where $h\colon\pi_2(M)\to H_2(M;\Z)$ is the Hurewicz map.
If $M$ admits such a form, it is called \emph{$c$-symplectically aspherical}. A closed manifold equipped with an aspherical symplectic form is called \emph{symplectically aspherical}. 

We now consider manifolds equipped with aspherical K\"ahler forms.

\begin{definition}
    Let $(M,J)$ be a closed K\"ahler manifold. If there exists a K\"ahler form $\omega$ on $M$ compatible with $J$ that is aspherical in the aforementioned sense, then the triad $(M,J,\omega)$ is called a \emph{symplectically aspherical K\"ahler} manifold.
\end{definition}

If $\Pi(M)=0$ (for example, if $\pi_2(M)=0$), then each $c$-symplectic (and hence, K\"ahler) form on $M$ is aspherical. Conversely, it is easy to see that $\Pi(M)=0$ if every symplectic form on $M$ is aspherical (see, for instance,~\cite[Proposition~8.1]{IKRT}). We call such manifolds \emph{totally symplectically aspherical}. In Section~\ref{sec: all aspherical}, we will study \emph{totally symplectically aspherical K\"ahler} or \emph{totally K\"ahler aspherical} manifolds, i.e., those on which each K\"ahler form, compatible with a given integrable almost complex structure, is aspherical.

\begin{ex}\label{ex: sa and sak} 
\begin{enumerate}[(1)]
    \item For each $n$, the projective space $\C P^n$ is not symplectically aspherical since $\Pi(\C P^n)\cong H_2(\C P^n;\R)\cong\R$ by the Hurewicz theorem.
    \item  The Kodaira--Thurston manifold $\mathcal{KT}=\mathcal{H}_3\times S^1$ (where $\mathcal{H}_3$ is the Heisenberg $3$-manifold) and some of its branched covers are symplectically aspherical (see~\cite{Go2}), but never K\"ahler (see also~\cite{Thurston}).

    \item For each $n\ge 2$, the $n$-th symmetric product $SP^n(M_g)$ of a projective curve $M_g$ for $g\ge 2n-1$, where $M_g$ has a \emph{generic} complex structure in its moduli space $\M_g$, is a symplectically aspherical K\"ahler manifold, see~\cite[Theorem~4.1]{DCDJ} (and also Section~\ref{sec: non-aspherical}). Recall that $SP^n(M_g)$ is the orbit space of the permutation action of the symmetric group $\Sigma_n$ on the Cartesian product $(M_g)^n$.\label{eq: ex for SP}
\end{enumerate}
\vspace{3mm}
The smooth projective varieties $SP^n(M_g)$, and some of Gompf's branched covers of $\mathcal{KT}$, have infinite $\Pi$, and so they are ``non-trivial examples''. Several other such manifolds are known --- for different examples, see~\cite{Go2,IKRT}.
\end{ex}

\begin{remark}
    We note that symplectically aspherical K\"ahler manifolds are \emph{not} just symplectically aspherical manifolds that support a K\"ahler structure. It follows from~\cite{DCDJ} that the $n$-th symmetric product $SP^n(M_g)$ of a curve $M_g$, where $M_g$ has a \emph{hyperelliptic} complex structure, is \emph{not} symplectically aspherical K\"ahler for $n,g\ge 2$ since it contains a holomorphically embedded rational curve (on which any compatible K\"ahler form obviously evaluates non-trivially). However, $SP^n(M_g)$ is a smooth projective variety whose underlying manifold is symplectically aspherical for $g\ge 2n-1$ (\emph{cf}. (3) in Example~\ref{ex: sa and sak}).
\end{remark}

Let us look at basic properties of symplectically aspherical K\"ahler manifolds.

\begin{lemma}\label{lem: sak properties}
    Symplectically aspherical K\"ahler manifolds are stable under taking finite products, complex submanifolds, and finite covers.
\end{lemma}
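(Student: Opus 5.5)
Each of the three stability properties will be checked directly against the definition: we exhibit a compatible K\"ahler form and verify that it integrates to zero over every smooth map $f\colon S^2\to M$. The verification in each case reduces to how spherical classes behave under the relevant map.

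\emph{Finite products.} Let $(M_1,J_1,\omega_1)$ and $(M_2,J_2,\omega_2)$ be symplectically aspherical K\"ahler. On $M_1\times M_2$ we take the product structure $J_1\times J_2$ and the form $\omega=\pr_1^*\omega_1+\pr_2^*\omega_2$, which is a compatible K\"ahler form. For $f\colon S^2\to M_1\times M_2$ we get
\[
\int_{S^2}f^*\omega=\int_{S^2}(\pr_1\circ f)^*\omega_1+\int_{S^2}(\pr_2\circ f)^*\omega_2 .
\]
Both terms vanish because $\pr_i\circ f$ is a map from $S^2$ into $M_i$. Induction then handles any finite product.

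\emph{Complex submanifolds.} Let $\iota\colon N\hookrightarrow M$ be a closed complex submanifold. Then $\iota^*\omega$ is a K\"ahler form on $N$ compatible with the induced complex structure: it is closed, and positive on $J$-complex tangent vectors. For $f\colon S^2\to N$ we have $\int f^*\iota^*\omega=\int(\iota\circ f)^*\omega=0$.

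\emph{Finite covers.} Let $p\colon\widetilde M\to M$ be a finite covering. We pull back both structures, giving $p^*J$ and $p^*\omega$. Since $p$ is a local biholomorphism, $p^*\omega$ is K\"ahler and compatible with $p^*J$. For $f\colon S^2\to\widetilde M$, the composite $p\circ f$ is a map $S^2\to M$, so $\int f^*p^*\omega=0$. Compactness of $\widetilde M$ uses finiteness of the cover.

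None of the steps is a genuine obstacle. The one point requiring care is that the definition quantifies over all smooth maps from $S^2$, equivalently over $\Pi$. In every case the argument rests on the naturality fact that spherical classes push forward to spherical classes: $h\otimes\id_\R$ commutes with induced maps, so for any map $g$ we have $g_*\Pi(N)\subseteq\Pi(M)$. Phrased this way, $\langle[g^*\omega],\Pi(N)\rangle=\langle[\omega],g_*\Pi(N)\rangle=0$ gives all three cases uniformly.
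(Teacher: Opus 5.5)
Your proposal is correct and is essentially the paper's proof. The paper uses the same three forms: the sum of the pulled-back forms on the product, the restriction $\iota^*\omega$ to the submanifold, and the pullback $p^*\omega$ to the cover. It defers the asphericity check to an external reference, whereas you carry it out directly by composing maps $S^2\to M$ with the projections, the inclusion, and the covering map. In your closing uniform argument, the product form is a sum of two pullbacks rather than a single one, so that case also uses linearity of the pairing; this is harmless.
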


\begin{proof}
    Let $(M,J_M,\omega_M)$ and $(N,J_N,\omega_N)$ be two symplectically aspherical K\"ahler manifolds, let 
    $\iota\colon (M',\iota^*J_M,\iota^*\omega_M)\hookrightarrow (M,J_M,\omega_M)$ be a complex submanifold, and let $p\colon N'\to N$ be a finite covering. First, $\iota^*\omega_M$ is an aspherical K\"ahler form on $M'$ by definition. Next, let $\pr_M\colon M\times N\to M$ and $\pr_N\colon M\times N\to N$ be the projections. Then it is easy to check that $\omega = \pr_M^*\omega_M+\pr_N^*\omega_N$ and $\omega'=p^*\omega_N$ are aspherical K\"ahler forms on $M\times N$ and $N'$, respectively, with respect to the naturally induced complex structures (see, for instance,~\cite[Proposition~2.1]{KRT2}).
\end{proof}

The existence of a symplectically aspherical form on a closed smooth manifold is rather constraining on its topology and geometry, as the following results suggest (see also Section~\ref{sec: fund grp} for additional constraints imposed, particularly on the fundamental group, in the presence of aspherical \emph{K\"ahler} forms).

\begin{lemma}\label{lem: essential}
If $(M,\omega)$ is a closed $c$-symplectically aspherical $2n$-manifold, then
\begin{enumerate}
    \item $[\omega]\in \Im(f^*\colon H^2(B\pi_1(M);\R) \to H^2(M;\R))$ for a map $f\colon M\to B\pi_1(M)$ that classifies the universal cover of $M$;

    \item $M$ is essential, or, equivalently, its Lusternik--Schnirelmann category is $2n$;

    \item $M$ satisfies the strong Arnol'd conjecture, provided $\langle c_1(M),\Pi(M)\rangle=0$ for the first Chern class of $M$, and $S^1$-actions on $M$ are never Hamiltonian.
\end{enumerate}
\end{lemma}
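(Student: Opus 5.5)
For part (1), I would take the classifying map $f\colon M\to B\pi_1(M)$, which is an isomorphism on $\pi_1$, and, after replacing $B\pi_1(M)$ by a suitable CW model, assume $f$ is cellular and $2$-connected. Concretely, $B\pi_1(M)$ is obtained from $M$ by attaching cells of dimension $\ge 3$ that kill $\pi_2$ and higher homotopy. The cellular chain complex then gives the exact sequence
\[
\pi_2(M)\xrightarrow{h} H_2(M;\Z)\xrightarrow{f_*} H_2(B\pi_1(M);\Z)\to 0,
\]
the standard Hopf exact sequence. Tensoring with $\R$ keeps it exact, so $\ker(f_*\otimes\R)=\Pi(M)$ and $f_*$ is surjective in real coefficients. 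By the universal coefficient theorem over $\R$, $H^2(-;\R)=\mathrm{Hom}(H_2(-;\R),\R)$. A class $[\omega]$ vanishing on $\Pi(M)=\ker f_*$ therefore factors through $f_*$, i.e. equals $f^*\alpha$ for some $\alpha\in H^2(B\pi_1(M);\R)$. This is exactly (1).

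For part (2), I would use (1): $[\omega]^n=f^*(\alpha^n)\neq 0$ in $H^{2n}(M;\R)$. Hence $f^*\colon H^{2n}(B\pi_1(M);\R)\to H^{2n}(M;\R)$ is nonzero. If $f$ could be deformed into the $(2n-1)$-skeleton of $B\pi_1(M)$, then $f^*$ would vanish in degree $2n$, a contradiction. So $M$ is essential in Gromov's sense. For the equivalence with $\mathrm{cat}(M)=2n$, I would cite the theorem of Berstein and Dranishnikov that a closed $m$-manifold ($m\ge 3$) has LS category $m$ iff it is essential. The case $2n=2$ is trivial, since a $c$-symplectically aspherical surface is not $S^2$. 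Alternatively, cup-length over $\R$ gives $\mathrm{cat}\ge 2n$ via Berstein-class arguments, and $\mathrm{cat}\le\dim$ gives the upper bound.

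For part (3), I would cite the literature. Rudyak–Oprea~\cite{RO} prove that for symplectically aspherical manifolds with $\langle c_1,\Pi\rangle=0$, the number of fixed points of a Hamiltonian diffeomorphism is at least $\mathrm{cat}(M)+1$. Combined with (2), this gives $2n+1$, which is the strong Arnol'd bound. For Hamiltonian $S^1$-actions, I would argue that the moment map is a Morse–Bott function with fixed points, and that a symplectic sphere appears through the orbits joining fixed components (a gradient sphere) on which $\omega$ integrates positively, contradicting asphericity. Alternatively, I would cite the known result that such actions on symplectically aspherical manifolds have no fixed points and hence are not Hamiltonian (see \cite{LO,KRT2}). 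Here we need $\omega$ genuinely symplectic, as in the hypothesis of (3).

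The main obstacle is (3). Its analytic content cannot be reproved here, so the plan is to reduce it carefully to the cited theorems and to make sure their hypotheses ($\omega$ symplectic, the $c_1$ condition) match. Parts (1) and (2) are routine homotopy theory.
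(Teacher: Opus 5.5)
Your proposal is correct and takes the same route as the paper. The paper gives no self-contained proof: it defers to \cite{LO,RO} and observes, exactly as you do, that (1) implies (2) because $\langle\alpha^n,f_*[M]_\R\rangle=\langle[\omega]^n,[M]_\R\rangle\neq 0$, and your Hopf-sequence argument for (1) and your reduction of (3) to Rudyak--Oprea are the standard arguments behind those citations. One slip needs fixing: your ``alternative'' claim that cup-length over $\R$ gives $\mathrm{cat}(M)\ge 2n$ is false, since $[\omega]^n\neq 0$ has cup-length only $n$ and so gives only $\mathrm{cat}(M)\ge n$. The correct alternative is Rudyak--Oprea's observation that $[\omega]$, being pulled back from $B\pi_1(M)$, has category weight $2$, so $[\omega]^n$ has weight $\ge 2n$ and forces $\mathrm{cat}(M)\ge 2n$.
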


We refer the reader to~\cite{LO,RO} for topological proofs of Lemma~\ref{lem: essential}.

Actually, $(1)\implies (2)$ above since a closed orientable $k$-manifold $M$ is \emph{essential} (also called \emph{rationally essential}) if $f_*([M]_\R)\ne 0$ in $H_{k}(B\pi_1(M);\R)$ for a classifying map $f\colon M\to B\pi_1(M)$, see~\cite{Gr-ess} (and also~\cite{KR} for equivalent definitions). Also, a direct implication of $(1)$ for the cohomological dimension of $\pi_1(M)$ is that 
\begin{equation}\label{eq: cd}
\cd_\R(\pi_1(M))\ge 2n=\dim_\R(M), 
\end{equation}
since $[\omega]^n\ne 0$. Of course, the equality $\cd_\R(\pi_1(M))=2n$ holds when $M$ is a closed aspherical $2n$-manifold (i.e., has contractible universal cover), but the converse is far from being true --- see~\cite{KRT1} and Section~\ref{sec: fund grp}.  

Next, we mention that $\omega$ is an aspherical K\"ahler form on a closed manifold $M$ if and only if its pullback on the universal cover of $M$ is exact (note that $\omega$ cannot be exact on $M$ itself); this is easy to see from the basic properties of the second real Hurewicz map. In fact, the following is well-known 
(see, for example,~\cite{Bor}).

\begin{lemma}\label{lem: exact}
Let $M$ be a closed symplectic manifold, and let $\pi\colon\wt M\to M$ be the universal Riemannian covering. Then the following are equivalent:
\begin{enumerate}
    \item A symplectic form $\omega$ on $M$ is aspherical.
    \item The pullback symplectic form $\pi^*\omega$ on $\wt M$ is exact.
    \item If $f\colon (N,\omega')\to (M,\omega)$ is a symplectic map, where $(N,\omega')$ is a closed positive dimensional symplectic manifold, then $f_*(\pi_1(N))$ is infinite.
\end{enumerate}
\end{lemma}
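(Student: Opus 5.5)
The plan is to prove $(1)\Leftrightarrow(2)$ from the Hurewicz theorem on $\wt M$ together with de Rham theory, and then to close the cycle with $(2)\Rightarrow(3)\Rightarrow(1)$. The step $(2)\Rightarrow(3)$ is a volume argument on a finite cover. The step $(3)\Rightarrow(1)$ is a contrapositive in which a non-aspherical sphere is turned into a ``symplectic map''.

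For $(1)\Leftrightarrow(2)$: the form $\pi^*\omega$ is closed on $\wt M$. Since $\wt M$ is simply connected, the Hurewicz theorem gives $H_2(\wt M;\Z)\cong\pi_2(\wt M)$, and hence $H_2(\wt M;\R)$ is spanned by images of spheres. By de Rham's theorem and the universal coefficient theorem over $\R$, the form $\pi^*\omega$ is exact if and only if $\int_{S^2}\tilde f^*\pi^*\omega=0$ for every smooth map $\tilde f\colon S^2\to\wt M$. Since $S^2$ is simply connected, every map $f\colon S^2\to M$ lifts to some $\tilde f$ with $\pi\circ\tilde f=f$, and conversely every $\tilde f$ gives $f=\pi\circ\tilde f$. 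Therefore this vanishing condition is exactly asphericity of $\omega$.

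For $(2)\Rightarrow(3)$: suppose $f\colon(N,\omega')\to(M,\omega)$ satisfies $f^*\omega=\omega'$ (it suffices that $f^*[\omega]=[\omega']$), and suppose $f_*(\pi_1(N))$ is finite. Then $K=\ker(f_*)$ has finite index in $\pi_1(N)$. Let $p\colon N'\to N$ be the corresponding finite cover; $N'$ is closed and carries the symplectic form $p^*\omega'$. Because $(f\circ p)_*$ is trivial on $\pi_1(N')$, the map $f\circ p$ lifts to $g\colon N'\to\wt M$ with $\pi\circ g=f\circ p$. Writing $\pi^*\omega=d\alpha$, we get $[p^*\omega']=[g^*d\alpha]=0$ in $H^2(N';\R)$. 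This contradicts $\int_{N'}(p^*\omega')^k>0$, where $2k=\dim N'>0$.

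For $(3)\Rightarrow(1)$: I argue by contrapositive. If $\omega$ is not aspherical, there is $f\colon S^2\to M$ with $a=\int f^*\omega\neq0$, and after precomposing with an orientation-reversing map we may assume $a>0$. Let $N=S^2$ with an area form $\omega'$ of total area $a$. Then $f^*[\omega]=[\omega']$ and $\pi_1(N)=1$, so $f_*(\pi_1(N))$ is finite, and (3) fails. The main obstacle is the strict reading of ``symplectic map'', namely $f^*\omega=\omega'$ on the nose. My plan there is as follows. I would first replace $f$ by a generic perturbation in the same homotopy class; this leaves $a$ unchanged, since $\omega$ is closed. The goal is then to arrange that $f^*\omega$ is nondegenerate and positive. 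I would try to achieve this by deforming $f$ (for instance via an $h$-principle for maps pulling back a closed $2$-form to a symplectic form, or by building $f$ locally out of small $\omega$-positive discs, which exist by Darboux). Once $f^*\omega$ is positive, Moser's theorem allows us to take $\omega'=f^*\omega$ itself. If that fails, I would state the lemma with the cohomological notion of symplectic map, which is what the argument for $(2)\Rightarrow(3)$ actually uses.
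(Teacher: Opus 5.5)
Your proofs of $(1)\Leftrightarrow(2)$ and of $(2)\Rightarrow(3)$ are correct. The first uses Hurewicz on $\wt M$, universal coefficients over $\R$, de Rham, and lifting spheres. The second passes to the finite cover $N'$ determined by $\ker f_*$, lifts $f\circ p$ to $\wt M$, and contradicts $\int_{N'}(p^*\omega')^k>0$ by exactness. These are the ``standard topological arguments'' the paper alludes to. For $(3)\Rightarrow(1)$, your contrapositive via a non-aspherical sphere is also the paper's route. The tool you mention in passing is exactly the one the paper invokes, Gromov's $h$-principle for isosymplectic immersions.

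\textbf{The gap.} As written, this step is a list of things you would try, not a proof. Yet it is the entire content of the hard direction: you must produce an honest map $f_1\colon S^2\to M$ with $f_1^*\omega=\omega'$.
\begin{enumerate}
\item Your fallback of reading ``symplectic map'' cohomologically does not rescue the lemma; it replaces it by an easier statement. Hypothesis (3) then applies to more maps, so $(3)\Rightarrow(1)$ becomes trivial.
\item The ``glue small $\omega$-positive discs'' idea does not work either. Nondegeneracy of $f^*\omega$ is needed at every point of the glued sphere, including the transition regions, and local patching gives no control there.
\item Moser is unnecessary: once $f_1^*\omega$ is nondegenerate, it is itself a symplectic form on $S^2$.
\end{enumerate}

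\textbf{How to close it.} Quote the $h$-principle precisely and verify its hypotheses. It says (Gromov; see Eliashberg--Mishachev): for closed $N$ with $\dim N<\dim M$, a map $f_0\colon N\to M$ with $f_0^*[\omega]=[\omega']$ that is covered by a fibrewise isosymplectic monomorphism $F\colon (TN,\omega')\to(TM,\omega)$ is homotopic to an isosymplectic immersion.

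Take $N=S^2$, oriented so that $a=\int_{S^2}f^*\omega>0$, and $\omega'$ a positive area form of total area $a$. Then the cohomological condition is just your normalization.

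For the formal condition, assume $\dim M=2n\ge 4$. Symplectic vector bundles over $S^2$ are classified by $c_1\in\Z$, which is additive. Hence $f^*TM\cong L_2\oplus L_{m-2}\oplus E$, where:
\begin{itemize}
\item $m=\langle c_1(TM),f_*[S^2]\rangle$;
\item $L_k$ denotes the rank-$2$ symplectic bundle with $c_1=k$;
\item $E$ is the trivial symplectic bundle of rank $2n-4$.
\end{itemize}
Since $(TS^2,\omega')\cong L_2$, the inclusion of the first summand is the required $F$.

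The resulting $f_1$ satisfies $f_1^*\omega=\omega'$ and $(f_1)_*\pi_1(S^2)=1$, contradicting (3).

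The codimension hypothesis fails when $\dim M=2$, so that case must be handled separately, but it is trivial. A closed symplectic surface carrying a non-aspherical form is $S^2$, and $\id\colon(S^2,\omega)\to(S^2,\omega)$ violates (3).
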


The implications (1) $\iff$ (2) $\implies$ (3) use standard topological arguments; however, proving (3) $\implies$ (1) requires also the use of Gromov's $h$-principle for isosymplectic immersions (see~\cite{EM}). 

Lemma~\ref{lem: exact} implies the following (see also~\cite[Corollary~2.12]{TJLi1}). 

\begin{cor}\label{cor: no spheres}
If $(M,\omega)$ is a symplectically aspherical $2n$-manifold, then it does not contain any symplectically embedded $2$-spheres. In particular, any symplectically aspherical $4$-manifold is minimal. 
\end{cor}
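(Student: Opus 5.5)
The argument is a direct application of Lemma~\ref{lem: exact}, either through the implication (1)$\implies$(3) or, more elementarily, straight from the definition of asphericity. Suppose $\iota\colon S^2\hookrightarrow M$ is a symplectically embedded $2$-sphere, meaning that $\iota^*\omega$ is a symplectic (area) form on $S^2$. Then $(S^2,\iota^*\omega)$ is a closed, positive-dimensional symplectic manifold, and $\iota$ is a symplectic map into $(M,\omega)$. Condition (3) of Lemma~\ref{lem: exact} would force $\iota_*(\pi_1(S^2))$ to be infinite. This is impossible because $\pi_1(S^2)=0$. Hence $\omega$ could not be aspherical, which is a contradiction.

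Alternatively, I can avoid (3) and its appeal to the $h$-principle. Since $\iota^*\omega$ is an area form on $S^2$, we have $\int_{S^2}\iota^*\omega>0$ for the orientation it induces. This contradicts the defining condition $\int_{S^2}f^*\omega=0$ for every smooth map $f\colon S^2\to M$. I will present this version as the main proof, because it relies only on the definition, and mention (3) as the conceptual reason.

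For the ``in particular'' statement, take $\dim M=4$. By definition, a symplectic $4$-manifold is minimal if it contains no symplectically embedded sphere of self-intersection $-1$, that is, no exceptional sphere. The first part rules out every symplectically embedded $2$-sphere, so in particular every exceptional one. Therefore $(M,\omega)$ is minimal.

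I do not expect any real obstacle. The only points needing care are the convention for ``symplectically embedded'', namely that the restricted form is nondegenerate so its integral is nonzero, and quoting the standard definition of minimality.
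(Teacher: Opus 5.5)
Your proof is correct and is essentially the paper's argument: the paper simply derives the corollary from Lemma~\ref{lem: exact}, where a symplectically embedded sphere has trivial $\pi_1$-image, contradicting (3), and your definition-based version ($\int_{S^2}\iota^*\omega>0$ versus $\int_{S^2}f^*\omega=0$) is the elementary core of that same reasoning. One small correction: the $h$-principle is needed only for the converse (3)$\implies$(1), so the direction (1)$\implies$(3) you invoke does not use it.
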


Recall that a closed symplectic $4$-manifold $(M,\omega)$ is called \emph{minimal} if it does not contain any symplectically embedded $2$-spheres with self-intersection $-1$.

\section{Symplectically aspherical K\"ahler manifolds in complex geometry}\label{sec: in complex geom}

In this section, we look at some complex geometric aspects of symplectically aspherical K\"ahler manifolds and study how these manifolds interact with other classically studied closed K\"ahler manifolds, such as Kobayashi(--Brody) hyperbolic, Gromov's K\"ahler hyperbolic, and Koll\'ar large manifolds.

First, we describe a general method for constructing symplectically aspherical K\"ahler manifolds. Note that closed K\"ahler manifolds that are aspherical are symplectically aspherical K\"ahler for obvious reasons (\emph{cf}. Section~\ref{sec: prelim}), and their examples are easy to come by (see, for instance,~\cite{ABCKT,IKRT}). Also, several methods for constructing \emph{non-aspherical} symplectically aspherical manifolds are known in the literature (see, for example,~\cite{Go2,IKRT,FMS,KRT1} and the references therein). In that spirit, our method also yields non-aspherical 
examples of symplectically aspherical K\"ahler manifolds in each complex dimension $\ge 2$. 

For this purpose, below we use the standard cyclic covering construction outlined in~\cite[Section~I.17]{bpv}.

\begin{thm}\label{th: new construction}
Let $X^n$ be the $k$-cyclic branched cover of an aspherical smooth projective $n$-variety $Y^n$, branched along a smooth ample divisor $B^{n-1}$ determined by a line bundle $L$ such that $L^{\otimes k}=\OO_{Y^n}(B^{n-1})$. Then $X^{n}$ is a symplectically aspherical smooth projective $n$-variety for $n\ge 1$,
but it is not aspherical for $n\ge 2$.
\end{thm}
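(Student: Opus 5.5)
Let $p\colon X^n\to Y^n$ denote the $k$-cyclic branched covering, a finite holomorphic map of smooth projective varieties (smoothness of $X^n$ coming from smoothness of $B^{n-1}$, as in~\cite[Section~I.17]{bpv}). The plan has two independent parts: produce an aspherical K\"ahler form on $X^n$, and exhibit a non-trivial higher homotopy group of $X^n$ when $n\ge 2$.

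\emph{Symplectic asphericity.} Since $Y^n$ is aspherical and projective, any K\"ahler form $\omega_Y$ on $Y^n$ is aspherical, because $\pi_2(Y^n)=0$. The pullback $p^*\omega_Y$ is a closed real $(1,1)$-form which is positive away from the ramification locus $R=p^{-1}(B)$, but only semipositive along $R$. To fix this, choose a K\"ahler form $\omega_X$ on $X^n$ (for instance, the restriction of a Fubini--Study form under a projective embedding) and set
\[
\omega_\epsilon = p^*\omega_Y+\epsilon\,\omega_X, \qquad \epsilon>0 .
\]
Each $\omega_\epsilon$ is K\"ahler, but it is not obviously aspherical. The cleanest route is to show directly that $\Pi(X^n)$ is annihilated by a suitable K\"ahler class. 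For any $f\colon S^2\to X^n$, the composite $p\circ f$ lifts to the universal cover $\wt Y$, and $\wt Y$ is contractible, so $\langle p^*[\omega_Y], f_*[S^2]\rangle=0$. Hence $p^*[\omega_Y]$ is aspherical, and it suffices to find a K\"ahler form in the class $p^*[\omega_Y]$ itself. Since $p$ is finite, $p^*$ of an ample line bundle is ample. Taking $\omega_Y$ integral, so that it represents the ample bundle $A$, the bundle $p^*A$ is ample on $X^n$. Its first Chern class $p^*[\omega_Y]$ therefore contains a K\"ahler form $\omega$, namely the curvature of a positive metric, and $\omega$ is aspherical because asphericity depends only on the cohomology class. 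This settles asphericity for all $n\ge1$. The only point needing care is the ampleness of a pullback under a finite morphism, which is standard via the Nakai--Moishezon criterion.

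\emph{Non-asphericity for $n\ge2$.} This is the main obstacle. I would argue via the fundamental group and the Euler characteristic. First, $\pi_1(X^n)\cong\pi_1(Y^n)$. The reason is that $B$ is ample, so by Lefschetz-type results (Nori, or the Lefschetz hyperplane theorem for branched covers along ample divisors) the branched cover induces an isomorphism on $\pi_1$ when $n\ge2$. Concretely, $X^n$ embeds in the total space of $L$ as the zero locus of $t^k-s$, and $L$ is ample, so a Lefschetz argument identifies $\pi_1(X^n)$ with $\pi_1$ of $L$, which is $\pi_1(Y^n)$.

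Now suppose $X^n$ were aspherical. Then $X^n\simeq B\pi_1(Y^n)\simeq Y^n$, and the homotopy equivalence would be induced by $p$ on $\pi_1$. In particular $p$ would have degree $\pm1$, since it is a map between aspherical manifolds inducing a $\pi_1$-isomorphism. But $\deg p=k\ge2$, a contradiction. Equivalently, one can compare Euler characteristics: $\chi(X)=k\chi(Y)-(k-1)\chi(B)$, while asphericity forces $\chi(X)=\chi(Y)$.

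As a fallback if the $\pi_1$-isomorphism proves delicate, I would use the degree argument only with $p_*$ injective with finite-index image. Under that weaker hypothesis, multiplicativity of the degree for coverings between aspherical manifolds still yields $k=[\pi_1Y:p_*\pi_1X]$. One then needs $p$ to be unbranched, which contradicts the assumption that $B\ne\emptyset$; the divisor $B$ is non-empty because it is ample and $n\ge1$.
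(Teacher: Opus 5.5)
Your proof is correct. The asphericity half is essentially the paper's argument. Both pull back an ample class along the finite map, and both use that $\pi_2(Y)=0$ and that asphericity depends only on the cohomology class. The paper pulls back the specific class $c_1(K_Y\otimes L^{\otimes(k-1)})$, which is ample because $K_Y$ is nef for aspherical $Y$, so the aspherical K\"ahler class it produces is $c_1(K_X)$. Your arbitrary ample $A$ avoids the nefness input. The cost is that you do not show $K_X$ is ample, but the statement does not require it.

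The non-asphericity half takes a genuinely different route. The paper quotes a central extension $1\to N\to\pi_1(X)\to\pi_1(Y)\to 1$ with $N$ finite. It excludes $N\neq 1$ because fundamental groups of closed aspherical manifolds are torsion-free, and it refers to [DCP] when $N=1$. You instead take $p_*$ to be an isomorphism and conclude by Whitehead plus degree. A $\pi_1$-isomorphism between closed aspherical manifolds is a homotopy equivalence, hence has degree $\pm1$, while the holomorphic map $p$ has degree $k\ge 2$. This is elementary and self-contained, and it would settle the paper's case $N=1$ verbatim without [DCP]. What it does not tell you is which higher homotopy group of $X$ is non-zero, e.g.\ whether $\pi_2(X)\neq 0$. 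That is the information relevant to the classes $\mathcal{B}'_n$ of Section~\ref{sec: fund grp}, and the paper obtains it by separate means in the $\mathbb{Z}^4$ case.

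Some of your supporting remarks need repair.

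Your ``concrete'' Lefschetz sketch does not work as stated. $\operatorname{Tot}(L)$ is non-compact, and $X$ is not an ample divisor in the natural compactification $\mathbb{P}(\mathcal{O}\oplus L)$, since it misses the section at infinity. The $\pi_1$-isomorphism should rest on Nori's argument:
\begin{itemize}
\item $Y\setminus B$ is Stein of dimension $n\ge 2$. It is obtained from a collar of the circle bundle bounding a tubular neighbourhood of $B$ by attaching handles of index $\ge n\ge 2$, so $\pi_1$ of that circle bundle surjects onto $\pi_1(Y\setminus B)$.
\item Hence the meridian $\gamma$ is central, and $\ker(\pi_1(Y\setminus B)\to\pi_1(Y))=\langle\gamma\rangle$.
\item $\pi_1(X\setminus R)$ is the kernel of the map to $\mathbb{Z}/k$ sending $\gamma\mapsto 1$, and the meridian of $R$ maps to $\gamma^k$. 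Therefore $p_*$ is an isomorphism.
\end{itemize}

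The Euler characteristic variant is not ``equivalent''. The equality $\chi(X)=\chi(Y)$ only yields $\chi(B)=\chi(Y)$, which you would still need to rule out. This is fine for surfaces, where $\chi(Y)\ge 0>-(K_Y+B)\cdot B=\chi(B)$, but unclear in general.

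In the fallback, the step ``$p$ must be unbranched'' needs a reason. The lift of $p$ to the finite cover of $Y$ corresponding to $p_*\pi_1(X)$ is a finite holomorphic map of degree one, hence an isomorphism by Zariski's main theorem.
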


\begin{proof}
The varieties $X^n$ are constructed as follows. Start with an ample line bundle, say $L$, on $Y$ and a smooth divisor $B$ in the linear system $|kL|$. It is standard to construct a degree $k$ finite morphism of projective varieties $f\colon X\to Y$, which we call the {\em $k$-cyclic cover given by the relation $kL\sim B$} (here, $\sim$ denotes linear equivalence). The variety $X$ is smooth, since  $B$ is, and standard formulae for cyclic branched covers (see, for instance,~\cite[Lemma~17.1]{bpv}) give
\[
K_X=f^*(K_Y\otimes L^{\otimes (k-1)})=f^*(K_{Y})\otimes \OO_{X}(R),
\]
where $K_X$ denotes, as usual, the canonical bundle, and $R$ the ramification divisor. Recall that the canonical line bundle $K_{Y}$ is \emph{nef} for an aspherical smooth projective variety $Y$ (see, for example, the proof of Theorem~8.2 in~\cite{DCDJ}), and that the sum of a nef divisor with an ample one is ample (see, for instance,~\cite[Chapter~I]{Laz}). This implies that $K_{X}$ is ample, since $K_Y\otimes L^{\otimes (k-1)}$ is ample and the map is finite (see again~\cite[Chapter I]{Laz}). Since $K_X$ and $K_Y\otimes L^{\otimes (k-1)}$ are ample, we can find K\"ahler classes $\omega_X\in c_1(K_X)$ and $\omega_Y\in c_1(K_Y\otimes L^{\otimes (k-1)})$ such that $\omega_X=f^*\omega_Y$. Now, the fact that $\omega_X$ is an aspherical form follows from standard cohomological computations since $\omega_Y$ is an aspherical form (as $Y$ is an aspherical variety), see~\cite[Proposition 2.1]{KRT2}. Finally, to see that $X^n$ is not aspherical for any $n\ge 2$, note from~\cite[Proposition~3.3]{PT} that there is a central extension
\begin{equation}\label{eq: pardini}
1\to N\to \pi_1(X)\to \pi_1(Y)\to 1 
\end{equation}
for some \emph{finite} group $N$. The non-asphericity of $X$ is straightforward if $N$ is non-trivial, and it follows from~\cite[Theorem~2.2~and~Remark~2.3]{DCP} otherwise.
\end{proof}

Next, recall from Corollary~\ref{cor: no spheres} that a symplectically aspherical K\"ahler surface is minimal. Actually, we can say more for \emph{non-aspherical} such manifolds!

\begin{prop}\label{general}
Let $(M, J, \omega)$ be a symplectically aspherical K\"ahler surface that is not aspherical. Then $M$ is of general type (i.e., has maximal Kodaira dimension).
\end{prop}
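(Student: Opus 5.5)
We will go through the Enriques--Kodaira classification, ruling out each Kodaira dimension $\kappa(M)\le 1$. By Corollary~\ref{cor: no spheres}, $M$ is minimal. By Lemma~\ref{lem: essential}, $M$ is essential and $\cd_\R(\pi_1(M))\ge 4$; in particular $\pi_1(M)$ is infinite, and it has infinite real cohomological dimension at least $4$. The plan is to show that every minimal K\"ahler surface with $\kappa\le 1$ that satisfies these constraints is aspherical, contradicting the hypothesis.

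\emph{Case $\kappa=-\infty$.} Minimal surfaces here are $\C P^2$ or ruled surfaces over a curve $C$. For $\C P^2$ and for rational ruled surfaces, $\pi_1$ is finite, which is excluded. For a ruled surface over $C$ of genus $g\ge 1$, the fibre is a holomorphic $\C P^1$, and any K\"ahler form evaluates positively on it. This contradicts asphericity, and in fact Lemma~\ref{lem: exact}(3) excludes it directly.

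\emph{Case $\kappa=0$.} Minimal surfaces here are K3, Enriques, complex tori, and bielliptic surfaces. K3 surfaces are simply connected and Enriques surfaces have $\pi_1=\Z/2$, so both are excluded by the finiteness of $\pi_1$ (or by essentialness). Tori and bielliptic surfaces have universal cover $\C^2$ and are therefore aspherical, contrary to the hypothesis.

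\emph{Case $\kappa=1$.} $M$ is a minimal properly elliptic surface $p\colon M\to C$. This is the main obstacle. I would use the orbifold structure: after passing to a finite cover, which is allowed by Lemma~\ref{lem: sak properties} and preserves both asphericity and non-asphericity, we may assume all multiple fibres are removed. The standard exact sequence
\[
\pi_1(F)\to\pi_1(M)\to\pi_1^{orb}(C)\to 1
\]
then applies. If the base orbifold is spherical, or if $\pi_1(F)\to\pi_1(M)$ has finite image, then $\cd_\R\pi_1(M)\le 2$ rationally, which contradicts $\cd_\R\ge 4$. Otherwise the fibration has no singular fibres apart from multiple ones, $\pi_1(F)\cong\Z^2$ injects, and $M$ is (a quotient of) an aspherical Seifert-type surface whose universal cover is $\C\times\mathbb{H}$ or $\C\times\C$, hence aspherical; this again contradicts the hypothesis. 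The delicate point is the presence of singular fibres, for example nodal rational curves. If the $j$-invariant is non-constant or the surface has singular fibres, then $\pi_1(F)$ dies in $\pi_1(M)$, since it is killed by a vanishing cycle. Then $\pi_1(M)\cong\pi_1^{orb}(C)$, which has rational cohomological dimension at most $2$, contradicting \eqref{eq: cd}.

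With all cases $\kappa\le 1$ eliminated, $\kappa(M)=2$.
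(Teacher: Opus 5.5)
Your cases $\kappa=-\infty$ and $\kappa=0$ are the paper's argument: rational or ruled surfaces contain rational curves; K3 and Enriques surfaces have finite $\pi_1$; tori and bielliptic surfaces are covered by $\C^2$.

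For $\kappa=1$ you take a genuinely different route. The paper quotes \cite[Proposition~2.1]{ADL}: a non-aspherical properly elliptic surface has exceptional fibres containing rational curves. A K\"ahler form integrates positively over such curves, so asphericity fails exactly as in the ruled case. You instead argue through $\pi_1$. A singular fibre makes the image of $\pi_1(F)$ in $\pi_1(M)$ finite, so $\pi_1(M)$ is a finite extension of $\pi_1^{orb}(C)$ and has $\cd_\R\le 2$, contradicting \eqref{eq: cd}. With no singular fibres, the canonical bundle formula shows that $\kappa=1$ forces a hyperbolic base orbifold. Then $M$ is finitely covered by a torus bundle over a curve of genus $\ge 2$, hence is aspherical. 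This is correct, and it uses the topological constraint of Lemma~\ref{lem: essential} rather than positivity of $\omega$ on curves.

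Your stated reason for the key step, however, is too thin. A single vanishing cycle of an $I_k$ fibre only kills a rank-one subgroup of $\pi_1(F)\cong\Z^2$. You need its whole monodromy orbit, which spans a finite-index sublattice because for non-constant $j$ the monodromy fixes no rational line. When $j$ is constant there is no vanishing cycle at all; instead you use that the singular fibres $I_0^*, II, III, IV, II^*, III^*, IV^*$ are simply connected. The clean statement is the Friedman--Morgan theorem: if there is at least one singular fibre, then $\pi_1(M)\cong\pi_1^{orb}(C)$.

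Also, the finite cover removing multiple fibres need not exist when the base orbifold is bad. Since you already write the exact sequence with $\pi_1^{orb}(C)$, you can simply drop that step.

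Finally, the detour is avoidable. By Kodaira's list, every singular fibre other than $mI_0$ is a union of possibly singular rational curves, so the singular-fibre case is dispatched in one line, exactly as in your $\kappa=-\infty$ case. That is in effect what the paper does, and it is the shorter route.
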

\begin{proof}
First, note that $M$ cannot have negative Kodaira dimension, as this would imply that it is either rational or ruled. These manifolds contain rational curves, and as such cannot be symplectically aspherical K\"ahler (\emph{cf}. Corollary~\ref{cor: no spheres}). By the Kodaira--Enriques classification, surfaces with zero Kodaira dimension are covered either finitely by a $K3$ surface or infinitely by $\C^2$. Thus, if $M$ is non-aspherical with $\text{Kod}(M)=0$, then $\pi_1(M)$ is finite and so $M$ cannot be symplectically aspherical K\"ahler by the Hurewicz theorem. If $M$ is non-aspherical with $\text{Kod}(M)=1$, then by~\cite[Proposition~2.1]{ADL}, it is a properly elliptic surface with exceptional fibers containing rational curves. But then again $M$ cannot be symplectically aspherical K\"ahler. Thus, we must have $\text{Kod}(M)=2$.
\end{proof}

\begin{remark}
Proposition~\ref{general} does not generalize to higher dimensions starting from threefolds. Indeed, the product of the symmetric square $SP^2(M_g)$ with the complex torus $\C/\Lambda$ has Kodaira dimension $2$ (so is \emph{not} of general type), even though it is non-aspherical and symplectically aspherical K\"ahler for $g\geq 3$ when the Riemann surface $M_g$ has a generic complex structure (\emph{cf}. Example~\ref{ex: sa and sak}).
\end{remark}

Finally, we proceed to compare the class of symplectically aspherical K\"ahler manifolds with other well-studied classes of closed K\"ahler manifolds. 

Let us recall that a closed K\"ahler manifold $(M,J,\omega)$ 
\begin{enumerate}
    \item is \emph{real hyperbolic} if and only if $\pi_2(M)=0$ and $\pi_1(M)$ is a hyperbolic group, in the sense of~\cite{Gr-hyperbolic grp};
    \item has \emph{Stein universal cover} $\wt M$ if and only if $\wt M$ can be biholomorphically embedded as a closed complex subspace of $\C ^N$ for some positive integer $N$;
    \item is \emph{Kobayashi hyperbolic} if and only if every $J$-holomorphic map $f\colon\C \to M$ is constant, see~\cite{Brody};
    \item is \emph{K\"ahler hyperbolic} if and only if the pullback K\"ahler form $\pi^*\omega$ on $\wt M$ along the universal covering $\pi\colon(\wt M,\wt g)\to (M,g)$ is $d$\emph{-bounded} (i.e., $\pi^*\omega=d\omega'$ for a closed $1$-form $\omega'$ that is bounded with respect to $\wt g$), see~\cite{Gr-kahler hyperbolic};
    \item has \emph{large fundamental group} if and only if the universal cover $\wt M$ does not contain positive dimensional compact complex subspaces~\cite{Kollar1,Kollar2} (for brevity, we call such manifolds \emph{Koll\'ar large}).
\end{enumerate}

It is natural to study relationships between the above K\"ahler manifolds and symplectically aspherical K\"ahler manifolds, especially those of general type (which include, in particular, \emph{all} non-aspherical symplectically aspherical K\"ahler surfaces, see Proposition~\ref{general}). 
The latter have an ample canonical line bundle and admit a K\"ahler--Einstein metric of negative Ricci curvature, see~\cite[Theorem~2.8]{PingLi} and also~\cite[Section~5]{DCDJ} for illustrations of these results using the symmetric products $SP^n(M_g)$ for $g\ge 2n-1\ge 3$ in the case when $M_g$ is a complex curve generic in its moduli space. 
It is also worth noting from~\cite[Corollary~1.7]{CY2} that a symplectically aspherical K\"ahler manifold having a hyperbolic fundamental group is K\"ahler hyperbolic, so due to Gromov~\cite{Gr-kahler hyperbolic}, it is of general type and satisfies the Hopf--Singer conjecture on the $L^2$-Betti numbers of the universal cover.


\begin{thm}\label{prop: other kahler}
    We have the following relationships between the above classes of closed K\"ahler manifolds (here, ``SAK'' stands for symplectically aspherical K\"ahler):
    \[
    \begin{tikzcd}[row sep=2.6em, column sep=3em, arrows=Rightarrow, every arrow/.append style={shift left=0.8ex}]
    \fbox{\Centerstack[c]{Kobayashi\\ hyperbolic}} \arrow[swap,negated]{rrr}{(d)} 
    & & & 
    \fbox{\Centerstack[c]{Koll\'ar\\ large}}  \arrow[dashed]{dd}{??}
    \\
    \fbox{\Centerstack[c]{K\"ahler \\ hyperbolic}} \arrow{u}{(2)} \arrow[negated]{d}{{(a) \vphantom{1}}} \arrow{r}{(3)}
    &
    \fbox{\Centerstack[c]{SAK of \\ general type}} \arrow[negated]{l}{{(b) \vphantom{1}}} \arrow{r}{(4)} \arrow[swap,negated]{ul}{(c)}
    &
    \fbox{\Centerstack[c]{SAK}} \arrow[dashed]{dr}{?}  \arrow[negated]{l}{{(e) \vphantom{1}}} \arrow{ur}{(5)} 
    &
    \\
    \fbox{\Centerstack[c]{Real \\hyperbolic}}  \arrow{u}{(1)}
    & & &
    \fbox{\Centerstack[c]{Stein \\ universal \\ cover}} \arrow{uu}{(6)} 
    \arrow[negated]{ull}{{(f) \vphantom{1}}}
    \end{tikzcd}  
    \]
\end{thm}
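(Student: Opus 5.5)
The diagram contains six positive implications and six non-implications, and I will treat each arrow separately, each time reducing it to a known result or an explicit example.

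\textbf{Positive implications.} For (1), a real hyperbolic K\"ahler manifold has $\pi_2(M)=0$ and hyperbolic $\pi_1$, so $\Pi(M)=0$. Every K\"ahler form is therefore aspherical. Since the universal cover is aspherical with hyperbolic deck group, $\pi^*\omega$ admits a bounded primitive by Gromov's argument (bounded cohomology / linear isoperimetric inequality), i.e.\ $d(\mathrm{bounded})$ lifts. Equivalently, I would invoke \cite[Corollary~1.7]{CY2}: a SAK manifold with hyperbolic fundamental group is K\"ahler hyperbolic.

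For (2), I would cite Gromov \cite{Gr-kahler hyperbolic}. A nonconstant entire curve $\C\to M$ lifts to $\wt M$, and the $d$-bounded primitive together with an Ahlfors-type length–area argument forces it to be constant.

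For (3), a $d$-bounded form is exact on $\wt M$, hence aspherical by Lemma~\ref{lem: exact}, and Gromov shows K\"ahler hyperbolic manifolds are of general type.

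Implication (4) is tautological.

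For (5), suppose a compact positive dimensional subvariety $Z\subset\wt M$ existed. Then $\int_Z(\pi^*\omega)^{\dim Z}>0$, but $\pi^*\omega=d\alpha$ is exact on $\wt M$, so by Stokes on the (possibly singular, via resolution) compact cycle this integral vanishes, a contradiction.

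For (6), a Stein manifold contains no compact positive dimensional analytic subvarieties, since plurisubharmonic exhaustion functions are constant on them.

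\textbf{Non-implications via examples.}
\begin{enumerate}
\item For (a), take a compact quotient of the complex ball. It is K\"ahler hyperbolic, but it is not real hyperbolic in the sense of constant curvature $-1$ type geometry. Alternatively, a product of two curves of genus $\ge2$ is K\"ahler hyperbolic, while its fundamental group contains $\Z^2$ and so is not hyperbolic.
\item For (b) and (c), I would use the non-aspherical surfaces of Theorem~\ref{th: new construction} (or $SP^n(M_g)$ generic). These are SAK of general type. They are not K\"ahler hyperbolic: for (b) the natural attempt is to use the Hopf--Singer/$L^2$ vanishing, since Gromov's K\"ahler hyperbolic surfaces have $\chi=b_2^{(2)}>0$ constraints; $SP^2(M_g)$ should be tested against these, but I am not certain this works. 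For (c) they are also not Kobayashi hyperbolic. The candidate for (c) is $SP^n(M_g)$ together with the Abel--Jacobi map: its fibers $\mathbb P^k$ are empty for $n$ small generic, so instead I would use $SP^2(M_g)\times T$, which contains $\C$-images from the torus factor while remaining SAK of general type.
\item For (e), take a product with a torus, e.g.\ an abelian variety. It is aspherical, hence SAK, but has Kodaira dimension $0$.
\item For (d), a Kobayashi hyperbolic manifold with simply connected or finite $\pi_1$, e.g.\ a hyperbolic hypersurface in $\C P^3$, is its own universal cover and is compact, so it is not Koll\'ar large.
\item For (f), a product $M_g\times E$ with $E$ elliptic has universal cover $\mathbb D\times\C$, which is Stein. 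It is aspherical, hence SAK, but not of general type.
\end{enumerate}

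\textbf{Main obstacle.} The delicate point is (b): certifying that a particular SAK general type manifold is not K\"ahler hyperbolic. I would first try Theorem~\ref{th: new construction} with $N$ nontrivial finite, or the product $SP^2(M_g)\times M_h$. For this I would use that K\"ahler hyperbolic manifolds have nonvanishing middle $L^2$-Betti number and $\pi_1$ with no infinite amenable normal subgroups, and check whether either property fails; this requires an explicit computation of $\pi_1$ or the $L^2$-Euler characteristic.
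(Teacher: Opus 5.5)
The positive implications (1)--(6) are essentially the paper's arguments. Your examples for (a) (the product $M_g\times M_g$), (d), (e) and (f) also work. For (f), $M_g\times E$, with Stein universal cover $\mathbb{D}\times\mathbb{C}$, is a fine substitute for the paper's complex torus.

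Discard the ball-quotient suggestion for (a). With the paper's definition of real hyperbolic ($\pi_2=0$ and $\pi_1$ Gromov hyperbolic), a compact ball quotient \emph{is} real hyperbolic. Its universal cover is contractible, and a cocompact lattice in $PU(n,1)$ is hyperbolic.

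The genuine problems are (c) and (b).

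\textbf{(c) fails.} $SP^2(M_g)\times T$ is not of general type. Kodaira dimension is additive on products, so $\kappa(SP^2(M_g)\times T)=2+0=2<3$; the paper notes exactly this in the remark after Proposition~\ref{general}. You also cannot fall back on the surfaces you named first. You give no reason they carry entire curves, and for $SP^n(M_g)$ with $M_g$ very general the claim is false. In that case the Jacobian is simple, so the Abel--Jacobi image (a proper subvariety, since $n<g$) contains no translate of a positive-dimensional subtorus. It is therefore Kobayashi hyperbolic by Green's criterion (Bloch--Ochiai--Kawamata).

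What (c) needs is an SAK manifold of general type that contains an entire curve. The paper uses Di Cerbo's smooth toroidal compactifications $\overline{\mathbb{C}\mathcal{H}^2/\Gamma}$ that are of general type and admit metrics of non-positive sectional curvature. These are aspherical, hence SAK, and their boundary elliptic curves give non-constant maps $\mathbb{C}\to M$.

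\textbf{(b) is left unproved}, but the missing step is short. We have $\pi_1(SP^n(M_g))\cong H_1(M_g)\cong\mathbb{Z}^{2g}$, which is amenable. A K\"ahler hyperbolic manifold cannot have amenable fundamental group: Gromov's non-vanishing of the middle $L^2$-Betti number of $\wt M$ contradicts the Cheeger--Gromov vanishing of all $L^2$-Betti numbers when $\pi_1$ is infinite amenable.

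You yourself list ``no infinite amenable normal subgroups'' as a property of fundamental groups of K\"ahler hyperbolic manifolds. Since $\mathbb{Z}^{2g}$ is such a subgroup of itself, the generic $SP^2(M_g)$ with $g\ge 3$ already settles (b); no products or branched covers are needed. The Euler-characteristic test you allude to cannot decide it, because $\chi(SP^2(M_g))=(g-1)(2g-3)>0$ has exactly the sign Gromov's theorem predicts for K\"ahler hyperbolic surfaces.
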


\begin{proof} (1) and (2) are well-known, see~\cite{Gr-kahler hyperbolic} (and also~\cite[Corollary~4.2]{CY}), and (4) is obvious. 

\begin{enumerate} [(1)]
    \item [(3)] Note that a K\"ahler hyperbolic manifold is of general type~\cite{Gr-kahler hyperbolic}, and a $d$-bounded form on its universal Riemannian cover is exact by definition. 

    \item [(5)] Let $(M,J,\omega)$ be SAK. The proof that it is Koll\'ar large is a standard volume computation using Stokes's theorem and is well-known to specialists. Let $\pi\colon \wt M\to M$ be the universal covering, and assume that there exists a compact complex subspace $N\subset \wt M$ with $\dim_\R(N)=2n\ge 2$. Then the volume of $N$ with respect to the K\"ahler metric $\pi^*\omega$ is positive. However, the exactness of $\pi^*\omega$ (say $\pi^*\omega=d\omega'$ for a suitable $1$-form $\omega'$) implies that this volume is equal to
\[
\int_N\ \frac{1}{n!}\ (\pi^*\omega)^n=\int_N\ \frac{1}{n!}\ d(\omega'\land (\pi^*\omega)^{n-1}),
\]
and the latter is $0$ since $N$ is compact. This is a contradiction. 

    \item [(6)] If the universal cover is Stein, then it does not contain positive dimensional compact complex subspaces, so the manifold is Koll\'ar large by definition.
\end{enumerate}

For (e) and (f), note that the complex tori $\C^n/\Lambda$ with their standard K\"ahler structure are SAK (because they are aspherical) and have Stein universal cover $\C^n$, but they are not of general type. 

\begin{enumerate}[(a)]
    \item A projective curve $M_g$ of genus $g\ge 2$ is real hyperbolic and hence K\"ahler hyperbolic by (1). The product $M_g\times M_g$ is obviously K\"ahler hyperbolic, but $\pi_1(M_g\times M_g)$ contains a copy of $\Z^2$, and therefore, $\pi_1(M_g\times M_g)$ is not a hyperbolic group, see~\cite{Gr-hyperbolic grp}. Hence, $M_g\times M_g$ is not real hyperbolic.

    \item The $n$-th symmetric product $SP^n(M_g)$ of a generic complex curve $M_g$ of genus $g\ge 2n-1$ is SAK (\emph{cf}.~\eqref{eq: ex for SP} in Example~\ref{ex: sa and sak}), of general type~\cite[Page~39]{Abr}, and has amenable fundamental group $\pi_1(SP^n(M_g))\cong H_1(M_g)\cong\Z^{2g}$. But the fundamental group of a K\"ahler hyperbolic manifold cannot be amenable (for a proof, see~\cite[Theorem~6.7]{Ked}).

    \item A smooth toroidal compactification $\ov{\C \mathcal{H}^2/\Gamma}$ of finite volume is a closed K\"ahler surface containing a holomorphic elliptic curve, and hence an entire curve, and so it can never be Kobayashi hyperbolic. On the other hand,~\cite[Theorem~A]{DC} produces smooth toroidal compactifications of general type that admit Riemannian metrics of non-positive sectional curvature, and hence are SAK of general type.

    \item By a result of Siu, a very generic surface $X$ in $\C P^3$ having sufficiently large degree is Kobayashi hyperbolic, see~\cite{Siu} (and also~\cite[Corollary~1]{DEG}). Of course, $\pi_1(X)\cong\pi_1(\C P^3)\cong 0$ due to the Lefschetz hyperplane theorem. But the fundamental group of a Koll\'ar large manifold must be infinite.
\end{enumerate}

The implications (?) and (??) are \emph{predicted} by the Shafarevich conjecture.
\end{proof}

\begin{remark}
It is currently unknown whether any Koll\'ar large variety must have a Stein universal cover. Since symplectically aspherical K\"ahler manifolds constitute an important (and topologically tame) subclass of Koll\'ar large manifolds (\emph{cf}. implication (5) in Proposition~\ref{prop: other kahler}), a step towards settling the Shafarevich conjecture could be to first check it on symplectically aspherical K\"ahler manifolds (i.e., to check whether the implication (?) holds)! We note that several of these manifolds are known to have Stein universal covers. Examples include
\begin{enumerate}
    \item aspherical ones, such as complex tori, Hermitian locally symmetric spaces, Kodaira surfaces, etc., and
    \item non-aspherical ones, such as the symmetric products of curves $SP^n(M_g)$ for $g\ge 2n-1$ for $M_g$ generic (see~\cite[Proposition~5.5]{DCDJ}), and the smooth projective varieties $X^n$ constructed in Theorem~\ref{th: new construction}, \emph{provided} the corresponding aspherical varieties $Y^n$ have Stein universal cover (this is because an unramified covering of a Stein manifold is Stein).
\end{enumerate}
\end{remark}

\begin{remark}
    In~\cite{JZ,CX}, K\"ahler \emph{parabolic} or \emph{non-elliptic} manifolds were introduced as generalizations of K\"ahler hyperbolic manifolds. The former have a K\"ahler form whose pullback to the universal cover has $d$\emph{-sublinear growth}. Certainly, the class of SAK manifolds contains the class of these closed manifolds as well.
\end{remark}


We (the authors) do not know if the reverse implication of (5) in Proposition~\ref{prop: other kahler} holds in non-trivial cases, so we conclude this section with the following question.
\begin{question}\label{ques: kollar large}
 If $(M,J,\omega)$ is a Koll\'ar large manifold with $\Pi(M)\ne 0$, then is it true that $M$ admits a $J$-compatible K\"ahler form that is aspherical?
\end{question}

\section{The fundamental group and the realizability problem}\label{sec: fund grp}

The existence of an aspherical K\"ahler form on a closed smooth manifold severely constrains its fundamental group (see below). In this section, we show that each free abelian group that \emph{can be} realized as the fundamental group of a symplectically aspherical (smooth) projective variety with non-trivial second homotopy group \emph{is} indeed realized as such in complex dimension $2$.

We call a finitely presented discrete group $\Gamma$  an \emph{SA group} (resp. \emph{SAK group}) if $\Gamma$ is isomorphic to the fundamental group of a closed symplectically aspherical manifold (resp. symplectically aspherical projective variety). The class of SAK groups is closed under direct products and passage to finite-index subgroups (\emph{cf}.  Lemma~\ref{lem: sak properties}). The fundamental groups of closed K\"ahler manifolds (namely the \emph{K\"ahler groups}) and SA groups have been studied in detail in, for instance,~\cite{ABCKT} and~\cite{IKRT,KRT1}, respectively --- see also the references therein. Here, we study SAK groups.

If $\Gamma$ is an SAK group, then
\begin{enumerate}
\item $\Gamma$ must be infinite with exactly one end (so $\Gamma$ cannot be a free product);
\item the first Betti number $b_1(\Gamma')$ must be even for any finite index subgroup $\Gamma'$ of $\Gamma$ (so $b_1(\Gamma)$ cannot be odd);
\item if $b_1(\Gamma) \ne 0$, then the second Betti number $b_2(\Gamma)$ must be non-zero as well;
\item if $\Gamma\not\cong\pi_1(M_g)$ for a closed orientable surface $M_g$ of genus $g > 0$, then there must exist $x\in H^2(\Gamma;\R)$ such that $x^2\ne 0$ (so $\cd_{\R}(\Gamma)\ne 3$); 
\item if $\Gamma\not\cong\Z^2$ is a finitely generated abelian group, then $\r(\Gamma)$ must be an even integer $\ge 4$.
\end{enumerate}

\begin{remark}\label{rem: proper inclusion}
The class of SAK groups is strictly smaller than that of both SA groups and K\"ahler groups. Indeed, $\Z^{2m-1}\oplus G$ is an SA group for any finite abelian group $G$ due to~\cite{KRT1}, but not SAK, and all finite groups are K\"ahler due to the work of Serre (see~\cite[Example 1.11]{ABCKT}), but they are also not SAK.     
\end{remark}

The free abelian groups $\Z^{2n}$ for $n\ge 1$ are SAK groups: their corresponding smooth projective varieties are the complex tori $\C^n/\Lambda$. The ``realizability problem'' of the fundamental group becomes interesting when one asks whether an SAK group is the fundamental group of a symplectically aspherical projective variety with \emph{non-trivial} second homotopy group! So, it makes sense to study such SAK groups separately. For concreteness, let us say that
\begin{enumerate}
    \item $\Gamma\in \A'_n$ if there exists a symplectically aspherical projective $n$-variety $M$ with $\pi_1(M)=\Gamma$ and $\pi_2(M)= 0$;
    \item  $\Gamma\in \B'_n$ if there exists a symplectically aspherical projective $n$-variety $M$ with $\pi_1(M)=\Gamma$ and $\pi_2(M)\ne 0$.
\end{enumerate}
Without the prime in the notation, the corresponding subclasses $\A_n$ and $\B_n$ of SA groups were studied in~\cite{IKRT}. Finally, let $\A'=\bigcup_{n\ge 1}\A'_n$ and $\B'=\bigcup_{n\ge 1}\B'_n$.

We first note that the realizability problems of the fundamental group can be reduced to complex dimension $2$.

\begin{prop}\label{prop: dimension shifting}
If $\Gamma$ is the fundamental group of a symplectically aspherical projective $n$-variety $(M,J,\omega)$ for some $n \ge 3$, then $\Gamma$ is the fundamental group of a symplectically aspherical projective $(n-i)$-variety for each $1 \le i \le n-2$.
\end{prop}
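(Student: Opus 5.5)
The plan is to use the Lefschetz hyperplane theorem together with Lemma~\ref{lem: sak properties}, which says complex submanifolds of symplectically aspherical K\"ahler manifolds are again symplectically aspherical K\"ahler. Since $M$ is a smooth projective $n$-variety, we fix a projective embedding $M\hookrightarrow \C P^N$ and use Bertini's theorem to choose a smooth hyperplane section $H_1=M\cap \C P^{N-1}$. Then $H_1$ is a smooth projective $(n-1)$-variety and a complex submanifold of $M$.

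We first check that $H_1$ has the right fundamental group. By the Lefschetz hyperplane theorem, the inclusion $H_1\hookrightarrow M$ induces isomorphisms on $\pi_k$ for $k<n-1$ and a surjection for $k=n-1$. Since $n\ge 3$, we have $n-1\ge 2>1$, so $\pi_1(H_1)\cong\pi_1(M)=\Gamma$.

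Next we transfer the aspherical K\"ahler form. One subtlety is that $\omega$ need not be the Fubini--Study restriction. Lemma~\ref{lem: sak properties} nevertheless applies directly: $H_1$ is a $J$-complex submanifold, so $\iota^*\omega$ is a K\"ahler form on $H_1$ compatible with the induced complex structure. It is aspherical because any map $S^2\to H_1$ composed with $\iota$ is a map $S^2\to M$, so $\int_{S^2} f^*\iota^*\omega=0$. Hence $H_1$ is a symplectically aspherical projective $(n-1)$-variety with fundamental group $\Gamma$.

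Finally, we iterate. We take successive smooth hyperplane sections $H_{i}\subset H_{i-1}$, each of complex dimension $n-i$. At each step the Lefschetz theorem gives $\pi_1(H_i)\cong\pi_1(H_{i-1})$ provided $\dim_\C H_{i-1}\ge 3$, i.e.\ $n-(i-1)\ge 3$, equivalently $i\le n-2$. This yields the full range $1\le i\le n-2$. The induction stops at complex dimension $2$, because a curve section of a surface only receives a surjection on $\pi_1$, not an isomorphism.

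There is no real obstacle. The only points needing care are applying Bertini's theorem to get smoothness (and connectedness, which holds for ample sections in dimension $\ge 2$) at each stage, and keeping the correct dimension bound in the Lefschetz theorem.
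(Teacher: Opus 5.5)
Your proposal is correct and follows essentially the same route as the paper: take a smooth hyperplane section, use the Lefschetz hyperplane theorem to get $\pi_1(H_1)\cong\Gamma$ (since $n\ge 3$), restrict $\omega$ to get an aspherical K\"ahler form on that complex submanifold, and iterate down to complex dimension $2$. The only difference is that the paper cuts $M$ by the zero locus of a generic section of $L^{\otimes k}$ with $c_1(L)$ lifting $[\omega/2\pi]$, whereas you use an arbitrary projective embedding; since only the restriction $\iota^*\omega$ matters, your choice works equally well and does not need $[\omega]$ to be integral.
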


\begin{proof}
    In view of induction, it suffices to prove that $\Gamma$ can be realized as the fundamental group of a closed symplectically aspherical projective $(n-1)$-variety $N$. To that end, take a positive line bundle $L$ over $M$, which has a connection with curvature $-i\omega$, such that its first Chern class $c_1(L)$ lifts $[\omega/2\pi]$ to an integral class. Indeed, the K\"ahler form represents an integral cohomology class because $M$ is a projective variety. It is a standard and classical result in complex geometry that for a sufficiently large positive integer $k$, the tensor product $L^{\otimes k}$ has holomorphic sections, and the zero set of a generic section of $L^{\otimes k}$ is a hyperplane section $N$ of $M$ that realizes the Poincar\'e dual of $k\cdot c_1(L)$. If $\phi\colon N\hookrightarrow M$ denotes the inclusion, then $\phi^*\omega$ is an aspherical K\"ahler form on $N$ by definition. By the Lefschetz hyperplane theorem, $\phi$ is a $(n-2)$-equivalence and so, $\pi_1(N)\cong\pi_1(M)=\Gamma$.
\end{proof}

\begin{cor}\label{cor: inclusion of SAK groups}
    We have the inclusions $\A'_{i+1}\subset \A'_{i}$ for each $i\ge 3$, and $\B'_{i+1}\subset \B'_{i}$ for each $i\ge 2$. Also, $\A'_3\subset \A'_2\cup \B'_2$.
\end{cor}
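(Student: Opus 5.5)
The plan is to apply Proposition~\ref{prop: dimension shifting} and keep track of what a hyperplane section does to $\pi_2$. Fix $\Gamma$ in one of the classes and let $M$ be a symplectically aspherical projective $(i+1)$-variety realizing it. As in the proof of Proposition~\ref{prop: dimension shifting}, let $N\subset M$ be a smooth hyperplane section cut out by a generic section of $L^{\otimes k}$. Then $\phi^*\omega$ is an aspherical K\"ahler form on $N$, and $N$ is a projective $i$-variety. By the Lefschetz hyperplane theorem, the inclusion $\phi\colon N\hookrightarrow M$ is an $i$-equivalence: $\phi_*$ is an isomorphism on $\pi_j$ for $j<i$ and surjective on $\pi_i$. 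Everything therefore comes down to comparing $\pi_2(N)$ with $\pi_2(M)$ for the relevant values of $i$.

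For $\A'_{i+1}\subset\A'_i$ with $i\ge3$, we have $2<i$, so $\pi_2(N)\cong\pi_2(M)=0$ and $\pi_1(N)\cong\Gamma$. Hence $\Gamma\in\A'_i$. For $\B'_{i+1}\subset\B'_i$ with $i\ge3$, the same isomorphism gives $\pi_2(N)\cong\pi_2(M)\neq0$. The remaining case is $i=2$, where $M$ is a threefold with $\pi_2(M)\ne0$ and $N$ is a surface. Now $\phi$ is only a $2$-equivalence, so $\pi_1(N)\cong\pi_1(M)$ still holds, while $\phi_*\colon\pi_2(N)\to\pi_2(M)$ is merely surjective. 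A surjection onto a non-zero group forces $\pi_2(N)\neq0$, so $\Gamma\in\B'_2$.

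For $\A'_3\subset\A'_2\cup\B'_2$, we apply the same construction to a threefold $M$ with $\pi_2(M)=0$. The surface $N$ is a symplectically aspherical projective surface with $\pi_1(N)\cong\Gamma$. Surjectivity of $\phi_*$ onto $\pi_2(M)=0$ tells us nothing about $\pi_2(N)$, which may or may not vanish. According as $\pi_2(N)=0$ or $\pi_2(N)\neq0$, we get $\Gamma\in\A'_2$ or $\Gamma\in\B'_2$, and this is exactly the stated union.

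The only delicate step is pinning down the Lefschetz connectivity index precisely. The paper's proof of Proposition~\ref{prop: dimension shifting} writes it as an $(n-2)$-equivalence; for the argument here I would use the standard statement that $(M,N)$ is $(n-1)$-connected when $\dim_\C M=n$. This is exactly what makes $\pi_2$ an isomorphism once $i\ge3$ and only a surjection when $i=2$, and it is also why the $\A'$ chain stops at $i\ge3$ while the $\B'$ chain extends down to $i=2$.
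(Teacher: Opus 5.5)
Your proposal is correct and is essentially the paper's proof. Both take the hyperplane section $N$ from Proposition~\ref{prop: dimension shifting}. When $i\ge 3$, $N$ has the same $\pi_1$ and $\pi_2$ as $M$. For a threefold, one keeps $\pi_1(N)\cong\pi_1(M)$ and uses that $\pi_2(N)\to\pi_2(M)$ is onto. The paper's ``$(n-2)$-equivalence'' is the same connectivity as your $(n-1)$-connected pair, written in the other indexing convention. Your surjectivity statement is also the accurate form of the paper's remark that $\pi_2(M^3)$ is isomorphic to a subgroup of $\pi_2(N^2)$: Lefschetz gives a quotient, and a quotient is all that is needed to conclude $\pi_2(N)\neq 0$.
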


\begin{proof}
Let $M$ be a symplectically aspherical projective $(i+1)$-variety such that $\pi_1(M)=\Gamma$. Taking $n=i+1$ in the notations of the proof of Proposition~\ref{prop: dimension shifting}, we get an $(i-1)$-equivalence $\phi:N^{i}\to M^{i+1}$. If $i \ge 3$, then $\pi_j(N) \cong \pi_j(M)$ for $j\in\{1,2\}$. Thus, $\A'_{i+1}\subset \A'_{i}$ and $\B'_{i+1}\subset \B'_{i}$ for $i\ge 3$. The other two inclusions follow upon observing that $\pi_1(N^2)\cong \pi_1(M^3)$, and that $\pi_2(M^3)$ is isomorphic to a subgroup of $\pi_2(N^2)$. 
\end{proof}

Next, we check which \emph{free abelian} groups belong to class $\B'$. Note that $\Z^2\not\in \B'$; this can be seen using~\eqref{eq: cd}.  
Thus, $\A'\not\subset\B'$, but it is unclear whether $\B'\subset \A'$.

\begin{theorem}\label{th: main1}
Each free abelian group of even rank $\ge 4$ is realized as the fundamental group of a closed symplectically aspherical smooth projective surface of general type that has non-trivial real spherical homology classes. 
\end{theorem}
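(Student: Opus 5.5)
We will realize $\Z^{2m}$, $m\ge 2$, by a smooth ample divisor in an abelian variety. Fix $m\ge 2$ and let $A=\C^m/\Lambda$ be an abelian variety of dimension $m$, so $\pi_1(A)\cong\Z^{2m}$ and $A$ is aspherical. The plan then splits into the two cases $m=2$ and $m\ge 3$.

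\textbf{Case $m=2$.} Here Theorem~\ref{th: new construction} with $n=2$ applies directly. Choose an ample line bundle $L$ on $A$ and a smooth divisor $B\in|kL|$ with $k\ge 2$; this is possible by Bertini once $L$ is sufficiently ample. Let $X\to A$ be the $k$-cyclic cover branched along $B$. Theorem~\ref{th: new construction} says $X$ is a symplectically aspherical smooth projective surface that is not aspherical. Its proof shows $K_X$ is ample, so $X$ is of general type; alternatively, this follows from Proposition~\ref{general}.

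The remaining issue, and the main obstacle, is $\pi_1(X)$. By \eqref{eq: pardini} it is a central extension of $\Z^4$ by a finite group $N$, and we need $N=1$. For cyclic covers branched along a smooth ample divisor, the known computation (Catanese, Pardini--Tovena~\cite{PT}) identifies $N$ with a quotient of $\Z/k$ governed by the divisibility of $B$ in $\mathrm{NS}$. I would therefore first try to arrange that $B$ is primitive-ish, i.e.\ that $L$ is chosen so the relevant class is not divisible, forcing $N=1$. If that bookkeeping fails, the cleaner fallback is to avoid covers and use the case below, which gives a divisor of dimension $\ge2$ only when $m\ge3$. For $m=2$ a further fallback is to take $X$ as above and pass to the corresponding cover of $A$ via a finite-index subgroup argument; note that finite-index subgroups of $\Z^4$ are again $\Z^4$, and Lemma~\ref{lem: sak properties} keeps the SAK property.

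\textbf{Case $m\ge 3$.} Take a smooth very ample divisor $D\subset A$ and then iterate hyperplane sections, as in Proposition~\ref{prop: dimension shifting}, down to a smooth surface $S=A\cap H_1\cap\dots\cap H_{m-2}$, a complete intersection of ample divisors. The Lefschetz hyperplane theorem gives $\pi_1(S)\cong\Z^{2m}$. The restriction of the flat K\"ahler form of $A$ is aspherical on $S$, exactly as in the proof of Proposition~\ref{prop: dimension shifting}. Adjunction gives $K_S=(\sum H_i)|_S$, which is ample, so $S$ is of general type.

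For nontriviality of $\Pi(S)$: if $\Pi(S)=0$, then $H_2(S;\R)\to H_2(A;\R)$ would be injective. However, $b_2(S)$ grows with the degrees of the $H_i$, by Noether's formula and the fact that $c_2(S)$ is large, while $b_2(A)$ is fixed. Hence for sufficiently ample $H_i$ the kernel of $H_2(S;\R)\to H_2(A;\R)$ is nonzero. This kernel consists of spherical classes, because $\wt S\to S$ is the pullback of $\C^m\to A$ and the Cartan--Leray spectral sequence shows that classes killed in $H_2(\Z^{2m})$ come from $H_2(\wt S)=\pi_2(S)$. So $\Pi(S)\ne0$.

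Since this argument also covers $m=2$ when $S=X$ is the cyclic cover (any kernel of $H_2(X)\to H_2(A)$ is spherical once $\pi_1(X)=\Z^4$), the only genuinely delicate step in the whole proof is verifying $N=1$ in the surface case.
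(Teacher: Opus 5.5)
\textbf{The gap is rank $4$.} Your case $m\ge 3$ is correct and takes a different route from the paper. The paper uses $SP^2(M_g)$ for a generic curve, its Abel--Jacobi embedding, and the explicit spherical class from \cite{DCDJ}. You instead use complete-intersection surfaces $S$ in an abelian $m$-fold: Lefschetz gives $\pi_1(S)\cong\Z^{2m}$, adjunction gives $K_S$ ample, and Hopf's theorem plus $b_2(S)=c_2(S)+4m-2>\binom{2m}{2}=b_2(A)$ (for sufficiently positive $H_i$) gives $\Pi(S)\ne0$. This is self-contained and needs no genericity. For $m=3$, the smooth theta divisor of a non-hyperelliptic Jacobian is exactly the paper's $SP^2(M_3)$, and your count gives $b_2=16>15$.

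For rank $4$, you never prove $\pi_1(X)\cong\Z^4$, and neither proposed route does it. Making $B$ ``primitive'' is impossible, since $B\in|kL|$ is divisible by $k$ in $\mathrm{NS}(A)$ by construction. No divisibility bookkeeping is needed anyway: the paper invokes Pardini--Tovena \cite[Corollary~3.4]{PT}, which gives $N=1$ outright for the double cover of an abelian surface branched along a smooth ample divisor. Your other fallback, as written, takes a finite-index subgroup of $\pi_1(A)$, i.e.\ pulls $X$ back along an isogeny $A'\to A$. The resulting connected \'etale cover of $X$ has fundamental group equal to the full preimage of $\pi_1(A')$ in $\pi_1(X)$, which still contains $N$.

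\textbf{The fix.} What is missing is a finite-index subgroup of $\pi_1(X)$ itself that meets $N$ trivially. It exists by an elementary argument using only \eqref{eq: pardini}. In a central extension of $\Z^4$, the commutator map is bilinear with values in $N$. So if $g_1,\dots,g_4$ lift a basis of $\Z^4$ and $|N|$ divides $r$, then $H=\langle g_1^r,\dots,g_4^r\rangle$ is abelian, maps isomorphically onto $r\Z^4$, and has index $r^4|N|$. The corresponding finite \'etale cover $X'\to X$ then has:
\begin{enumerate}
\item $\pi_1(X')\cong\Z^4$;
\item the symplectically aspherical K\"ahler property, by Lemma~\ref{lem: sak properties};
\item ample canonical bundle $K_{X'}$, the pullback of $K_X$, so $X'$ is of general type;
\item $\Pi(X')\ne0$, since spheres lift to $X'$ and so $\Pi(X)$ is the image of $\Pi(X')$.
\end{enumerate}
In turn, $\Pi(X)\ne0$ by your own Hopf argument. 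Over $\R$ that argument is insensitive to the finite kernel $N$, because $H_2(\pi_1(X);\R)\cong H_2(A;\R)$. It only needs $b_2(X)=(k-1)B^2+6>6$, which follows from $e(X)=k\,e(A)-(k-1)e(B)=(k-1)B^2$ and $b_1(X)=b_3(X)=4$; you asserted this for $X$ but did not actually check it.

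With that patch the proposal is complete. Your Betti-number argument for $\Pi\ne0$ is also more elementary than the paper's appeal to a degeneration in moduli \cite[Corollary~3.2]{DCP}.
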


\begin{proof}
    For a free abelian group $\Z^{2g}$ of rank $2g\ge 6$, the symmetric square $SP^2(M_g)$ of a generic curve $M_g$ of genus $g\ge 3$ is the desired surface. Indeed, we have that $\pi_1(SP^2(M_g))\cong H_1(M_g)\cong\Z^{2g}$, and it is explained in~\cite{DCDJ} that $\Pi(SP^2(M_g))$ is infinite. Moreover, $SP^2(M_g)$ holomorphically embeds inside the Jacobian $\C^g/\Lambda$ via the Abel--Jacobi map $\mu_2\colon SP^2(M_g)\to \C^g/\Lambda$ for $g\ge 3$, see~\cite[Theorem~4.1]{DCDJ}. Thus, the pullback $\mu_2^*\omega_0$ of the standard  aspherical K\"ahler form $\omega_0$ on $\C^g/\Lambda$ is an aspherical K\"ahler form on $SP^2(M_g)$. That $SP^2(M_g)$ is of general type for $g\ge 3$ is well-known (see, for example,~\cite[Page~39]{Abr}).

    The above approach does not work for $g=2$: indeed, $SP^2(M_2)$, being diffeomorphic to the symplectic blow-up of $T^4$ at a point, is not minimal and hence not symplectically aspherical due to Corollary~\ref{cor: no spheres}. So for $\Z^4$, we proceed as follows. Consider a minimal surface of general type on the Severi line --- remarkably, such surfaces are completely classified in~\cite{Barja}. Each of these surfaces is the minimal resolution of a double cover of an abelian surface $A$ branched along an ample divisor $B$. When this divisor is \emph{smooth}, the double branch cover, say $\pi:X\to A$, is smooth and has an ample canonical divisor $K_{X}=\pi^* B$. For more details on this construction, see~\cite{Wang} (and also~\cite[Page~3]{DCP}). Therefore, proceeding as in the proof of Theorem~\ref{th: new construction}, we can find an aspherical K\"ahler class $\omega\in c_{1}(K_{X})$. Also, we deduce from~\cite[Corollary~3.4]{PT} that $\pi_1(X)\cong\pi_1(A)\cong\Z^4$ (indeed,  the finite group $N$ from the central extension~\eqref{eq: pardini} turns out to be trivial in this special case). Finally, as explained in~\cite[Corollary~3.2]{DCP}, surfaces constructed in this way have infinite $\Pi$, for there is a singular surface in the connected component of $X$ in the moduli space of the canonical models of surfaces.
\end{proof}

This implies that $\Z^{2n}\in \A'_{n}\cap \B'_{n}$ for all $n\ge 2$. Indeed, if $\Gamma \in \A'_\ell$ and $\Gamma'\in \B'_m$ for some $\ell,m\ge 1$, then $\Gamma\oplus\Gamma'\in \B'_{\ell+m}$, so we get $\Z^{2n}\in \B'_{n}$ for each $n \ge 3$ as well since $\Z^4\in \B'_2$ (\emph{cf}. Theorem~\ref{th: main1}) and $\Z^{2n-4}\in \A'_{n-2}$.


\begin{remark}
The projective varieties described in the proof of Theorem~\ref{th: main1} have Stein universal cover. For the surface $X$, see~\cite{Wang} for this fact, and for the symmetric squares $SP^2(M_g)$ for $g\ge 3$, see~\cite[Proposition~5.5]{DCDJ}.
\end{remark}

The following is then a direct consequence of Theorem~\ref{th: main1}.

\begin{cor}
     Each even integer $\ge 4$ is realized as the first Betti number of a closed symplectically aspherical smooth projective surface of general type that has non-trivial real spherical homology classes and a Stein universal cover. 
\end{cor}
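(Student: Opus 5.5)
The plan is to read this corollary off Theorem~\ref{th: main1} together with the remark that follows it. Fix an even integer $2g\ge 4$. Theorem~\ref{th: main1} gives a closed symplectically aspherical smooth projective surface $X$ of general type with $\pi_1(X)\cong\Z^{2g}$ and $\Pi(X)\ne 0$. Concretely, $X=SP^2(M_g)$ for a generic curve $M_g$ when $g\ge 3$, and $X$ is the smooth double cover of an abelian surface branched along a smooth ample divisor (the Severi-line surface) when $g=2$.

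The first step is to compute $b_1(X)$. Since $H_1(X;\Z)$ is the abelianization of $\pi_1(X)$, and $\pi_1(X)\cong\Z^{2g}$ is already abelian, we get $H_1(X;\Z)\cong\Z^{2g}$. Hence $b_1(X)=\dim_\R H_1(X;\R)=2g$. For $SP^2(M_g)$ one can also see this directly, since $H_1(SP^2(M_g))\cong H_1(M_g)$.

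The second step is to check that $X$ has Stein universal cover, which the remark after Theorem~\ref{th: main1} records for both families. For $SP^2(M_g)$ with $g\ge 3$ this is \cite[Proposition~5.5]{DCDJ}. For the double cover $X\to A$ it is \cite{Wang}. Alternatively, the universal cover is a finite-fibered or unramified cover related to the Stein space $\C^2$, in the spirit of the remark following Theorem~\ref{prop: other kahler}. The remaining properties (symplectic asphericity, general type, nontrivial real spherical homology) are exactly the conclusions of Theorem~\ref{th: main1}.

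There is no real obstacle here. The only point that needs care is the case $g=2$. There one must use the fact that the finite group $N$ in the extension~\eqref{eq: pardini} is trivial, so that $\pi_1(X)\cong\Z^4$ and not a finite extension of it; this is what \cite[Corollary~3.4]{PT} provides. Even if $N$ were nontrivial, it would be finite, and $b_1$ would still equal $4$ because rational homology is insensitive to finite central kernels via the transfer argument.
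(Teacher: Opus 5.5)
Your proposal is correct and follows the paper's route: the paper likewise reads the corollary off Theorem~\ref{th: main1}, with $b_1=2g$ because $\pi_1\cong\Z^{2g}$ is abelian, and takes the Stein universal covers from the subsequent remark, which cites \cite{Wang} and \cite[Proposition~5.5]{DCDJ}. Your ``alternative'' justification of Steinness is imprecise but not needed: for $SP^2(M_g)$ the universal cover is the preimage of $\mu_2(SP^2(M_g))$ in $\C^g$, a closed complex submanifold, while for the Severi-line surface it is a finite branched (not unramified) double cover of $\C^2$.
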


\begin{remark}
The question of whether $\Z^4\in\B_2$ was asked in~\cite{IKRT} and answered in the affirmative in~\cite{KRT1}. Unlike the examples of~\cite{KRT1} (for which it is unclear whether $\Pi$ is a non-trivial subspace), Theorem~\ref{th: main1} produces explicit examples with $\Pi$ infinite to show that $\Z^4\in\B_2'\subsetneq \B_2$.
\end{remark}

Inspired by the results of~\cite{KRT1}, that $\Z^k\oplus G$ are SA groups (in fact, in $\B$) for any finite abelian group $G$, we ask the question of whether $\Z^{2n}\oplus G$ is an SAK group (preferably in $\B')$  for each finite abelian group $G$. 

\begin{question}\label{ques: grp}
For each even integer $k\ge 4$ and finite abelian group $G$, does there exist a closed symplectically aspherical K\"ahler manifold $M$ with $\pi_2(M)\ne 0$ such that $\pi_1(M)\cong\Z^k\oplus G$?
\end{question}

In particular, it is not clear to us whether the fundamental group of a symplectically aspherical K\"ahler manifold in real dimensions $\ge 4$ can have non-trivial torsion. Note that Theorem~\ref{th: main1} answers Question~\ref{ques: grp} for trivial $G$. 

\section{Symplectically apherical manifolds and scalar curvature}\label{sec: psc}

In this section, we propose (and provide support towards) an extension of the following long-standing conjecture.

\begin{conjec}[Gromov--Lawson]\label{GLC}
A closed aspherical manifold cannot support a Riemannian metric of positive scalar curvature. 
\end{conjec}

We argue that it is very interesting to consider the following strengthening of Conjecture~\ref{GLC} in the realm of symplectic manifolds.

\begin{conjec}\label{DDJ}
A closed symplectically aspherical manifold cannot support a Riemannian metric of positive scalar curvature. 
\end{conjec}

%
%

The main idea behind such conjectures is that extra enriched structures on smooth manifolds usually prevent the positivity of their curvature. 

The first piece of evidence towards this conjecture comes from the theory of the Novikov conjecture. Indeed, the following may be known to experts.

\begin{thm}\label{th: c-symplectic psc}
A closed $c$-symplectically aspherical $2n$-manifold that has a spin finite cover cannot support a Riemannian metric of positive scalar curvature. In particular, Conjecture~\ref{DDJ} is true in the spin case.
\end{thm}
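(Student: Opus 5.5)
The plan is to use the higher $\hat A$-genus / Novikov-type obstruction coming from classes pulled back from $B\pi$, together with the fact that degree-two cohomology classes of groups satisfy the strong Novikov conjecture in the relevant form. First pass to the spin finite cover $p\colon M'\to M$. By the argument of Lemma~\ref{lem: sak properties} (or \cite[Proposition~2.1]{KRT2}), $p^*\omega$ is again a $c$-symplectic aspherical form on $M'$; it is $c$-symplectic because $p^*$ is injective on real cohomology for a finite cover. A positive scalar curvature metric on $M$ lifts to one on $M'$. Hence we may assume $M$ itself is spin and $c$-symplectically aspherical, and write $\Gamma=\pi_1(M)$.

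By Lemma~\ref{lem: essential}(1), $[\omega]=f^*u$ for some $u\in H^2(B\Gamma;\R)$, where $f\colon M\to B\Gamma$ is the classifying map, and $[\omega]^n\neq 0$. Consider the higher $\hat A$-genus
\[
\hat A_u(M)=\big\langle \hat A(M)\cup f^*u^n,[M]\big\rangle .
\]
Since $f^*u^n=[\omega]^n$ lies in top degree, only the degree-zero part of $\hat A(M)$, which is $1$, contributes. Therefore $\hat A_u(M)=\langle[\omega]^n,[M]\rangle\neq0$.

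It remains to show that this higher $\hat A$-genus obstructs positive scalar curvature. The key input is that higher signatures and higher $\hat A$-genera associated with products of degree-two classes of $B\Gamma$ are homotopy invariant and psc-obstructing. This is the theorem of Mathai, and of Hanke--Schick via enlargeability of the associated flat bundles, that the strong Novikov conjecture holds for the subring of $H^*(B\Gamma;\R)$ generated by $H^1$ and $H^2$; in the psc form, it states that $\langle \hat A(M)f^*c,[M]\rangle=0$ for such $c$ whenever a spin manifold carries a psc metric.

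Concretely, I would construct, for a rational multiple of $u$ represented by an integral class, an "almost flat" Hilbert module or line bundle twist. A $U(1)$-central extension of $\Gamma$ realizes $u$, and its projective representations give twisted Dirac operators whose index equals $\hat A_u(M)$ up to a nonzero constant. A Lichnerowicz argument then forces this index to vanish under psc, since the twisting curvature can be made arbitrarily small by rescaling. Equivalently, I could cite Hanke--Schick's result that $[M]$ maps to a nonzero class in $K_*(C^*_{\max}\Gamma)\otimes\R$ when a higher $\hat A$-genus built from $H^2$-classes is nonzero, and then apply Rosenberg's index obstruction.

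The main obstacle is this last step: justifying the Novikov/Rosenberg-type statement for degree-two classes with arbitrary $\Gamma$, rather than only for enlargeable or residually finite groups. I would lean on Mathai's twisted-$L^2$ index theorem, or on the Hanke--Schick infinite-dimensional almost-flat bundle argument, which handles exactly classes in the subring generated by $H^2(B\Gamma)$. The scaling by $\omega$ to an integral class is harmless because the nonvanishing condition is invariant under multiplying $u$ by positive rationals, and rational classes are dense near $u$ in the open condition $\langle u^n, f_*[M]\rangle\neq0$.
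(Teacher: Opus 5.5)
Your proposal is correct and is essentially the paper's argument: pass to the spin finite cover, write $[\omega]=f^*u$, and note that $\langle \hat A(M)\cup f^*u^n,[M]\rangle=\langle[\omega]^n,[M]\rangle\neq 0$, which is exactly the paper's pairing $\langle\gamma^n,\ch(f_*[M]_K)\rangle\neq 0$. Then apply Hanke--Schick's theorem for the subring generated by degree-two classes to get $A(f_*[M]_K)\neq 0$ in $K_*(C^*_{\max}\pi)$, which contradicts Rosenberg's index obstruction.

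The projective-representation sketch is unnecessary once you cite Hanke--Schick, and as written it is slightly off: the twisted index is a polynomial in the twisting parameter, and only its top coefficient, not the whole index, is a multiple of $\hat A_u(M)$. Your density remark for replacing $u$ by a rational class is correct and handles a detail the paper leaves implicit.
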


\begin{proof}
The proof is based on the fact that $2$-dimensional cohomology classes satisfy the Novikov conjecture (see~\cite{CGM,M,HS}), and on Rosenberg's index theorem for complex $K$-theory, which states that the index of a closed spin manifold $M$, with fundamental group $\pi$, that admits a Riemannian metric of positive scalar curvature is zero in $K_*(C^*_{\max}\pi)$. Here, $C^*_{\max}\pi$ denotes the maximal $C^*$-algebra of $\mathbb C\pi$.
Nowadays, Rosenberg's classical index theorem is stated in the literature for the $KO$-theory~\cite{RS}, which is a more delicate statement than for complex $K$-theory.
So for the complex version of it, we refer to Rosenberg's first publication on this subject~\cite{R}, where the index theorem was proven for complex $K$-theory, but was not stated in terms of an index map. The complex version can be easily derived from the real one by means of the complexification functor $KO\to KU$.

Let $M$ be a spin finite cover of our manifold. It suffices to show that $M$ does not admit a Riemannian metric of positive scalar curvature. Write $\pi:=\pi_1(M)$. Note that $M$ has an aspherical $c$-symplectic form $\omega$ (\emph{cf}. Lemma~\ref{lem: sak properties}). Since $\omega$ is aspherical, there exists $\gamma\in H^2(B\pi;\R)$ such that $f^*(\gamma)=[\omega]$ for a classifying map $f\colon M\to B\pi$. Since $\omega$ is also $c$-symplectic, we have $f^*(\gamma)^n=[\omega]^n\ne 0$ and so $f_*([M]_\R)\ne 0$ in $H_{2n}(B\pi;\R)$ by Poincar\'e duality. For these facts, we refer the reader to the review in Section~\ref{sec: prelim}. It then follows from the Chern character isomorphism 
\[
\ch\colon K_*(B\pi)\otimes\mathbb \R\to H_*(B\pi;\mathbb R)
\] 
that $\ch(f_*([M]_K))=f_*([M]_\R)\ne 0$. Let $\ch'$ be the homomorphism in the following commutative diagram:
\[
\begin{tikzcd}[row sep=2.5em]
    K_*(B\pi)\otimes\R \arrow[r,"\ch","\cong"']
    &
    H_*(B\pi;\R)
    \\
    K_*(M)\otimes\R \arrow{r}{\ch'} \arrow{u}{f_*\otimes\id_\R}
    &
    H_*(M;\R). \arrow[swap]{u}{f_*}
\end{tikzcd}
\]
In particular, $\ch'([M]_K)\ne 0$. Now, note that
\[
    \langle \gamma^n, \ch(f_*([M]_K))\rangle  =  \langle \gamma^n, f_*(\ch'([M]_K))\rangle =  \langle f^*(\gamma)^n, \ch'([M]_K)\rangle \ne 0
\]
 in view of the Poincar\'e duality on $M$.
But then we can use~\cite[Theorem~1.2]{HS} to deduce that $A(f_*([M]_K))\ne 0$, where
\[
A\colon K_*(B\pi)\to K_*(C^*_{\max}\pi)
\]
is the Baum--Connes assembly map.  Finally, we consider the index map 
\[
\mathcal{I}\colon K_*(M)\to K_*(C^*_{\max}\pi).
\]
Since $\mathcal{I}=A\circ f_*$, we get $\mathcal{I}_*([M]_K)\ne 0$. This gives an obstruction to metrics of positive scalar curvature on $M$ in view of Rosenberg's index theorem~\cite{R}.
\end{proof}

We believe that the hypothesis in Theorem~\ref{th: c-symplectic psc}, that the $c$-symplectically aspherical manifold admits a finite spin cover, can be weakened to the hypothesis that the universal cover of the manifold admits a spin structure (note that this will settle the $c$-symplectic case of the Gromov--Lawson Conjecture~\ref{GLC}). 
It was proven in~\cite[Proposition~4.2]{Dr} that in the case of \emph{2-tame fundamental groups}, the latter implies the former. Here, a group $\pi$ is called 2-tame if there exists a finite index subgroup $\Gamma$ of $\pi$ such that the homomorphism $H^2(\pi;\mathbb Z_2)\to H^2(\Gamma;\mathbb Z_2)$ induced by the inclusion $\Gamma\hookrightarrow \pi$ is trivial, see~\cite{Dr}. Since finitely generated abelian groups are 2-tame, we obtain the following.

\begin{cor}\label{cor: abelian fund grp}
A closed $c$-symplectically aspherical manifold having an abelian fundamental group and a spin universal cover cannot support a Riemannian metric of positive scalar curvature. 
\end{cor}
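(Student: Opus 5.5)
The plan is to reduce Corollary~\ref{cor: abelian fund grp} to Theorem~\ref{th: c-symplectic psc}. That theorem already gives the conclusion once we know $M$ has a spin finite cover. So the only task is to produce such a cover from two inputs: the spin structure on the universal cover $\wt M$, and the 2-tameness of $\pi=\pi_1(M)$.

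First, I will check that a finitely generated abelian group $\pi\cong\Z^k\oplus G$, with $G$ finite, is 2-tame. Take $\Gamma=2\Z^k\oplus 0$, a subgroup of finite index. The inclusion $\Gamma\hookrightarrow\pi$ factors as $2\Z^k\hookrightarrow\Z^k\hookrightarrow\pi$.
\begin{itemize}
\item By the K\"unneth formula, $H^*(\Z^k;\Z_2)$ is an exterior algebra on degree-one classes.
\item Multiplication by $2$ on $\Z^k$ induces zero on $H^1(-;\Z_2)=\mathrm{Hom}(-,\Z_2)$.
\item Hence it induces zero on all products of degree-one classes, and in particular on $H^2$.
\end{itemize}
It follows that the restriction $H^2(\pi;\Z_2)\to H^2(\Gamma;\Z_2)$ is trivial. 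Alternatively, one can simply cite that finitely generated abelian groups are 2-tame, as the paper notes.

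Next, I will recall the argument of \cite[Proposition~4.2]{Dr} in outline. Since $\wt M$ is spin, $w_2(\wt M)=0$. The Cartan--Leray (Serre) spectral sequence of $\wt M\to M\to B\pi$, whose fibre $\wt M$ is simply connected, gives an exact sequence
\[
0\to H^2(\pi;\Z_2)\xrightarrow{f^*} H^2(M;\Z_2)\to H^2(\wt M;\Z_2).
\]
So $w_2(M)=f^*(x)$ for some $x\in H^2(\pi;\Z_2)$. Let $M'\to M$ be the finite cover corresponding to $\Gamma$. By naturality, $w_2(M')$ is the pullback of $x|_\Gamma$, which is $0$ by 2-tameness.

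I also need $w_1(M')=0$. Here $M$ is orientable, since a $c$-symplectic manifold carries a volume class $[\omega]^n\ne0$. Hence $M'$ is orientable and, having $w_1=w_2=0$, is spin.

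Finally, $M$ is $c$-symplectically aspherical and has the spin finite cover $M'$, so Theorem~\ref{th: c-symplectic psc} rules out a metric of positive scalar curvature on $M$.

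The main obstacle is the identification of $w_2(M)$ as a class pulled back from $B\pi$. This rests on the low-degree exactness above, which holds because $H^1(\wt M;\Z_2)=0$. This is exactly the content borrowed from \cite{Dr}, so I would cite it rather than reprove it.
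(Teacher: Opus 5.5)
Your proposal is correct and takes the same route as the paper. It combines Theorem~\ref{th: c-symplectic psc} with \cite[Proposition~4.2]{Dr} (a spin universal cover together with a 2-tame fundamental group yields a spin finite cover) and the 2-tameness of finitely generated abelian groups. Your explicit check of 2-tameness via $\Gamma=2\Z^k\oplus 0$ and your sketch of the $w_2$ argument via the low-degree exact sequence are accurate expansions of what the paper leaves to the citation.
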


The next piece of evidence towards Conjecture~\ref{DDJ} comes from low-dimensional topology. Indeed, the conjecture is always true in dimensions $2$ and $4$.

\begin{thm}\label{th: 2 and 4}
Closed symplectically aspherical $2$- and $4$-manifolds cannot support Riemannian metrics of positive scalar curvature. 
\end{thm}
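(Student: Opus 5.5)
We treat dimensions $2$ and $4$ separately.

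\emph{Dimension $2$.} A closed symplectic $2$-manifold is an oriented surface $\Sigma$. If $\Sigma=S^2$, then $\Pi(S^2)=H_2(S^2;\R)\ne 0$, and the symplectic form integrates positively over the identity map, so it is not aspherical. Hence a symplectically aspherical surface has genus $g\ge 1$, so $\chi(\Sigma)\le 0$. By Gauss--Bonnet, $\int_\Sigma K\,dA=2\pi\chi(\Sigma)\le 0$, so no metric with positive scalar curvature $2K>0$ exists. (Alternatively, this case follows from inequality~\eqref{eq: cd} or from Lemma~\ref{lem: essential}.)

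\emph{Dimension $4$.} Let $(M,\omega)$ be a closed symplectically aspherical $4$-manifold. The plan rests on Taubes' theorem and the Seiberg--Witten obstruction to positive scalar curvature.

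First, suppose $b_2^+(M)\ge 2$. Taubes' theorem gives a nonvanishing Seiberg--Witten invariant for the canonical spin$^c$ structure. Positive scalar curvature forces all Seiberg--Witten invariants to vanish: the Weitzenb\"ock formula rules out irreducible solutions, and there are no reducibles for generic perturbations when $b_2^+\ge 1$. This is a contradiction.

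Now suppose $b_2^+(M)=1$ (note $b_2^+\ge1$ since $[\omega]^2\ne0$). Taubes' result still gives a nonzero invariant in the chamber determined by $\omega$, so the obstruction persists, but wall-crossing must be handled. The cleanest route is through known classifications. By the work of Liu and Ohta--Ono, a closed symplectic $4$-manifold admitting a metric of positive scalar curvature is rational or ruled, i.e., symplectically diffeomorphic to a blow-up of $\C P^2$ or of an $S^2$-bundle over a surface. Each of these contains a symplectically embedded $2$-sphere: a line or exceptional curve in the rational case, and a fiber or exceptional sphere in the ruled case (for a suitable symplectic form; see the next paragraph). This contradicts Corollary~\ref{cor: no spheres}.

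The main obstacle is that Corollary~\ref{cor: no spheres} applies to the given form $\omega$, whereas the classification only provides a symplectic sphere for \emph{some} deformation-equivalent form. I would resolve this with Lemma~\ref{lem: exact} and $\Pi(M)$. In any rational or ruled manifold, the fiber class, the line class, or an exceptional class is spherical; for a ruled surface over a surface of positive genus, the fiber class is spherical. Li--Liu's results then show that for every symplectic form on such a manifold, some such spherical class (the fiber, or an exceptional class in the minimal-model analysis) is represented by an $\omega$-symplectic sphere, and hence has positive $\omega$-area. This contradicts asphericity, since $\langle[\omega],\Pi(M)\rangle=0$. Citing the Li--Liu uniqueness of symplectic structures on ruled surfaces, or their statement that the fiber class has positive area, closes this gap.
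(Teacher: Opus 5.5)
Your argument is correct. It matches the paper's in dimension $2$ (Gauss--Bonnet) and in the $b_2^+\ge 2$ case (Taubes plus the Weitzenb\"ock argument), but for $b_2^+=1$ you close the argument differently.

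The paper first uses Corollary~\ref{cor: no spheres} to get minimality. It then applies the Liu/Ohta--Ono theorem for minimal symplectic $4$-manifolds to conclude that $M$ is rational or ruled, and finishes purely topologically: $\pi_1(M)$ is trivial or a surface group, so $\cd_\R(\pi_1(M))\le 2<4$, contradicting \eqref{eq: cd}.

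You instead look for a spherical class of positive area for the given form $\omega$. That pushes you into the mismatch between the given form and a deformation-equivalent one, and into citing Li--Liu/Lalonde--McDuff uniqueness and nonvanishing Gromov invariants of the fiber class. This can be made to work, and it yields a more geometric conclusion, namely an actual $\omega$-symplectic sphere. But the obstacle you flag is self-inflicted. The classification only needs to pin down the diffeomorphism type, indeed only $\pi_1$; Liu/Ohta--Ono give a diffeomorphism, not the "symplectic diffeomorphism" you state.

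The nonvanishing you want is also elementary. On a blow-up of an $S^2$-bundle over $\Sigma_g$ with $g\ge 1$, suppose $[\omega]$ vanished on the spherical fiber class and on all exceptional classes. Then $[\omega]$ would be a multiple of the Poincar\'e dual of the fiber, which is the pullback of the generator of $H^2(\Sigma_g;\R)$, and so $[\omega]^2=0$. In the rational case, $\Pi(M)=H_2(M;\R)$ by Hurewicz. This is exactly the content of \eqref{eq: cd}, which you already invoked in dimension $2$. It closes dimension $4$ as well, with no symplectic-sphere machinery.
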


\begin{proof}
In dimension $2$, a closed symplectically aspherical manifold must be aspherical. Thus, the conclusion follows from the classical Gauss--Bonnet formula. 

In dimension $4$, we employ low-dimensional topology techniques, in particular, Seiberg--Witten theory. First, recall that a celebrated result of Taubes~\cite[Main~Theorem]{Taubes} says that a symplectic $4$-manifold $M$ with $b^+_{2}(M)\geq 2$ has non-vanishing Seiberg--Witten invariants. Under these assumptions, a Lichnerowicz-type argument implies that $M$ cannot support a Riemannian metric with positive scalar curvature. So, it only remains to study the case $b^+_{2}(M)=1$. Observe that, as noted in Corollary~\ref{cor: no spheres}, a symplectically aspherical $4$-manifold $M$ cannot have a symplectically embedded sphere, which implies that $M$ is minimal. Due to the results of Liu and Ohta--Ono (see, for example,~\cite[Corollary 1.4]{McDuff}), a minimal symplectic $4$-manifold $M$ admits a metric of positive scalar curvature if and only if it is either rational or ruled. Now, if $M$ is rational, then it is simply connected, and if $M$ is ruled, then its fundamental group is a surface group. In any case, we get $\cd_\R(\pi_1(M))<4$, which contradicts~\eqref{eq: cd} since $M$ is symplectically aspherical.
\end{proof}

Finally, we prove a strengthening of Conjecture~\ref{DDJ} for the scalar curvature associated with a K\"ahler metric in each complex dimension $\ge 2$.

\begin{thm}\label{th: asphericalK}
A smooth projective $n$-variety $M$ with a birational morphism onto a symplectically aspherical smooth projective $n$-variety $N$ cannot support a K\"ahler metric of positive scalar curvature. 
\end{thm}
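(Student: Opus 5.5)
We would argue by contradiction, using the known characterization of projective manifolds carrying Kähler metrics of positive scalar curvature. Suppose $M$ admits a Kähler metric $g$ with Kähler form $\eta$ and positive scalar curvature. The scalar curvature of a Kähler metric is, up to a positive constant, the trace of the Ricci form, so
\[
\int_M c_1(M)\wedge[\eta]^{n-1} \;=\; C\int_M s_g\,\eta^n>0 ,
\]
which says $K_M\cdot[\eta]^{n-1}<0$. By the result of Heier--Wong (via Boucksom--Demailly--P\u{a}un--Peternell, pseudo-effectivity of $K_M$ is detected by movable curves), this forces $K_M$ to be non-pseudo-effective. Hence $M$ is uniruled: through a general point of $M$ passes a rational curve.

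Next we push these rational curves to $N$. Let $\phi\colon M\to N$ be the birational morphism, and take a rational curve $C\subset M$ through a general point, chosen outside the exceptional locus of $\phi$. Then $\phi|_C$ is non-constant, so $\phi(C)$ is a rational curve in $N$, the image of a non-constant holomorphic map $h\colon \C P^1\to N$. Since $N$ is symplectically aspherical, it carries an aspherical Kähler form $\omega$ (this is how we read the hypothesis, as in the definition of SAK). The pullback $h^*\omega$ is a nonnegative $(1,1)$-form that is positive where $h$ is immersive. Therefore
\[
\int_{\C P^1}h^*\omega>0 ,
\]
contradicting asphericity. Equivalently, $h$ is a symplectic map from a closed surface with finite $\pi_1$, which Lemma~\ref{lem: exact}(3) rules out; or $N$ would fail to be Kollár large, contradicting implication (5) of Theorem~\ref{prop: other kahler}.

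The main obstacle is the first step, from positive Kähler scalar curvature to uniruledness. This rests on deep results: BDPP duality together with Miyaoka--Mori. We would cite these rather than reprove them. In dimension $2$ one could instead use the classical result (Yau) that such surfaces have Kodaira dimension $-\infty$ and are therefore ruled.

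A minor point is ensuring the rational curve is not contracted by $\phi$. This holds because uniruledness provides rational curves through a general point, and the exceptional locus is a proper closed subset.
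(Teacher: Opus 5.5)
Your proof is correct, but it runs in the opposite direction from the paper's.

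The paper's proof starts from $N$. Since $N$ carries an aspherical K\"ahler form, it is Koll\'ar large and contains no rational curves. Mori's cone theorem then makes $K_N$ nef. The ramification formula $K_M=\phi^*K_N+R$, with $R$ effective, makes $K_M$ pseudo-effective. Finally, the same identity you write down,
\[
\int_M s\,\eta^n = C\,c_1(M)\cdot[\eta]^{n-1} = -C\,K_M\cdot[\eta]^{n-1}\le 0,
\]
rules out positive scalar curvature.

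You start from $M$ instead. Positive total scalar curvature makes $K_M$ non-pseudo-effective; this step is elementary, since a pseudo-effective class is represented by a closed positive current and so pairs nonnegatively with $[\eta]^{n-1}$. BDPP (Miyaoka--Mori) then gives uniruledness. A rational curve through a point off the exceptional locus of $\phi$ maps non-constantly into $N$, which contradicts asphericity.

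The trade-off is as follows. Your deep input, BDPP's characterization of uniruledness, is heavier than the cone theorem the paper uses. In exchange, you never need the canonical bundle formula for $\phi$ or any information about $K_N$, and you only have to exhibit a single rational curve in $N$. The paper's route gives additional structural information along the way: $K_N$ is nef and $K_M$ is pseudo-effective.

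One wording point: ``chosen outside the exceptional locus'' should mean that the curve passes through a point outside that locus, not that it avoids it entirely. That weaker property is all your argument needs, and it is what your final paragraph correctly uses.
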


\begin{proof}
The proof of this statement follows closely the proof of Theorem~8.2 from \cite{DCDJ}. First, since $N$ is symplectically aspherical K\"ahler, its universal cover has no positive dimensional compact complex subvarieties (\emph{cf.} (5) in Proposition~\ref{prop: other kahler}). Thus, $N$ cannot support rational curves, and so by Mori's cone theorem~\cite[Theorem~1.24]{Mori}, its canonical line bundle $K_N$ is nef. From here onwards, the proof is the same as in our previous theorem,~\cite[Theorem~8.2]{DCDJ}. In detail, it follows that $K_N$ (and hence $K_M$, because $K_M=\pi^*K_N+R$ for a birational morphism $\pi\colon M\to N$ and an effective divisor $R$ on $M$) is pseudo-effective and is therefore equipped with
a singular Hermitian metric whose curvature is a closed positive current. It can then be shown using standard curvature computations that this obstructs the existence of a compatible K\"ahler metric of positive scalar curvature on $N$ (and similarly on $M$). 
\end{proof}

\begin{ex}\label{ex: no psc}
    Consider the symplectically aspherical K\"ahler symmetric products $SP^n(M_g)$ for $g\ge 2n-1\ge 3$. Theorem~\ref{th: asphericalK} implies that no such $SP^n(M_g)$ admits a K\"ahler metric of positive scalar curvature. For $n-g$ odd, since $SP^n(M_g)$ has a spin finite cover (see~\cite[Theorem~9.12]{DCDJ}), there are no such Riemannian metrics as well by Theorem~\ref{th: c-symplectic psc}.
\end{ex}
Since the universal cover of a Koll\'ar large smooth projective variety does not contain positive dimensional compact complex subspaces~\cite[Proposition~2.12]{Kollar1}, the above proof of Theorem~\ref{th: asphericalK} recovers the following fact.

\begin{cor}
If $M$ is a Koll\'ar large smooth projective variety (in particular, if $M$ is a smooth projective variety having a Stein universal cover), then $M$ cannot support a K\"ahler metric of positive scalar curvature.
\end{cor}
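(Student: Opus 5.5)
The plan is to reduce the statement to the proof of Theorem~\ref{th: asphericalK}. Koll\'ar largeness is exactly the input that proof uses. Let $M$ be a Koll\'ar large smooth projective $n$-variety with universal covering $\pi\colon\wt M\to M$. By definition, or by~\cite[Proposition~2.12]{Kollar1}, $\wt M$ contains no positive dimensional compact complex subspaces.

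\emph{Step 1: $M$ has no rational curves.} Suppose $h\colon \C P^1\to M$ is a non-constant holomorphic map. Since $\C P^1$ is simply connected, $h$ lifts to a holomorphic map $\wt h\colon\C P^1\to\wt M$. Its image is then a compact one-dimensional complex subspace of $\wt M$ (by Remmert's proper mapping theorem), which contradicts Koll\'ar largeness.

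\emph{Step 2: $K_M$ is nef.} Because $M$ contains no rational curves, Mori's cone theorem~\cite[Theorem~1.24]{Mori} shows that the $K_M$-negative part of the cone of curves is empty. Hence $K_M\cdot C\ge 0$ for every curve $C$, that is, $K_M$ is nef, exactly as in the proof of Theorem~\ref{th: asphericalK}.

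\emph{Step 3: conclude.} A nef line bundle is pseudo-effective, so $K_M$ carries a singular Hermitian metric whose curvature current is closed and positive. We then run the curvature argument of~\cite[Theorem~8.2]{DCDJ}. Suppose $\omega$ is a K\"ahler metric with scalar curvature $s>0$. The total scalar curvature $\int_M s\,\omega^n$ is a positive multiple of $c_1(M)\cdot[\omega]^{n-1}=-K_M\cdot[\omega]^{n-1}$. Pairing a pseudo-effective class with $[\omega]^{n-1}$ gives a non-negative number, so $\int_M s\,\omega^n\le 0$, a contradiction.

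The parenthetical case follows from (6) in Theorem~\ref{prop: other kahler}, since a Stein universal cover contains no positive dimensional compact complex subspaces. The only genuinely delicate input is the positivity fact $K_M\cdot[\omega]^{n-1}\ge 0$ for pseudo-effective $K_M$. We would invoke it directly from~\cite{DCDJ}, or argue via approximating nef classes by ample ones, for which the intersection number is clearly positive, and passing to the limit.
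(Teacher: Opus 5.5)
Your proposal is correct and follows the paper's route: Koll\'ar largeness rules out rational curves (by lifting them to $\wt M$), Mori's cone theorem then makes $K_M$ nef, and the curvature argument from the proof of Theorem~\ref{th: asphericalK} (via \cite[Theorem~8.2]{DCDJ}) finishes the proof. Your explicit last step, comparing $\int_M s\,\omega^n = C\, c_1(M)\cdot[\omega]^{n-1}$ with $C>0$ against $K_M\cdot[\omega]^{n-1}\ge 0$, is a valid concrete form of the paper's ``standard curvature computations''. Since $K_M$ is already nef, you do not even need the singular-metric detour.
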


%
%
%
%
%
%

\begin{remark}\label{ex: not max mac dim}
In~\cite[Theorem~8.2]{DCDJ}, we proved Theorem~\ref{th: asphericalK} only in the special case when $N$ is an aspherical smooth projective variety (so a strengthening of Conjecture~\ref{GLC} in the K\"ahler setting). There, we also concluded for Gromov's \emph{macroscopic dimension} of (the universal Riemannian cover of) $M$ that $\dim_{mc}\wt M=2n$, using the fact that $\dim_{mc}\wt N=2n$ due to the asphericity of $N$. Here, $\dim_{mc}\wt M$ is the least non-negative integer $k$ for which there exists a $k$-dimensional simplicial complex $K$ and a continuous proper map $f\colon \wt M\to K$ such that $\text{diam}(f^{-1}(y))<C$ for some fixed $C>0$ and all $y\in K$, see~\cite{Gr-mac dim,Dr13}. However, as explained in~\cite[Section~10.1]{DCDJ}, the symplectically aspherical K\"ahler manifolds $SP^n(M_g)$, for $g\ge 2n-1$, need not have even one less than the maximum macroscopic dimension. Hence, it is \emph{not} true that a variety $M$ as in Theorem~\ref{th: asphericalK} has (one less than the) maximum macroscopic dimension $\dim_{mc}$. Therefore, even after \emph{assuming} Gromov's conjecture on the falling of the macroscopic dimension of manifolds with positive scalar curvature~\cite{Gr-mac dim}, 
it is unclear if there are any obvious obstructions to the existence of \emph{Riemannian} metrics of positive scalar curvature on $M$ in general. 
\end{remark}

On the other hand, we do have the following conclusion for the \emph{modified macroscopic dimension} $\dim_{MC}$, which, for the universal cover $\wt M$ of a closed Riemannian manifold $M$, is defined as the least non-negative integer $k$ for which there exists a $k$-dimensional simplicial complex $K$ and a map $f\colon \wt M\to K$ as in the definition of $\dim_{mc}$ above that is also Lipschitz, see~\cite{Dr11a}.

\begin{prop}
    If $M$ and $N$ are as in Theorem~\ref{th: asphericalK}, and if $\pi_1(N)$ is amenable, then $\dim_{MC}\wt M=\dim_{MC}\wt N=2n$.
\end{prop}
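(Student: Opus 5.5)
Since $\dim_{MC}$ of a $2n$-dimensional universal cover is at most $2n$ (take $K$ to be a lift of a triangulation and $f$ the identity), it suffices to prove the lower bounds $\dim_{MC}\wt N\ge 2n$ and $\dim_{MC}\wt M\ge 2n$. I would deduce the first from the essentiality of $N$ and amenability of $\pi_1(N)$, and then transfer it to $M$ through the birational morphism $\pi\colon M\to N$.

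\emph{Step 1 (rational essentiality).} By Lemma~\ref{lem: essential}(1), take $\gamma\in H^2(B\pi_1(N);\R)$ with $f^*\gamma=[\omega]$ for a classifying map $f\colon N\to B\pi_1(N)$. Then $\langle\gamma^n,f_*[N]_\R\rangle=\int_N\omega^n\ne0$, so $f_*[N]_\R\ne0$.

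For $M$, note that a birational morphism between smooth projective varieties induces an isomorphism $\pi_*$ on fundamental groups, since the exceptional fibers are rationally connected; this is a standard fact. Hence $f\circ\pi$ classifies $\wt M$. Since $\pi$ has degree one, $\pi_*[M]=[N]$, and therefore $(f\circ\pi)_*[M]_\R\ne0$. Thus $M$ is also rationally essential, with amenable fundamental group $\pi_1(M)\cong\pi_1(N)$. I expect this step to be routine.

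\emph{Step 2 (lower bound).} This is the main obstacle. I would invoke Dranishnikov's theorem from~\cite{Dr11a} (or~\cite{Dr13}): for a closed rationally essential $m$-manifold with amenable fundamental group, $\dim_{MC}$ of the universal cover equals $m$.

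The idea behind that theorem is as follows. Suppose there were a proper, uniformly cobounded Lipschitz map $\wt M\to K$ with $\dim K<2n$. Using Følner sets of the amenable group, one builds large compact regions of $\wt M$ whose boundary is small compared with their volume. A rationally essential class forces the fundamental class to remain nontrivial in $\ell^\infty$- or uniformly finite homology (Block--Weinberger), which cannot happen if the Lipschitz map factors through lower-dimensional $K$. Amenability is what makes the uniformly finite homology class of $\wt M$ nonzero; the Lipschitz condition lets the map induce maps on uniformly finite homology.

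If the cited theorem is stated only for manifolds with $f_*[M]\ne0$ integrally, I would pass to rational coefficients: rational non-vanishing implies integral non-vanishing of $f_*[M]$, which is enough.

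Applying Step 2 to both $N$ and $M$ gives $\dim_{MC}\wt N=\dim_{MC}\wt M=2n$.
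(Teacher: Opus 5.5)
Your proof is correct and follows the paper's argument. Both get essentiality of $N$ from Lemma~\ref{lem: essential}, transfer it to $M$ via birational invariance of $\pi_1$ and $\pi_*[M]_\R=[N]_\R$, and then apply Dranishnikov's theorem that universal covers of essential manifolds with amenable fundamental group have maximal $\dim_{MC}$; note that the paper cites this theorem as \cite[Theorem~7.2]{Dr11b}, not \cite{Dr11a} or \cite{Dr13}.
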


\begin{proof}
Since $N$ is symplectically aspherical, it is essential (\emph{cf}. Lemma~\ref{lem: essential}). Note that the fundamental group is a birational invariant, so $\pi_1(M)$ is amenable as well, and a birational morphism $\pi\colon M\to N$ maps the $\R$-fundamental class of $M$ to that of $N$. It follows that $M$ is an essential manifold. Thus, we deduce from~\cite[Theorem 7.2]{Dr11b} that $\dim_{MC}\wt M=\dim_{MC}\wt N=2n$, since the universal cover of an essential manifold with amenable fundamental group has maximum $\dim_{MC}$. 
\end{proof}

\section{Total K\"ahler asphericity and real spherical homology}\label{sec: all aspherical}

In this section, we examine the implications for the topology of the underlying smooth manifold when \emph{all} K\"ahler forms in one of its K\"ahler cones are aspherical. When $M$ is \emph{totally symplectically aspherical} (i.e., all symplectic forms on it are aspherical), then $\Pi(M)$ vanishes (i.e., $M$ cannot have any non-trivial real spherical homology classes). We now study such constraints on the topology of $M$ when $M$ is merely \emph{totally K\"ahler aspherical} (i.e., has a K\"ahler cone in which every K\"ahler form is aspherical). It turns out that the vector space $\Pi(M)$ is again constrained similarly (depending on the complex structure), and in fact, it sometimes vanishes.

\begin{prop}\label{prop: all aspherical}
Let $M$ be a closed K\"ahler manifold with an integrable almost complex structure $J$. If every K\"ahler form on $M$ compatible with $J$ is aspherical, then the dual of $\Pi(M)$ is a subspace of the real Dolbeault group $H_J^{(2,0),(0,2)}(M)_{\R}$, i.e., 
\[
\textup{dual}(\Pi(M))\subset H_J^{(2,0),(0,2)}(M)_{\R}.
\]
\end{prop}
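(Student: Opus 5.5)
Since the Kähler cone $\mathcal K_J\subset H^{1,1}_J(M)_\R$ is open, I would use an openness argument. I will read ``$\textup{dual}(\Pi(M))$'' as the annihilator $\Pi(M)^\perp\subset H^2(M;\R)$, i.e.\ the classes that pair trivially with every real spherical homology class. Hodge theory on the compact Kähler manifold $(M,J)$ gives the decomposition
\[
H^2(M;\R)=H^{1,1}_J(M)_\R\oplus H_J^{(2,0),(0,2)}(M)_\R ,
\]
where the second summand consists of real parts of $(2,0)$-classes. So the claim should be read as: every class in $H^{1,1}_J(M)_\R$ annihilates $\Pi(M)$. Equivalently, $\Pi(M)$ pairs trivially with the whole $(1,1)$-part, and the only directions in $H^2(M;\R)$ that can detect $\Pi(M)$ lie in $H_J^{(2,0),(0,2)}(M)_\R$.

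The first step is to show that $H^{1,1}_J(M)_\R\subset\Pi(M)^\perp$. Take any $\alpha\in H^{1,1}_J(M)_\R$ and fix a Kähler form $\omega$ compatible with $J$. For $|t|$ small, the class $[\omega]+t\alpha$ is still a Kähler class. To see this, pick a real closed $(1,1)$-form $\eta$ representing $\alpha$; then $\omega+t\eta$ is a closed real $(1,1)$-form that is positive definite for small $|t|$, because positivity is an open condition and $M$ is compact. By hypothesis both $\omega$ and $\omega+t\eta$ are aspherical, so for every $a\in\Pi(M)$ we get
\[
t\langle\alpha,a\rangle=\langle[\omega+t\eta],a\rangle-\langle[\omega],a\rangle=0 .
\]
Hence $\langle\alpha,a\rangle=0$. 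I would note that this step only needs real $(1,1)$-classes, not integral ones, so no projectivity is required.

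The second step is to translate this into the stated inclusion using the pairing $H^2(M;\R)\times H_2(M;\R)\to\R$. The functionals on $H_2(M;\R)$ that see $\Pi(M)$ are those whose restriction to $\Pi(M)$ is nonzero. The first step shows that every such restriction factors through the projection onto $H_J^{(2,0),(0,2)}(M)_\R$. Put differently, restriction to $\Pi(M)$ defines a surjection from $H^2(M;\R)$ onto $\Pi(M)^*$, and this map kills $H^{1,1}_J(M)_\R$. It therefore induces a surjection from $H_J^{(2,0),(0,2)}(M)_\R$ onto $\Pi(M)^*$. Choosing a splitting identifies $\Pi(M)^*$ with a subspace of $H_J^{(2,0),(0,2)}(M)_\R$, which is the stated inclusion.

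The main obstacle is pinning down precisely what ``dual'' means in the statement, so that the inclusion is literally true and not only true up to this identification. If it means the Poincaré dual of $\Pi(M)$, namely the subspace $PD(\Pi(M))\subset H^{2n-2}(M;\R)$, I would instead use the Hard Lefschetz isomorphism $L^{n-2}$ with respect to $\omega$ to move to $H^2(M;\R)$. I would then check that the orthogonality relations above put $\Pi(M)$ into the $(2,0)+(0,2)$ part; for this I would use the fact that the Hodge–Riemann form is definite on primitive pieces and pairs $H^{1,1}$ orthogonally to $H^{2,0}\oplus H^{0,2}$. In either reading, the analytic input is just the openness of the Kähler cone in $H^{1,1}_J(M)_\R$.
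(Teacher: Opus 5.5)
Your argument is correct and is essentially the paper's. Both perturb an aspherical $J$-compatible K\"ahler form $\omega$ by a small multiple of a real $(1,1)$-class and apply the hypothesis to the resulting K\"ahler form. You get $\omega+t\eta$ K\"ahler directly from a closed real $(1,1)$-representative $\eta$ and openness of positivity. The paper instead perturbs by an arbitrary closed representative, which is only $J$-tame, and then invokes the Li--Zhang relation $K_J^c=K_J^t\cap H^{1,1}_J(M)_\R$ to obtain a compatible K\"ahler form in that class.

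One slip to fix: you cannot read $\textup{dual}(\Pi(M))$ as the annihilator $\Pi(M)^\perp$, since then the inclusion would be false ($[\omega]$ is a nonzero element of $\Pi(M)^\perp\cap H^{1,1}_J(M)_\R$). What you actually prove, $H^{1,1}_J(M)_\R\subset\Pi(M)^\perp$ together with the splitting in your second step, is the correct formulation, and it is exactly what the paper's contradiction argument establishes.
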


\begin{proof}
    We will utilize the interaction between the cones of $J$-tame and $J$-compatible symplectic forms in this proof. They are defined respectively as
    \[
    K_J^t:=\left\{[\omega]\in H^2(M;\R)\, \mid\,\omega(v,Jv)>0 \text{ for all }v\ne 0\right\}, \ \text{ and},
    \]
    \[
K_J^c:=\left\{[\omega]\in K_J^t\, \mid\,\omega(Ju,Jv)=\omega(u,v) \text{ for all }u,v \right\},
\]
see~\cite{LZ} (and also~\cite{DLZ}). Indeed, $K_J^c$ is open in $H^{1,1}_J(M)_\R$. Suppose that the dual of $\Pi(M)$ is \emph{not} contained in $H_J^{(2,0),(0,2)}(M)_{\R}$. Then there exists a closed $2$-form $\sigma$ on $M$ such that 
    \[
    [\sigma]\in \text{dual}(\Pi(M))\setminus H_J^{(2,0),(0,2)}(M)_{\R}.
    \]
    Since $[\sigma]\ne 0$, we can then use a basis of the finite dimensional vector space
    \[
    H^2(M;\R)\cong H^{1,1}_J(M)_\R\ \oplus\ H_J^{(2,0),(0,2)}(M)_{\R}
    \]
    to write $[\sigma]=[\sigma_1]+[\sigma_2]$, where $[\sigma_2]\in H_J^{(2,0),(0,2)}(M)_{\R}$ and the expression of 
    \[
    0\ne[\sigma_1]\in \text{dual}(\Pi(M))\ \cap\ H^{1,1}_J(M)_\R
    \]
    does not contain any basis elements of $H_J^{(2,0),(0,2)}(M)_{\R}$. Of course,
there exists $x\in \Pi(M)$ such that $\langle[\sigma_1],x\rangle\ne 0$. In particular, $\sigma_1$ is not an aspherical closed $2$-form. If there is \emph{no} aspherical $J$-K\"ahler form on $M$, then there is nothing to prove, and we have already arrived at a contradiction. So, assume that $\omega$ is a compatible aspherical K\"ahler form. For $\lambda>0$ small enough, $\gamma=\omega+\lambda\sigma_1$ is a compatible symplectic form that is not aspherical. Clearly, $[\gamma]=[\omega]+\lambda[\sigma_1]\in K_J^t$. Since $\omega$ is $J$-compatible K\"ahler, we have $[\omega]\in K_J^c$ and so   
    \[
    K_J^t=K_J^c+ H_J^{(2,0),(0,2)}(M)_{\R} \quad \text{and} \quad K_J^c=K_J^t \cap H_J^{1,1}(M)_\R
    \]
    by~\cite{LZ}. Since
    $[\sigma_1]\in H_J^{1,1}(M)_\R$,
    this implies that $[\gamma]\in K_J^c$. Therefore, there is a $J$-compatible K\"ahler form, say $\gamma'$, in the class $[\gamma]$. Note that $\gamma'$ is not aspherical because $\langle[\gamma'],x\rangle = \langle[\gamma],x\rangle \ne 0$.  
%
But this contradicts the hypothesis that every $J$-compatible K\"ahler form on $M$ is aspherical.
\end{proof}

\begin{cor}\label{cor: iff}
Let $(M,J)$ be a closed K\"ahler manifold such that $H^{(2,0),(0,2)}_J(M)_\R$ vanishes. Then $\Pi(M)=0$ if and only if every $J$-compatible K\"ahler form on $M$ is aspherical.   
\end{cor}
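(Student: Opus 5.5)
The corollary follows from Proposition~\ref{prop: all aspherical} together with the elementary observation, recalled in Section~\ref{sec: prelim}, that $\Pi(M)=0$ forces every closed $2$-form to be aspherical. I would prove the two implications separately.

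\emph{($\Rightarrow$)} Suppose $\Pi(M)=0$. A $J$-compatible K\"ahler form $\omega$ is aspherical exactly when $\langle[\omega],\Pi(M)\rangle=0$, and this pairing is automatically zero when $\Pi(M)$ is trivial. So every $J$-compatible K\"ahler form on $M$ is aspherical. This direction does not use the hypothesis on $H^{(2,0),(0,2)}_J(M)_\R$.

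\emph{($\Leftarrow$)} Suppose every $J$-compatible K\"ahler form on $M$ is aspherical. Proposition~\ref{prop: all aspherical} gives
\[
\textup{dual}(\Pi(M))\subset H_J^{(2,0),(0,2)}(M)_{\R}=0.
\]
Here $\textup{dual}(\Pi(M))$ is the subspace of $H^2(M;\R)$ dual to $\Pi(M)\subset H_2(M;\R)$ under the perfect Kronecker pairing over $\R$. In the proof of Proposition~\ref{prop: all aspherical} it is used as the set of classes that detect some element of $\Pi(M)$, i.e.\ classes pairing non-trivially with $\Pi(M)$ modulo the annihilator. If $x\in\Pi(M)$ is non-zero, then by non-degeneracy of the pairing there is a class $[\sigma]$ with $\langle[\sigma],x\rangle\ne 0$. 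Running the argument of Proposition~\ref{prop: all aspherical} on this class, its $H^{1,1}_J$-component must pair non-trivially with $x$, because the other component lies in $H_J^{(2,0),(0,2)}(M)_{\R}=0$. The proposition's argument then produces a non-aspherical $J$-compatible K\"ahler form, which contradicts the hypothesis. Hence $\Pi(M)=0$.

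One degenerate case needs attention. That argument perturbs an existing aspherical K\"ahler form $\omega$, so it needs at least one $J$-compatible K\"ahler form to exist. This holds because $(M,J)$ is K\"ahler, and by hypothesis that form is aspherical. So there is no gap.

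The only point needing care is the precise meaning of ``dual'' in Proposition~\ref{prop: all aspherical}. The cleanest route is to use the dimension count directly. The vanishing of $H^{(2,0),(0,2)}_J(M)_\R$ means $H^{1,1}_J(M)_\R=H^2(M;\R)$, so the K\"ahler cone $K_J^c$ is open in all of $H^2(M;\R)$. The aspherical classes form the linear subspace $\textup{Ann}(\Pi(M))$. If every K\"ahler class is aspherical, this subspace contains a non-empty open set, so it is all of $H^2(M;\R)$. Non-degeneracy of the pairing then gives $\Pi(M)=0$. I would present this short argument as the main step, mentioning Proposition~\ref{prop: all aspherical} as its conceptual source.
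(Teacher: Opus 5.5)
Your proposal is correct, and your main argument takes a somewhat different route from the paper's. The forward direction is exactly the paper's.

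For the converse, the paper simply invokes Proposition~\ref{prop: all aspherical}. That proof perturbs an aspherical K\"ahler form $\omega$ by a closed form $\sigma_1$ whose class lies in $H^{1,1}_J(M)_\R$ and pairs non-trivially with some element of $\Pi(M)$. It then uses the Li--Zhang relations $K_J^t=K_J^c+H_J^{(2,0),(0,2)}(M)_\R$ and $K_J^c=K_J^t\cap H^{1,1}_J(M)_\R$ to obtain a $J$-compatible K\"ahler form in the class $[\omega+\lambda\sigma_1]$.

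Your direct argument bypasses this. Under the hypothesis, $H^{1,1}_J(M)_\R=H^2(M;\R)$, so the K\"ahler cone is a non-empty open subset of $H^2(M;\R)$ (non-empty because $(M,J)$ is K\"ahler). Every K\"ahler class lies in the linear subspace $\mathrm{Ann}(\Pi(M))$, which forces $\mathrm{Ann}(\Pi(M))=H^2(M;\R)$. Since the Kronecker pairing over $\R$ is perfect, $\Pi(M)=0$.

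This needs only the Hodge decomposition and the openness of the K\"ahler cone in $H^{1,1}_J(M)_\R$, which the paper also asserts. It avoids both the tame-cone results and the informal notion $\mathrm{dual}(\Pi(M))$, whose meaning you had to reconstruct. In exchange, the paper's route gets the corollary in one line from a statement that constrains $\Pi(M)$ even when $H_J^{(2,0),(0,2)}(M)_\R\neq 0$.

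Your intermediate paragraph re-running the proposition's argument on a class detecting $x$ is also valid. It becomes redundant once you present the direct argument as the main step.
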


\begin{proof}
    The forward direction is obvious (\emph{cf}. Section~\ref{sec: prelim}). For the backward direction, we simply apply Proposition~\ref{prop: all aspherical}.
\end{proof}

The hypothesis of Corollary~\ref{cor: iff} is satisfied if, for instance, $M$ is a K\"ahler surface with $b_2^+(M)=1$, see~\cite{LZ}.

For K\"ahler surfaces, we also get the following result.

\begin{cor}\label{cor: dim 4 all aspherical}
Let $M$ be a closed K\"ahler surface with an integrable almost complex structure $J$. If every $J$-compatible K\"ahler form on $M$ is aspherical, then real spherical homology classes of $M$ cannot be represented by any symplectic $2$-sphere embedded with respect to any symplectic structure on $M$.
\end{cor}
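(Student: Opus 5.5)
By Proposition~\ref{prop: all aspherical}, the hypothesis forces $\textup{dual}(\Pi(M))\subset H_J^{(2,0),(0,2)}(M)_{\R}$. Concretely, this means that every class $[\sigma]\in H^2(M;\R)$ which pairs non-trivially with some element of $\Pi(M)$ has a non-zero $(2,0)+(0,2)$ part. Dually, every class in $H^{1,1}_J(M)_\R$ annihilates $\Pi(M)$. Our aim is to use this, in real dimension $4$, to show that any $x\in\Pi(M)$ has self-intersection $x\cdot x\le 0$. A class with $x\cdot x\le 0$ cannot be represented by an embedded symplectic sphere in the relevant cases.

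First we identify $\Pi(M)$ inside $H_2(M;\R)\cong H^2(M;\R)$ via Poincar\'e duality. Let $\mathrm{PD}(x)$ be the dual of $x\in\Pi(M)$. For every $\alpha\in H^{1,1}_J(M)_\R$ we have $\langle\alpha,x\rangle=\int_M\alpha\wedge \mathrm{PD}(x)=0$. The decomposition $H^2(M;\R)=H^{1,1}_J(M)_\R\oplus H_J^{(2,0),(0,2)}(M)_\R$ is orthogonal for the intersection pairing on a K\"ahler surface, because a $(1,1)$-form wedged with a $(2,0)$- or $(0,2)$-form has zero top-degree component. The intersection pairing is nondegenerate, so it is also nondegenerate on each summand. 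Hence the annihilator of $H^{1,1}_J(M)_\R$ is exactly $H_J^{(2,0),(0,2)}(M)_\R$, and so $\mathrm{PD}(x)\in H_J^{(2,0),(0,2)}(M)_\R$. By the Hodge index theorem, the intersection form is positive definite on $H_J^{(2,0),(0,2)}(M)_\R$, which has dimension $2p_g$. Positive definiteness there alone therefore gives $x\cdot x\ge 0$, with equality only if $x=0$, and we need to say more.

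Now suppose $S$ is an embedded $\omega'$-symplectic sphere, for some symplectic form $\omega'$ on $M$, with $0\ne[S]\in\Pi(M)$. Since $S$ is a sphere, $[S]$ is automatically spherical, and $[S]\cdot[S]>0$ by the previous step. The plan is to reach a contradiction through the structure theory of symplectic $4$-manifolds. By McDuff's theorem, a symplectic $4$-manifold containing an embedded symplectic sphere of non-negative self-intersection is rational or ruled. Both types have $b^+_2=1$ and are K\"ahler surfaces with $p_g=0$. For our complex surface $M$ this gives $H_J^{(2,0),(0,2)}(M)_\R=0$: $b^+_2=1$ forces $p_g=0$, because $b^+_2=2p_g+1$ on a K\"ahler surface. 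Then $\Pi(M)=0$, contradicting $[S]\ne0$. If the sphere has negative square, this contradicts the positive definiteness from the previous step unless $[S]=0$, which is impossible since $\omega'$ integrates positively on $S$.

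The main obstacle is the step invoking McDuff's classification. One must check that a rational or ruled symplectic structure, which is diffeomorphism-invariant data, forces $b^+_2=1$ for the underlying manifold, so that the Hodge numbers of the given $J$ must have $p_g=0$. This should follow from $b^+_2=2p_g+1$ together with the topological invariance of $b^+_2$.
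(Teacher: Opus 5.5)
Your proof is correct. Through the application of McDuff's theorem it coincides with the paper's: Proposition~\ref{prop: all aspherical}, then the Hodge index theorem to get $[S]\cdot[S]>0$, then McDuff's theorem to conclude that $M$ is rational or ruled.

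Your orthogonality argument makes explicit a step the paper passes over quickly. Since $H^{1,1}_J(M)_\R$ and $H_J^{(2,0),(0,2)}(M)_\R$ are orthogonal for the intersection form, the annihilator of the first is exactly the second. This shows that the \emph{Poincar\'e} dual of a class in $\Pi(M)$, and not merely some Hom dual, lies in $H_J^{(2,0),(0,2)}(M)_\R$, which is what the Hodge index theorem needs.

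The only real difference is the final contradiction.
\begin{itemize}
\item The paper uses the fundamental group. Since $J$-compatible K\"ahler forms exist and all are aspherical, $M$ is symplectically aspherical. Hence $\cd_\R(\pi_1(M))\ge 4$ by~\eqref{eq: cd}, while rational and ruled surfaces have trivial or surface fundamental group.
\item You stay inside Hodge theory. Rational and ruled surfaces have $b_2^+=1$, which forces $p_g=0$, so $H_J^{(2,0),(0,2)}(M)_\R=0$ and hence $\Pi(M)=0$.
\end{itemize}
Put differently, your route shows that $\Pi(M)\ne 0$ already forces $b_2^+(M)=2p_g+1\ge 3$. That rules out rational and ruled surfaces with no input about $\pi_1$. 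The paper's route instead uses only the topological consequence~\eqref{eq: cd} of symplectic asphericity.

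Two small remarks.

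First, the ``main obstacle'' you flag is not one. McDuff's conclusion holds up to symplectomorphism, and $b_2^+$ is an invariant of the oriented homotopy type. What does need to be said is that $\omega'$ induces the complex orientation of $J$. This is used twice: to turn the Hodge-index sign into McDuff's hypothesis $[S]\cdot[S]\ge 0$, and to compare $b_2^+$ with $2p_g+1$. The paper's proof makes the same tacit assumption.

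Second, delete your opening statement of aim, namely showing $x\cdot x\le 0$. It has the sign backwards and is contradicted by your own second paragraph.
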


\begin{proof}
Suppose there is a real spherical homology class $0\ne \alpha\in\Pi(M)$ represented by a symplectic $2$-sphere embedded with respect to some symplectic structure on $M$. Let $\beta\in\text{dual}(\Pi(M))$ be its Hom dual. Since every $J$-compatible K\"ahler form on $M$ is aspherical, Proposition~\ref{prop: all aspherical} gives $\beta\in H_J^{(2,0),(0,2)}(M)_\R$. Then the Hodge Index Theorem (see, for instance,~\cite[Chapter~IV]{bpv}) implies that $\beta^2>0$. Thus, it follows from~\cite{mcd} that $M$ is either rational or ruled. But as explained in the proof of Theorem~\ref{th: 2 and 4}, neither of these is possible for a symplectically aspherical $4$-manifold due to fundamental group reasons. Hence, we get a contradiction. 
\end{proof}

\begin{remark}
    It is unclear if the hypothesis in Corollary~\ref{cor: dim 4 all aspherical} can be dropped or relaxed. Indeed, while most spherical homology classes in $M$ can be represented by symplectically \emph{immersed} $2$-spheres in view of the $h$-principle (see~\cite[Section~2]{TJLi1} for a sketch), our proof uses results of~\cite{mcd} which apply to symplectically \emph{embedded} $2$-spheres in $M$. In general, there are several obstructions to being represented by symplectically embedded surfaces, see~\cite[Section~3]{TJLi1}.
\end{remark}

In any case, Corollary~\ref{cor: dim 4 all aspherical} yields the following, which does not involve any assumptions concerning $J$-\emph{homolomorphic} embeddings or immersions in $M$.

\begin{cor}\label{cor: kahler surface}
    If there exists a symplectic form $\omega$ on a closed K\"ahler surface $(M,J)$ and a symplectic embedding $f\colon (S^2,f^*\omega)\to (M,\omega)$, then there exists a $J$-compatible K\"ahler form on $M$ that is not aspherical. 
\end{cor}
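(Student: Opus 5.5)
This follows from Corollary~\ref{cor: dim 4 all aspherical} by contraposition. We argue by contradiction: assume every $J$-compatible K\"ahler form on $(M,J)$ is aspherical. The goal is then to exhibit a real spherical homology class of $M$ that is represented by a symplectically embedded $2$-sphere. Such a sphere is forbidden by Corollary~\ref{cor: dim 4 all aspherical}.

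The natural candidate is $\alpha:=f_*[S^2]\in H_2(M;\R)$. It lies in $\Pi(M)$ because it is the image of the class of $f$ in $\pi_2(M)$ under the real Hurewicz map. It is represented by the embedded sphere $f(S^2)$, which is symplectic with respect to the symplectic structure $\omega$ on $M$, since $f^*\omega$ is a symplectic (area) form on $S^2$. The one point to check is that $\alpha\neq 0$. This follows because
\[
\langle[\omega],\alpha\rangle=\int_{S^2}f^*\omega>0,
\]
as $f^*\omega$ is a positive area form for the orientation induced by $\omega$. So $\alpha$ is a non-zero real spherical class represented by a symplectically embedded $2$-sphere. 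This is exactly the situation ruled out by Corollary~\ref{cor: dim 4 all aspherical} under our assumption, giving the contradiction. Hence some $J$-compatible K\"ahler form on $M$ fails to be aspherical.

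There is no real obstacle here. The only subtlety is the non-vanishing of $\alpha$, which the positivity of the symplectic area handles directly. Note also that the symplectic structure $\omega$ need not be related to $J$. This is allowed because Corollary~\ref{cor: dim 4 all aspherical} permits an arbitrary symplectic structure on $M$.
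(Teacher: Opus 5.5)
Your proposal is correct and matches the paper, which presents this statement as an immediate consequence (the contrapositive) of Corollary~\ref{cor: dim 4 all aspherical} without further argument. Your check that $f_*[S^2]\neq 0$ via $\int_{S^2}f^*\omega>0$ makes explicit a small point the paper leaves implicit, namely that the spherical class is non-zero as the proof of Corollary~\ref{cor: dim 4 all aspherical} requires.
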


\begin{ex}
    Let $M=SP^2(M_g)$ for $g\ge 3$ and $J$ be the complex structure induced on it by a generic complex structure on $M_g$. Let $\omega$ be a symplectic form on $SP^2(M_g)$ obtained using the hyperelliptic complex structure on $M_g$. Then $(SP^2(M_g), \omega)$ has a symplectic sphere (in fact, a rational curve), so Corollary~\ref{cor: kahler surface} implies the existence of a non-aspherical $J$-compatible K\"ahler form on $SP^2(M_g)$. In Section~\ref{sec: non-aspherical}, we will verify this fact by proving the uniqueness of aspherical $J$-compatible K\"ahler forms on $SP^n(M_g)$ for all $n\ge 2$ and $g\ge 2n-1$.
\end{ex}

\section{Uniqueness of aspherical K\"ahler forms on symmetric products}\label{sec: non-aspherical}
In this section, we continue our investigation of K\"ahler cones corresponding to aspherical K\"ahler forms. Our goal now is to describe a family of examples of closed symplectically aspherical smooth projective varieties $(M,J,\omega)$ on each of which there are infinitely many $J$-compatible K\"ahler forms $\omega'$ that are \emph{not} aspherical. In view of Lemma~\ref{lem: exact}, the results of this section show that if $\pi\colon\wt M\to M$ is the universal Riemannian cover and $\pi^*\omega$ is an exact form on $\wt M$, then $\pi^*\omega'$ is not guaranteed to be exact on $\wt M$ for \emph{any} choice of a perturbation $\omega'$ of $\omega$ inside the corresponding $J$-K\"ahler cone on $M$.

Consider a complex curve $M_g$ that is generic in its moduli space $\M_g$, and consider its $n$-th symmetric product $SP^n(M_g)$ for $n\le\lfloor\tfrac{g+1}{2}\rfloor$. 
As explained in~\cite[Theorem~4.1]{DCDJ}, the $n$-th Abel--Jacobi map 
\[
\mu_n\colon SP^n(M_g)\to \C^g/\Lambda =:T^{2g}
\]
is a holomorphic embedding, and so the pullback $\mu_n^*\omega_0$, where $\omega_0$ is the canonical aspherical K\"ahler form on $T^{2g}$, is an aspherical K\"ahler form on $SP^n(M_g)$ (\emph{cf}. Lemma~\ref{lem: sak properties}). We write $\omega:=\mu^*_n\omega_0$, and we use $J$ to denote the corresponding integrable almost complex structure on $SP^n(M_g)$.

\begin{thm}\label{th: unique Kaehler}
Consider the closed symplectically aspherical K\"ahler $2n$-manifold $(SP^n(M_g),J,\omega)$ described above. A $J$-compatible K\"ahler form $\omega'$ on $SP^n(M_g)$ is aspherical if and only if it is cohomologous to a non-zero scalar multiple of $\omega$.
\end{thm}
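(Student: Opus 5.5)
The plan is to identify the aspherical classes on $SP^n(M_g)$ cohomologically, using Macdonald's description of $H^*(SP^n(M_g);\R)$, and then intersect that space with the $J$-K\"ahler cone using the N\'eron--Severi group of a generic $SP^n(M_g)$. Throughout I assume $n\ge 2$, which is implicit in the statement, since for $n=1$ every K\"ahler form on a curve of genus $g\ge1$ is aspherical.

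\textbf{Step 1 (the easy direction).} If $[\omega']=c[\omega]$ with $c\ne 0$, then $\langle[\omega'],x\rangle=c\langle[\omega],x\rangle=0$ for every $x\in\Pi(SP^n(M_g))$. Asphericity depends only on the cohomology class, so $\omega'$ is aspherical. Note that $c>0$ is forced, because $\omega'$ is K\"ahler.

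\textbf{Step 2 (the space of aspherical classes).} By Macdonald, for $2\le n\le g-1$ we have
\[
H^2(SP^n(M_g);\R)=\Lambda^2H^1(M_g;\R)\oplus\R\,\eta,
\]
where $\eta$ is Poincar\'e dual to $SP^{n-1}(M_g)+\text{pt}$. The Abel--Jacobi map $\mu_n$ induces an isomorphism on $\pi_1$ onto $\Z^{2g}$, and $T^{2g}=B\Z^{2g}$, so $\mu_n$ is a classifying map. Hence $\mu_n^*H^2(T^{2g};\R)=\Lambda^2H^1$ is the summand generated by the $\sigma_i\sigma_j$. By Lemma~\ref{lem: essential}(1), every aspherical class lies in $\Lambda^2H^1$. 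Conversely, every class in $\Lambda^2H^1$ vanishes on $\Pi$, because $\pi_2(T^{2g})=0$. Therefore
\[
\mathrm{dual}\big(\Pi(SP^n(M_g))\big)^{\perp}=\Lambda^2H^1,
\]
that is, a class is aspherical if and only if its $\eta$-coefficient vanishes. In these terms $[\omega]=\mu_n^*[\omega_0]$ is the class $\theta$ of the pulled-back theta divisor.

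\textbf{Step 3 (intersecting with the K\"ahler cone).} A $J$-K\"ahler class lies in $H^{1,1}_J(SP^n(M_g))_\R$. So I must show that
\[
\Lambda^2H^1\cap\mathcal K_J\subset \R_{>0}\,\theta .
\]
For this I would use the classical fact (Pirola, Kouvidakis) that for generic $M_g$ one has $NS(SP^n(M_g))=\Z\eta\oplus\Z\theta$, together with $NS\cap\Lambda^2H^1=\Z\theta$. The latter holds because the Jacobian of a generic curve has Picard number $1$.

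\textbf{Main obstacle.} The difficulty is passing from integral classes to arbitrary \emph{real} K\"ahler classes. The N\'eron--Severi computation controls only $NS\otimes\R$, whereas $H^{1,1}_J\cap\Lambda^2H^1$ can be the full pullback of the $g^2$-dimensional space $H^{1,1}(T^{2g})_\R$. So the key step is to show that the aspherical K\"ahler classes are confined to $NS\otimes\R$. I would first try the following: use the K\"ahler condition together with Macdonald's relations for the intersection numbers $\int\alpha^k\theta^{n-k}$ to force any Kähler class in $\Lambda^2H^1$ into $NS\otimes\R$, and then apply the rank computation of Step 3. If that fails, I expect the statement must be read with $\omega'$ ranging over forms whose class lies in $NS\otimes\R$ (for instance, polarizations). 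In that case the argument closes directly: writing $[\omega']=a\eta+b\theta$, asphericity forces $a=0$, which gives the claim.
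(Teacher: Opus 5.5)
Your diagnosis of the ``main obstacle'' is correct. Your first proposed remedy cannot work, because for arbitrary real K\"ahler classes the statement is false.

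Take any flat K\"ahler form $\omega_1$ on $T^{2g}$ with $[\omega_1]\notin\R[\omega_0]$. For instance, let $\omega_1=\omega_0+\varepsilon\tau$, where $\tau$ is a constant real $(1,1)$-form not proportional to $\omega_0$ and $\varepsilon>0$ is small.
\begin{itemize}
\item Since $\mu_n$ is a holomorphic embedding, $\mu_n^*\omega_1$ is a $J$-compatible K\"ahler form on $SP^n(M_g)$.
\item It is aspherical, because every map $S^2\to T^{2g}$ is null-homotopic.
\item Its class $\mu_n^*[\omega_1]$ is not a multiple of $\theta=\mu_n^*[\omega_0]$. This is because $\mu_n^*$ is injective on $H^2(T^{2g};\R)=\Lambda^2H^1$ for $n\ge 2$, which is your own Step~2: Macdonald's relations only begin in degree $n+1\ge 3$.
\end{itemize}
So the aspherical K\"ahler classes contain $\mu_n^*\mathcal K(T^{2g})$. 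This is an open cone in the $g^2$-dimensional space $\Lambda^2H^1\cap H^{1,1}_J$, while $h^{1,1}(SP^n(M_g))=g^2+1$. No argument with intersection numbers can confine these classes to $N^1\otimes\R$, which here is only $2$-dimensional.

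The theorem holds exactly under your fallback reading, i.e.\ for K\"ahler forms whose class lies in $N^1(SP^n(M_g))_\R$ (for example, Hodge forms). That is also all the paper's proof actually establishes. It writes $[\omega']=\alpha c^*+\beta\sum_i a_i^*b_i^*$ with $\alpha,\beta\in\Z$, which presupposes that $[\omega']$ lies in the N\'eron--Severi group. Likewise, Figure~\ref{fig: kahler cone} depicts the $2$-dimensional ample cone, not the K\"ahler cone. You should therefore state the restriction as a hypothesis rather than try to prove it away.

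Under that hypothesis your argument is complete. It differs from the paper's in how it decides which classes $a\,x+b\,\theta$ are aspherical (your $\eta$ is the paper's $x=c^*$).
\begin{itemize}
\item The paper takes the explicit generator $u=c-\sum_i a_i\cdot b_i$ of $\Pi(SP^n(M_g))$ and notes $\langle\theta,u\rangle=0$. It then proves $\langle x,u\rangle=1$ by establishing $PD(a_i\cdot b_i)=x^{n-1}-a_i^*b_i^*x^{n-2}$ against an explicit basis of $H^2$, using Macdonald's relations.
\item You observe that $\mu_n$ is a classifying map. Lemma~\ref{lem: essential}(1) together with $\pi_2(T^{2g})=0$ then identifies the aspherical classes with $\mu_n^*H^2(T^{2g};\R)=\Lambda^2H^1$. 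You then only need $x\notin\Lambda^2H^1$, which is Macdonald's presentation in degree $2$.
\end{itemize}
Your route needs neither a generator of $\Pi$ nor any Poincar\'e-duality computation. It also describes the whole annihilator of $\Pi$ at once, which is exactly what exposes the family of counterexamples above. The paper's route yields the sharper numerical fact $\langle x,u\rangle=1$, which is more than the theorem requires.
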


\begin{proof}
    Our proof uses the structure of the N\'eron--Severi group $N^1(SP^n(M_g))$, so first we recall that. For $p\in M_g$, let $i_{p}\colon SP^{n-1}(M_g)\hookrightarrow SP^n(M_g)$ be the basepoint inclusion $i_p(Q)= p+Q$, where $Q\in SP^{n-1}(M_g)$ is any divisor. The numerical equivalence class of $\text{Im}(i_p)$ is independent of $p$, so we denote it by $x\in N^1(SP^n(M_g))$. By definition, $x$ is the Poincaré dual of $[SP^{n-1}(M_g)]$. Next, let $\Theta$ denote the \emph{theta divisor} on the Jacobian $T^{2g}$, and let $\theta\in N^1(T^{2g})$ be its numerical equivalence class. Let us use the same notation $\theta\in N^1(SP^n(M_g))$ for the numerical equivalence class $\mu_n^*\theta$ in $SP^n(M_g)$. It is well-known (see~\cite{ACGH,Ko}) that the group $N^1(SP^n(M_g))$ is generated by the numerical equivalence classes $x$ and $\theta$.

\begin{figure}[htbp]
    \centering
    \includegraphics[width=0.3\linewidth]{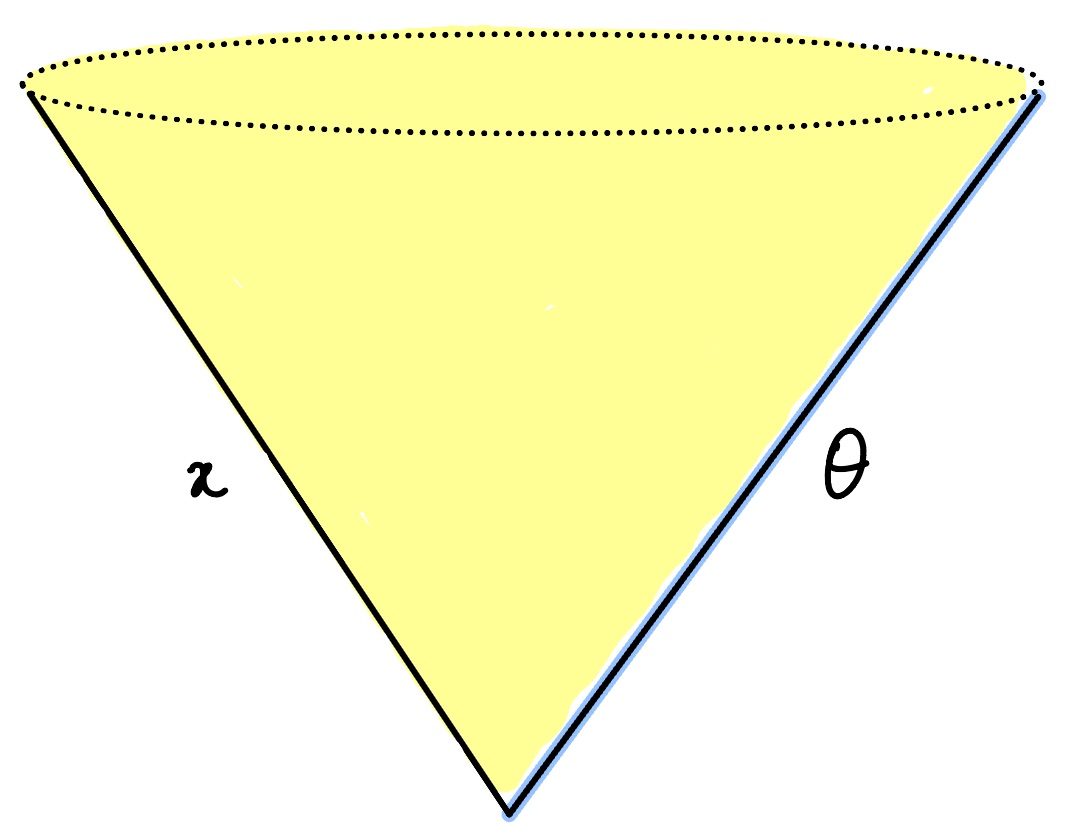}
    \captionsetup{font=footnotesize}
    \caption{ The $J$-K\"ahler cone of $SP^n(M_g)$ spanned by $x$ and $\theta$. The axis of aspherical K\"ahler forms is in blue (on the right), and the portion of non-aspherical K\"ahler forms is in yellow.}
    \label{fig: kahler cone}
\end{figure}

The numerical equivalence class of $\Theta$ in $N^1(T^{2g})$ is the class $\sum_{i=1}^ga_i^*b_i^*$, where $\{a_i,b_i\mid1\le i \le g\}$ is a symplectic basis of $H_1(T^{2g};\R)$, and $*$ in the superscript denotes the respective Hom duals. It follows that 
\[
\theta=\sum_{i=1}^ga_i^*b_i^*\in N^1(SP^n(M_g)),
\]
where we use the same notations $a_i^*$ and $b_i^*$ for the respective images under $\mu_n^*$. Next, take $c=[M_g]\in H_2(SP^n(M_g);\R)$ to be the fundamental class of $M_g$, and its image under $(\mu_n)_*$, and let $c^*$ be its Hom dual. Then we have 
\[
c^*=PD([SP^{n-1}(M_g)])=x. 
\]
We refer the reader to~\cite{ACGH} for more details. Next, recall from~\cite[Sections~3~and~9]{DCDJ} that the real spherical homology class 
\[
u=c-\sum_{i=1}^ga_i\cdot b_i
\]
generates the vector space $\Pi(SP^n(M_g))$. We note that the class of the standard K\"ahler form $\omega_0$ on $T^{2g}$ is $\sum_{i=1}^ga_i^*b_i^*$, so we have $[\omega]=\mu_n^*[\omega_0]=\sum_{i=1}^ga_i^*b_i^*=\theta$ in our notations. That $\omega$ is indeed an aspherical form on $SP^n(M_g)$ is then verified by the fact that $\sum_{i=1}^ga_i^*b_i^*$ evaluates to $0$ on $u$, see~\cite[Proposition~9.7]{DCDJ}. 

Now, let $\omega'$ be an arbitrary $J$-compatible K\"ahler form on $SP^n(M_g)$. By what has been said above, there exist $\alpha,\beta\in\Z$ such that $[\omega']=\alpha c^*+\beta\sum_{i=1}^ga_i^*b_i^*$, and $\omega'$ is aspherical if and only if
\[
0=\langle[\omega'],u\rangle=\alpha\langle c^*,u\rangle+\beta\left\langle\sum_{i=1}^ga_i^*b_i^*,u\right\rangle=\alpha\langle c^*,u\rangle.
\]
To complete our proof, we must show that $\langle c^*,u\rangle\ne 0$, because then, $\omega'$ will be aspherical if and only if $[\omega']=\beta[\omega]=[\beta\omega]$ for the K\"ahler form $\beta\omega$. Letting $PD$ denote the respective Poincar\'e duals, we have that
\[
\langle c^*,u\rangle  = c^*\smile PD(u)=c^*\smile\left(PD(c)-\sum_{i=1}^g PD(a_i\cdot b_i)\right)
\]
\[
=c^*\smile\left((c^*)^{n-1}-\sum_{i=1}^g PD(a_i\cdot b_i)\right)=1-\sum_{i=1}^g\left(c^*\smile PD(a_i\cdot b_i)\right)
\]
by definition. It remains to find the Poincar\'e duals $PD(a_i\cdot b_i)$ for $1\le i \le g$. We claim that $PD(a_i\cdot b_i)=(c^*)^{n-1}-a_i^*b_i^*(c^*)^{n-2}$ for each $i$. If this is the case, then
\[
c^*\smile PD(a_i\cdot b_i)=(c^*)^{n-1}(c^*-a_i^*b_i^*)=0,
\]
where the equalities hold due to Macdonald's relations (see~\cite{Mac} or below) on the algebra $H^*(SP^n(M_g);\R)$, and then this will give the desired $\langle c^*,u\rangle=1$. All that remains now is to prove the claim that $PD(a_i\cdot b_i)=(c^*)^{n-1}-a_i^*b_i^*(c^*)^{n-2}$ for each $i$. To this end, we will make use of Macdonald's relations that
\[
a_{i_1}^*\cdots a_{i_l}^*b_{j_1}^*\cdots b_{j_m}^*(c^*-a_{k_1}^*b_{k_1}^*)\cdots(c^*-a_{k_r}^*b_{k_r}^*)(c^*)^s=0
\]
whenever $l+m+2r+s> n$ for any distinct set of indexes $i_1,\dots,i_l$, $j_1,\dots,j_m$, and $k_1,\dots, k_r$, see~\cite{Mac} (and also~\cite{KS} for additional details).

Fix $i\in\{1,\dots,g\}$. We have from~\cite[Lemma 15]{KS} that $(a_i\cdot b_i)^*=a_i^*b_i^*-c^*$ for the Hom dual of $a_i\cdot b_i$, so it suffices to find a basis of $H^2(SP^n(M_g);\R)$ containing $a_i^*b_i^*-c^*$ such that 
\[
PD(a_i\cdot b_i)(a_i^*b_i^*-c^*)=(c^*)^n=1
\]
and the cup product of $PD(a_i\cdot b_i)$ with any other element of the basis is $0$. We prove that the set of classes $a_k^*a_\ell^*$ and $b_k^*b_\ell^*$ for $k<\ell$, $a_k^*b_{\ell}^*$ for $k\ne \ell$, $a_k^*b_k^*-c^*$ for all $k$, and $c^*$, form such a basis. That this is a basis is standard (see~\cite[Section~3]{DCDJ} and the references therein). Note that because $(a_i^*b_i^*)^2=0$, we have
\[
((c^*)^{n-1}-a_i^*b_i^*(c^*)^{n-2})(a_i^*b_i^*-c^*) = (c^*)^{n-1}a_i^*b_i^* 
\]
\[
= (c^*)^{n} - (c^*)^{n-1}(c^*-a_i^*b_i^*)=(c^*)^n=1.
\]
Next, because $a_j^*b_j^*=-b_j^*a_j^*$ for all $j$, we see for $k < \ell$ that 
\[
((c^*)^{n-1}-a_i^*b_i^*(c^*)^{n-2})(a_k^*a_\ell^*) = (c^*)^{n-1}a_k^*a_\ell^* + (c^*)^{n-2}b_i^*a_i^*a_k^*a_\ell^*. 
\]
The first term above is $0$. If $k=i$, the second is directly $0$. If $k \ne i$, then 
\[
(c^*)^{n-2}b_i^*a_i^*a_k^*a_\ell^* = (c^*)^{n-1}a_k^*a_\ell^*-\left[(c^*)^{n-2}(c^*-b_i^*a_i^*)a_k^*\right]a_\ell^*,
\]
where the first term is $0$ since $k \ne \ell$, and term inside the square bracket is $0$ since $k \ne i$. Similarly, it can be shown that $((c^*)^{n-1}-a_i^*b_i^*(c^*)^{n-2})(b_k^*b_\ell^*)=0$ whenever $k<\ell$. Next, we have for $k\ne i$ that
\[
((c^*)^{n-1}-a_i^*b_i^*(c^*)^{n-2})(a_k^*b_k^*-c^*)= 0 + b_i^*\left[(c^*)^{n-2}a_i^*(a_k^*b_k^*-c^*)\right]=0
\]
because the term in the square bracket is $0$ as before. For any $k\ne \ell$, note that
\[
((c^*)^{n-1}-a_i^*b_i^*(c^*)^{n-2})(a_k^*b_\ell^*)= (c^*)^{n-2}b_i^*a_i^*a_k^*b_\ell^*. 
\]
If $k=i$, the above term is directly $0$. If $k \ne i$, then we again have
\[
(c^*)^{n-2}b_i^*a_i^*a_k^*b_\ell^*=b_i^*[(c^*)^{n-2}a_i^*a_k^*b_{\ell}^*]=0
\]
because $i$, $k$, and $\ell$ are distinct. The equality $((c^*)^{n-1}-a_i^*b_i^*(c^*)^{n-2})c^*=0$ has already been shown. This completes the proof.
\end{proof}

\section*{Acknowledgment}
The authors acknowledge the support of NSF-DMS-2601710 during their travel to the International Conference on Complex and Differential Geometry (ICCDG) at the International Centre for Theoretical Physics (ICTP) in Trieste, Italy, in the period May 25–29, 2026.

LFDC thanks Simone Cecchini for pointing out his work with Francesco Bei and for explaining its relevance to some of the topics addressed in Section~\ref{sec: psc}. He is partially supported by the Simons Foundation grant MPS-TSM-00013711.

AD is grateful to the organizers of the conferences in honor of Sergei Novikov (University of Maryland, May 4-8, 2026) and Claude LeBrun (ICTP, May 25-29, 2026) for the invitation to present this project.

EJ thanks Tedi Draghici for discussions on tame and compatible symplectic cones during his visit to ICTP for the ICCDG. He is grateful to Weiyi Zhang for suggesting corrections to the proof of Proposition~\ref{prop: all aspherical}. He also acknowledges partial travel support from the Clay Mathematics Institute, the Foundation Compositio Mathematica, and the Simons Foundation for his travel to ICCDG.

\end{document}